\def\thm@space@setup{%
  \thm@preskip=7mm plus 2mm minus 1mm
  \thm@postskip=\thm@preskip 
}
\newtheorem{Theorem}{Theorem}[section]
\newtheorem{Lemma}[Theorem]{Lemma}		   
\newtheorem{Corollary}[Theorem]{Corollary}
\theoremstyle{definition}
\newtheorem{Definition}[Theorem]{Definition}
\newtheorem{Example}[Theorem]{Example}
\numberwithin{equation}{section} 
\newcommand{\R}{\mathbb{R}} 
\newcommand{\Z}{\mathbb{Z}} 
\newcommand{\N}{\mathbb{N}} 
\newcommand{\bracket}[1]{\, \lbrack \, #1 \, \rbrack \,}				
\newenvironment{acknowledgements}{\begin{abstract}} {\end{abstract}}   
\mathchardef\ordinarycolon\mathcode`\:						
\newcommand{\meet}{\mathrel{\text{{\scalebox{0.8}{$\wedge$}}}}}			
\newcommand{\join}{\mathrel{\text{{\scalebox{0.8}{$\vee$}}}}}				
\DeclareMathOperator{\rk}{rk}																	
\DeclareMathOperator{\st}{st}
\DeclareMathOperator{\lk}{lk}
\DeclareMathOperator{\cl}{cl}
\DeclareMathOperator{\Cone}{C}
\DeclareMathOperator{\NC}{NC}
\DeclareMathOperator{\K}{K}
\DeclareMathOperator{\colim}{colim}
\DeclareRobustCommand*{\mfaktor}[3][]
{
   { \mathpalette{\mfaktor@impl@}{{#1}{#2}{#3}} }
}
\newcommand*{\mfaktor@impl@}[2]{\mfaktor@impl#1#2}
\newcommand*{\mfaktor@impl}[4]{
   \settoheight{\faktor@zaehlerhoehe}{\ensuremath{#1#2{#3}}}%
   \settoheight{\faktor@nennerhoehe}{\ensuremath{#1#2{#4}}}%
      \raisebox{-0.5\faktor@zaehlerhoehe}{\ensuremath{#1#2{#3}}}%
      \mkern-4mu\diagdown\mkern-5mu%
      \raisebox{0.5\faktor@nennerhoehe}{\ensuremath{#1#2{#4}}}%
}
\begin{document}
  \pagestyle{empty}
  \graphicspath{{Figures/}}

  \begin{titlepage}
{\setstretch{1.0}
    \includegraphics[scale=0.45]{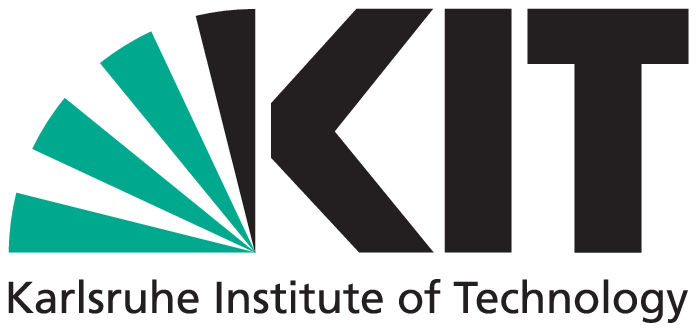}
    \vspace*{2cm} 

 \begin{center} \large 
    
    Master's Thesis
    \vspace*{2cm}

    {\huge On Brady's Classifying Spaces\\
    \vspace*{3mm}for Artin Groups of Finite Type}
    \vspace*{2.5cm}

    Valentin Braun
    \vspace*{1.5cm}

    March 6, 2018
    \vspace*{4.5cm}

    Advisor: JProf. Dr. Petra Schwer\\[1cm]
    Department of Mathematics \\[1cm]
		Karlsruhe Institute of Technology
  \end{center}
  
  }
\end{titlepage}

\pagestyle{plain}
\setcounter{page}1
\pagenumbering{Roman}

{\setstretch{1.7}
\begin{acknowledgements}
I am very grateful to Julia Heller for many hours of interesting discussions and valuable advice I got on my way to understanding the topic and writing the thesis.
She has always been a patient listener to my concerns and provided me with guidance in a kind and dependable way.
I greatly appreciate her help and support I received during the process.
\end{acknowledgements}
}

\newpage

\begin{minipage}{\linewidth}
\begin{abstract}
This thesis takes Brady's construction of $K(\pi,1)$s for the braid groups as a starting point. It is widely known that this construction can – with the right ingredients – be generalized to Artin groups of finite type. Results of Bessis as well as Brady and Watt are used to establish the general construction for Artin groups of finite type. The non-crossing partition lattice in finite Coxeter groups is identified and used to generate the so called poset group. With the help of this poset group, which turns out to be isomorphic to the Artin group, a simplicial complex is constructed on which the poset group acts. It is shown that the complex itself is the universal cover of the $K(\pi,1)$ of the Artin group of finite type and the quotient under the action is the desired $K(\pi,1)$.
\end{abstract}
\vspace*{4cm}
\selectlanguage{ngerman}
\begin{abstract}

Deutscher Titel: \textbf{Über Bradys klassifizierende Räume für Artin-Gruppen endlichen Typs}

Diese Thesis nimmt Bradys Konstruktion von $K(\pi,1)$-Räumen für die Zopfgruppen als Ausgangspunkt. Es ist allgemein bekannt, dass diese Konstruktion mit den richtigen Hilfsmitteln auf die Klasse der Artin-Gruppen endlichen Typs verallgemeinert werden kann. Es werden Ergebnisse von Bessis sowie Brady und Watt benutzt um die Verallgemeinerung auf Artin-Gruppen endlichen Typs möglich zu machen. Nichtkreuzende Partitionen in endlichen Coxetergruppen werden identifiziert und benutzt um die sogenannte Poset-Gruppe zu erzeugen. Diese ist isomorph zur Artin-Gruppe und ist Basis für die Konstruktion eines Simplizialkomplexes auf dem die Gruppe wirkt. Es wird gezeigt, dass dieser Simplizialkomplex die universelle Überlagerung des $K(\pi,1)$ der Artin-Gruppe endlichen Typs ist und dass der Quotientenraum der Gruppenwirkung der gewünschte $K(\pi,1)$ ist.
\end{abstract}
\end{minipage}

\newpage
  \tableofcontents
  
  \newpage
  
  \listoffigures

\newpage

  \pagestyle{headings}

\setcounter{page}1
\pagenumbering{arabic}

\section{Introduction}

The class of Artin groups arises from a generalization of the braid groups, which were introduced by Artin in \cite{ART25}. In this work, Artin proved a certain finite presentation for the braid groups and solved the word problem. Other fundamental work on braid groups includes Garside's \cite{GAR69}, where a new solution to the word problem is given and the conjugacy problem is solved. Brieskorn and Saito \cite{BS72} and Deligne \cite{DEL72} independently generalized the braid groups to what is known today as Artin groups.\\

The aim of this thesis is to describe Brady's construction of $\K(\pi,1)$s for the braid groups from \cite{BRA01} and elaborate on the general case of Artin groups of finite type.

\begin{Definition}
	Let $\pi$ be a group and $X$ a connected topological space. Then $X$ is called an \emph{Eilenberg-MacLane space of type }$\K(\pi,1)$, if
	\begin{alignat*}{2}
		&\pi_1(X) \cong \pi \qquad &\text{ and} \\
		&\pi_n(X) \text{ is trivial} \qquad &\text{ for } n \geq 2.
	\end{alignat*}
\end{Definition}

Instead of Eilenberg-MacLane space of type $\K(\pi,1)$ we will also call such spaces just $\K(\pi,1)$s. In case $\pi$ is a discrete group, $\K(\pi,1)$ spaces are also called \emph{classifying spaces}.\\

Some of the results of \cite{BRA01} are closely related to Birman, Ko, Lee \cite{BKL98}. Krammer described independently from Brady in \cite{KRA00} the construction of the very same $\K(\pi,1)$ for the braid groups. That is why in the literature this complex is sometimes called \emph{Brady-Krammer complex}.

In \cite{BW02} Brady and Watt generalized this construction to Artin groups of type $C_n$ and $D_n$. In the same work they noted that the construction can be generalized to every Artin group of finite type for which one can show that the closed interval $[I, \gamma]$ forms a lattice in $W$ equipped with the reflection order, where $I$ is the identity in the related Coxeter group $W$ and $\gamma$ a Coxeter element. Bessis independently obtained similar results and already established this lattice property for all finite Coxeter groups with a case-by-case proof, that was partly achieved by computer, in \cite{BES03}. Brady and Watt then gave a case-free proof of the lattice property for all finite Coxeter groups in \cite{BW08}.

The main goal of \cite{BES03} was the study of the \emph{dual braid monoid} of Artin groups of finite type. This approach can be understood as a dual theory of the \emph{positive braid monoid}, which was for example studied in \cite{GAR69}. Instead of the positive braid monoid, which comes from the Coxeter group with its standard generating set $(W,S)$, the monoid which is generated by the set of all reflections $T= \set{wsw^{-1} | w \in W, s \in S }$ is considered. Bessis introduced this notion of \emph{dual Coxeter theory}, of which the construction in this thesis also makes use, in \cite{BES03}. The case when $(W,S)$ is of type $A_n$ was considered earlier in \cite{BKL98}.\\

The thesis is structured as follows.

In Section~\ref{sec:def} basic definitions are made. Then, in Section~\ref{sec:lat}, the reflection order on a Coxeter group is defined and we observe the non-crossing partition lattice described by Brady and Watt in \cite{BW08}. We also prove a few lemmas on the structure of the lattice. We then define the poset group for a finite Coxeter group in Section~\ref{Sec:PosetGr} and use a result of Bessis \cite{BES03} to establish an isomorphism to the related Artin group. We establish cancellation properties in the positive semi group and show that it embeds into the poset group. In Section~\ref{sec:complex} we construct a simplicial complex and show that it is the universal cover of the desired $K(\pi,1)$.\\

We describe the combinatorial construction of the Brady-Krammer complex, closely following \cite{BRA01} and give the general approach for Artin groups of finite type. We use the main result of \cite{BW08} to establish the lattice property and make use of a result of Bessis \cite{BES03}, where he showed that the poset group of a finite Coxeter group – defined in Section~\ref{Sec:PosetGr} – is isomorphic to its related Artin group. We try to elaborate, give explaining examples and go into detail in the proofs so that the construction becomes understandable and easy to read.

\newpage

\section{Definitions and Notions}\label{sec:def}

For the most part, we will follow the notation of Björner and Brenti \cite{BB05}.
\subsection{Partially Ordered Sets}

\begin{Definition}
	A \emph{partially ordered set} – or short \emph{poset} – is a pair $(P,\leq)$ of a set $P$ and a relation $\leq$ on $P$ with the following properties. For all $p,q,r \in P$ it holds:
	\begin{enumerate}[label=(\roman*)]
		\item $p \leq p$ \quad (reflexivity),
		\item if $p \leq q$ and $q \leq r$, then $p \leq r$ \quad (transitivity),
		\item if $p \leq q$ and $q \leq p$, then $p = q$ \quad (anti-symmetry).
	\end{enumerate}
\end{Definition}

If the order relation $\leq$ is clear from the context, we call $P$ a poset. If $p \leq q$ but $p\neq q$, we also write $p<q$. A \emph{cover relation} is a pair $p < q$ such that there is no $r \in P$ with $p < r < q$.

For $Q \subseteq P$ we call $(Q, \leq)$ a \emph{subposet} of $(P, \leq)$ when $Q$ inherits the order of $P$. For $p,q \in P$ we define the interval $\bracket{p,q} := \set{r \in P | p \leq r \leq q}$.
A sequence $(p_0, p_1, \dots, p_k)$ of elements of $P$ is called a \emph{chain} if $p_0< p_1< \dots < p_k$, where $k$ is called the \emph{length} of the chain. The supremum of the lengths of all chains of  $P$ is called the \emph{rank} of $P$. A chain is \emph{maximal} if its elements are not a proper subset of the elements of any other chain. We call $P$ \emph{pure} if all maximal chains are of the same finite length. We call an element $p \in P$ \emph{maximal} if there is no $q \in P$ with $p < q$.

For a pure poset $P$ we define a \emph{rank} function $\rk\colon P \to \N_0$ by letting, for $p \in P$, $\rk(p)$ be the rank of the subposet $\set{q \in P | q \leq p}$.

\vspace*{17pt}
Let $P$ be a poset and $p,q,u,l \in P$.
We call $l$ the \emph{meet} of $p$ and $q$ and write $l = p \meet q$ if
\begin{itemize}
	\item $l \leq p$ and $l \leq q$ and
	\item for all $r \in P$ with $r \leq p $ and $r \leq q$ it holds $r \leq l$.
\end{itemize}
Analogously, we call $u$ the \emph{join} of $p$ and $q$ and write $u = p \join q$ if
\begin{itemize}
	\item $p \leq u$ and $q \leq u$ and
	\item for all $r \in P$ with $p \leq r$ and $q \leq r$ it holds $u \leq r$.
\end{itemize}

The meet is the greatest lower bound, while the join is the least upper bound. Meet and join are – if they exist – necessarily unique. If in a poset for all pairs of elements there exists a meet and a join, we call it a \emph{lattice}.

The \emph{Hasse diagram} of a poset $(P,\leq)$ is a diagram that represents its structure. It is a graph on the vertex set $P$ and for each cover relation $p \leq q$ there is an edge going upwards from $p$ to $q$.

\begin{Example}\label{ex:poset1}

	Let $P= \{1,2,3,4\}$ with partial order relation \[ 1 \leq 1,\quad 2 \leq 2,\quad 3 \leq 3,\quad 4 \leq 4,\quad 2 \leq 1,\quad 2 \leq 3,\quad 2 \leq 4,\quad 1 \leq 3 . \]
	
	The Hasse diagram of $(P, \leq)$ is depicted in Figure~\ref{fig:Hasse_ex}.
	\vspace*{9mm}
	\begin{figure}[h]
		\centering
		\includegraphics[width= 0.15\textwidth]{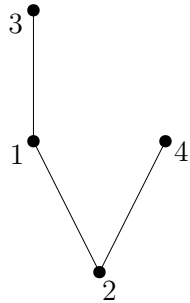}
		\caption{Hasse diagram of $(P,\leq)$}
		\label{fig:Hasse_ex}
	\end{figure}
\end{Example}

\subsection{Simplicial Complexes}\label{subsec:SK}

\begin{Definition}
	A non-empty family $\Delta$ of finite subsets of a set $V$ is called \emph{abstract simplicial complex}, if for any $F \in \Delta$ and $F' \subseteq F$ it also holds $F' \in \Delta$.
\end{Definition}

The set $V$ is called the \emph{set of vertices}. The elements of $\Delta$, which are subsets of $V$, are called \emph{faces}. Note that since we require an abstract simplicial complex to be non-empty, we always have $\emptyset \in \Delta$. Furthermore, we only consider abstract simplicial complexes where for each vertex $v$, $\{v\} \in \Delta$ is a face. In this case we identify $\{v\}$ with the vertex $v \in V$.

The \emph{dimension} of a face $F$ is defined as $\dim(F) = |F| - 1$. The dimension of $\Delta$ is defined as the supremum of the dimensions of all faces of $\Delta$. For $F' \subseteq F$, we define the interval $\bracket{F', F} := \set{G \in \Delta|F' \subseteq G \subseteq F}$.

A subset of $\Delta$ which is an abstract simplicial complex itself is called a \emph{subcomplex} of $\Delta$. The specific subcomplex 
$\bracket{\emptyset,F}$ for a face $F \in \Delta$ is called a \emph{simplex}.

For $n \in \N$, we define the \emph{$n$-skeleton} of $\Delta$ to be the subcomplex consisting of all faces of dimension at most $n$. Faces of dimension $1$ are also called \emph{edges}.

If $P$ is a poset, then we can associate to $P$ a specific complex $\Delta(P)$.

\begin{Definition}
	For a poset $P$, let $\Delta(P)$ be the following set.
	\[
		\Delta(P) := \big\{ \{p_0, p_1, \dots, p_k \} \subseteq P \, \big| \, p_0 < p_1 < \dots < p_k \text{ is a finite chain in } P \big\}.
	\]
	
\end{Definition}

The elements of $\Delta(P)$ are just all finite chains of $P$. Since all subsets of finite chains are finite chains themselves, $\Delta(P)$ is closed under containment and therefore $\Delta(P)$ is an abstract simplicial complex.

The complex $\Delta(P)$ is called the \emph{order complex} of $P$.

\begin{Definition}\label{def:SK}

	Let $\Delta$ be an abstract simplicial complex, $v$ a vertex of $\Delta$, $F \in \Delta$ a face and $S$ a collection of faces of $\Delta$.
	
	\begin{enumerate}[label=(\roman*)]

		\item The \emph{closure} of $S$, denoted as $\cl(S,\Delta)$, is the smallest subcomplex of $\Delta$ that contains all faces of $S$, i.e.
		\[
			\cl(S, \Delta) = \set{F \in \Delta | \exists F' \in S \colon F \subseteq F' }.
		\]
		For a face $F$, the closure $\cl(F,\Delta)$ is defined as $\cl\big( \{ F \}, \Delta \big)$.
			
		\item The \emph{star} of $v$, denoted as $\st(v,\Delta)$, is the smallest simplicial complex that contains all faces of $\Delta$ which contain $v$ as a vertex, i.e.
		\[
			\st(v,\Delta) := \cl\big( \set{F \in \Delta|v \in F}, \Delta \big).
		\]
		
		\item The \emph{link} of $v$, denoted as $\lk(v, \Delta)$, is the set that contains all faces of $\st(v,\Delta)$ which do not contain $v$, i.e.
		\[
			\lk(v, \Delta) := \big\{ F \in \st(v,\Delta) \, \big| \, v \notin F \big\}.
		\]
		
		\item The \emph{(simplicial) cone} over $\Delta$, denoted as $\Cone(\Delta)$, is obtained by introducing a new vertex $c$ to $\Delta$ and adding for each face
		$F$ of $\Delta$, the face $F' := \{c\} \cup F$.
		
		To specify the \emph{cone vertex} $c$ of a cone over the complex $\Delta$, we sometimes write $\Cone_c(\Delta)$.
	
	\end{enumerate}

\end{Definition}

Note that unlike other authors, we define the star as a closed (under containment) subset. Thus, it is always an abstract simplicial complex itself. The link of a vertex is also always an abstract simplicial complex.

\begin{Example}\label{ex:SK_1}

	Let $\Delta := \big\{ \emptyset, \{1\}, \{2\}, \{3\}, \{4\}, \{1,2\}, \{1,3\}, \{2,3\}, \{2,4\},\{1,2,3\} \big\}$. Then $\Delta$ is an abstract simplicial complex. In fact, it is the order complex of the poset $(P, \leq)$ from Example~\ref{ex:poset1}.
	\begin{itemize}
		\item The closure of $\big\{ \{1\}, \{2,3\} \big\}$ is
		$\cl \left( \big\{ \{1\}, \{2,3\} \big\}, \Delta \right) = \big\{ \emptyset, \{ 1 \}, \{ 2 \}, \{ 3 \}, \{2, 3\} \big\}$.
		
		\item The star of $2$ is $\st(2, \Delta) = \Delta$.
	
		\item The link of $2$ is $\lk(2, \Delta) = \big\{ \emptyset, \{1\}, \{3\}, \{4\}, \{1,3\} \big\}$.
		
		\item The cone over $ L:= \big\{\emptyset, \{1\}, \{2\}, \{1,2\} \big\}$ with cone vertex $3$ is\\
		$\Cone_3(L) = \big\{ \emptyset, \{1\}, \{2\}, \{3\}, \{1,2\}, \{1,3\}, \{2,3\}, \{1,2,3\} \big\}$
	
	\end{itemize}

\end{Example}

The following lemma will be needed in the proof of Theorem~\ref{thm:Xplus}, but since it holds for any abstract simplicial complex, we will prove it now.

\begin{Lemma}\label{lem:cone}
	For an abstract simplicial complex $\Delta$ and a vertex $v$ of $\Delta$, it holds
	\[\st(v,\Delta) = \Cone_v \big( \lk(v,\Delta) \big).\]
\end{Lemma}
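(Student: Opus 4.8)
The plan is to prove the set equality by double inclusion, after first replacing both sides by more tractable descriptions. The key preliminary observation is that the star admits the simple characterization
\[
  \st(v,\Delta) = \set{F \in \Delta | F \cup \{v\} \in \Delta}.
\]
To establish this, I would unwind the definition $\st(v,\Delta) = \cl\big(\set{F \in \Delta | v \in F}, \Delta\big)$ together with the explicit formula for the closure: a face $F$ lies in the star precisely when $F \subseteq F'$ for some $F' \in \Delta$ with $v \in F'$. If such an $F'$ exists, then $F \cup \{v\} \subseteq F'$, so $F \cup \{v\} \in \Delta$ because $\Delta$ is closed under passing to subsets; conversely, if $F \cup \{v\} \in \Delta$, one may take $F' = F \cup \{v\}$, which contains $v$ and has $F$ as a subset. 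This equivalence is exactly where the abstract-simplicial-complex axiom enters.

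Next I would spell out the right-hand side. By the definition of the cone, $\Cone_v\big(\lk(v,\Delta)\big)$ consists of the faces of $\lk(v,\Delta)$ together with the faces $G \cup \{v\}$ for $G \in \lk(v,\Delta)$. A small but genuine point to verify here is that although $v$ is a vertex of $\Delta$, it is not a vertex of $\lk(v,\Delta)$ — by definition no face of the link contains $v$ — so taking $v$ as the (formally new) cone vertex is legitimate and produces an honest abstract simplicial complex.

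With both descriptions in hand, the equality follows by splitting on whether a face contains $v$. For a face $F$ with $v \notin F$, the condition $F \cup \{v\} \in \Delta$ is exactly the condition $F \in \lk(v,\Delta)$, and these are precisely the cone faces not containing $v$. For a face $F$ with $v \in F$, write $F = G \cup \{v\}$ with $G := F \setminus \{v\}$; then $F \in \st(v,\Delta)$ reduces to $F \in \Delta$ (since $F \cup \{v\} = F$), while $F \in \Cone_v\big(\lk(v,\Delta)\big)$ reduces to $G \in \lk(v,\Delta)$, i.e.\ $G \cup \{v\} = F \in \Delta$ with $v \notin G$. Both conditions coincide, so the two complexes contain the same faces in each case, and the inclusions in both directions follow at once.

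I do not expect a serious obstacle: once the star is rewritten via the condition $F \cup \{v\} \in \Delta$, the remainder is a routine case distinction. The one place demanding care is that preliminary characterization of the star, where one must use both directions of the closure formula and the downward-closure of $\Delta$; getting the quantifier on $F'$ right is the crux of the argument.
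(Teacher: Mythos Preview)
Your proof is correct and follows essentially the same route as the paper: a double inclusion established by a case split on whether $v \in F$. Your preliminary characterization $\st(v,\Delta) = \set{F \in \Delta | F \cup \{v\} \in \Delta}$ streamlines the case analysis compared to the paper's more direct unpacking of the definitions, but the underlying argument is the same.
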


\begin{proof}
	Let $F$ be a face of $\st(v, \Delta)$. Then there are two cases.
	
	Either $F$ contains $v$ as a vertex. Then, $F' := F \setminus \{v\}$ is a face of $\st(v, \Delta)$, but does not contain $v$. Hence, we have $F' \in \lk(v, \Delta)$. But then, by definition of the cone, $F' \cup \{v\} = F$ must be a face of $\Cone_v \big( \lk(v,\Delta) \big)$.
	
	In the second case $F$ does not contain $v$. Thus, we have by definition of the link $F \in \lk(v, \Delta)$. But then $F$ is also a face of the cone over $\lk(v, \Delta)$.

\vspace{5mm}

	Now let $F$ be a face of $\Cone_v \big( \lk(v,\Delta) \big)$. Again, there are two cases.
	
	Either $v$ is a vertex of $F$, in which case $F$ is a face of $\st(v,\Delta)$.
	
	Or $v$ is not a vertex of $F$. Then, $F$ is already a face of $\lk(v, \Delta)$. But then, since the link is a subset of the star, we have $F \in \st(v, \Delta)$.
	
\end{proof}

For any abstract simplicial complex there is a topological space related to it. Such spaces are called \emph{geometric simplicial complexes} and are the geometric counterpart to abstract simplicial complexes.

A \emph{geometric simplex} of dimension $k$ is the convex hull of $k+1$ affinely independent points in $\R^n$ for $n \geq k$. The affinely independent points are called the \emph{vertices} of the geometric simplex and the convex hull of any subset of the vertices is called a \emph{face} of the simplex.

\begin{Definition}
	
	A \emph{geometric simplicial complex} $K$ is a non-empty collection of geometric simplices, such that
	\begin{itemize}
		\item any face of a simplex of $K$ is a simplex of $K$ and
		\item two simplices of $K$ intersect in a common face.
	\end{itemize}
	
\end{Definition}

Note that since we defined a face of a simplex to be the convex hull of \emph{any} subset of its vertices, the second condition could also mean that the intersection of two simplices is empty.

From any geometric simplicial complex $K$ we can derive an abstract simplicial complex by taking the sets of vertices of the geometric simplices of $K$ to be the faces of the abstract complex. In fact, any abstract simplicial complex can be obtained this way.\\

The other way around is more interesting for us; from any abstract simplicial complex $\Delta$ we can derive a geometric simplicial complex, denoted as $\|\Delta\|$, called its \emph{geometric realization}. This geometric realization is not unique, but all geometric realizations – regarded as topological spaces – are the same up to homeomorphism. Therefore we will talk about \emph{the} geometric realization. It is fully determined by the combinatorial properties of the abstract simplicial complex.\\

A sketch of one way to construct the geometric realization of a given abstract simplicial complex $\Delta$ is the following.

Take for every face $F \in \Delta$ one geometric simplex $s_F$ of dimension $\dim(F)$, and define a map $f_F \colon F \to s_F$ which identifies the vertices of the face $F$ of the abstract complex with the vertices of the geometric simplex $s_F$. Then, define for each pair $F_1, F_2 \in \Delta$ with $F_1 \subseteq F_2$ the inclusion $\iota_{F_1F_2} \colon s_{F_1} \hookrightarrow s_{F_2}$ of the corresponding geometric simplices, such that the following diagram commutes.\\

\begin{center}
	\begin{tikzcd}
		F_1 \arrow[r, "f_{F_1}"] \arrow[d, "\rotatebox{270}{$\subseteq$}"']
		& s_{F_1} \arrow[d, hook, "\iota_{F_1F_2}"] \\
		F_2 \arrow[r, "f_{F_2}"]
		& s_{F_2}
	\end{tikzcd}\\
\end{center}

Let $\sim$ be the coarsest equivalence relation on $\bigsqcup_{F \in \Delta} s_F$ such that for all
$x \in \bigsqcup_{F \in \Delta} s_F$ it holds $x \sim \iota_{F_1F_2} (x)$ for $F_1, F_2 \in \Delta$ with $F_1 \subseteq F_2$.

Then,
\[
	\| \Delta \| := \faktor{\bigsqcup_{F \in \Delta} s_F}{\sim}
\]
is the geometric realization of $\Delta$.

A more detailed explanation of this construction can be found in \cite{BOA02}, which is a note on §2.1 of Hatcher's \cite{HAT02}.\\

Other approaches to defining the geometric realization of an abstract simplicial complex can for example be found in Chapter~1, §4 of \cite{AS60}.

\begin{Example}

	Some geometric realizations of subsets of the abstract simplicial complex~$\Delta$ from Example~\ref{ex:SK_1} are the following.
	
	\begin{figure}[h!]
		\centering
		\begin{subfigure}{0.45\linewidth}
			\centering
     		\includegraphics[scale=0.53]{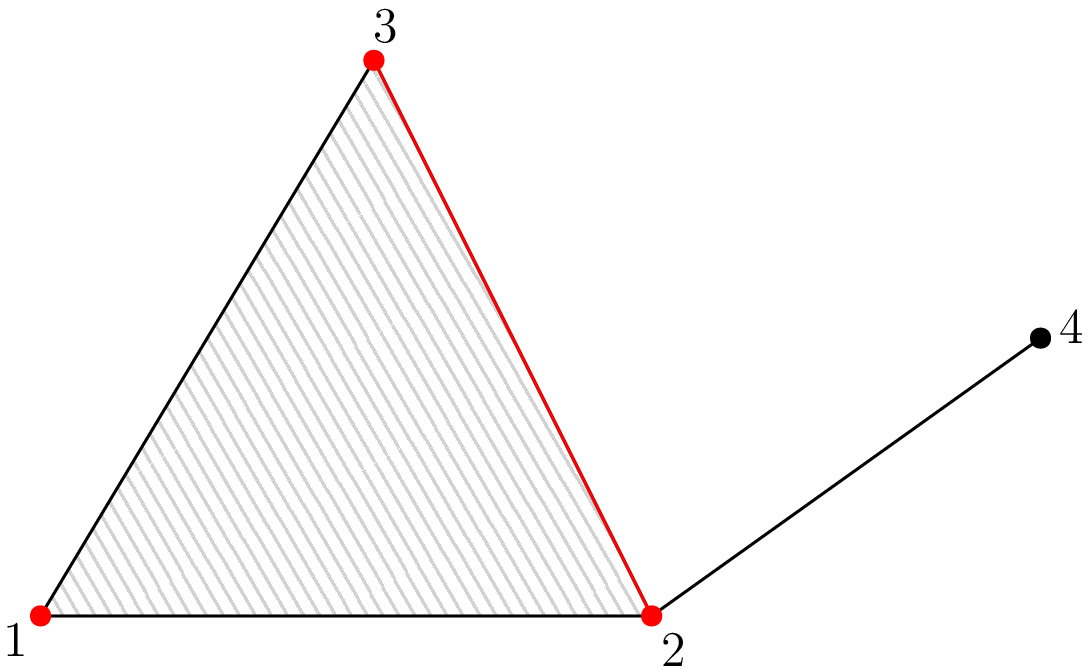}
     		\caption{The subcomplex $\cl \left( \big\{ \{1\}, \{2,3\} \big\}, \Delta \right)$ is highlighted.}\label{fig:SK_Ex1}
     	\end{subfigure}
     	\hspace*{10mm}
     	\begin{subfigure}{0.45\linewidth}
     		\centering
     		\includegraphics[scale=0.53]{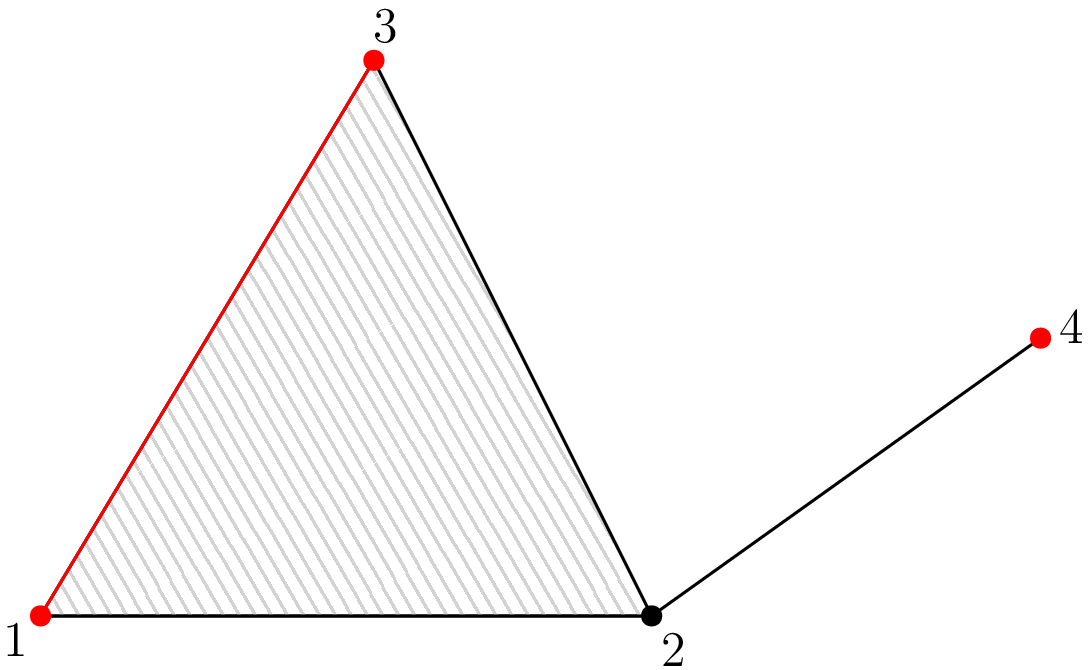}
     		\caption{The subcomplex $\lk(2, \Delta )$ is highlighted.}\label{fig:SK_Ex3}
     	\end{subfigure}
     	\caption{Geometric realizations of a simplicial complex} \label{fig:SK_Ex}
	\end{figure}

\end{Example}

Throughout this thesis we will often talk about `the simplicial complex' without specifying on whether we mean the abstract simplicial complex or its geometric realization. It should be clear that whenever we refer to combinatorial properties, we mean the abstract complex and when we refer to topological properties, such as contractibility, we mean its geometric realization.


\subsection{Coxeter Groups and Artin Groups}

A matrix $(m_{i,j})_{i,j=1, \dots, n}$, $n \in \N$, with $m_{i,j} \in \N \cup \{\infty\}$ is called a \emph{Coxeter matrix} if it is symmetric and $m_{i,j} = 1$ if and only if $i=j$.

\begin{Definition}
	A group $W$ is called a \emph{Coxeter group} if it admits a presentation $W=\langle S | R \rangle$ with generating set $S=\{s_1, \dots , s_n \}$ and defining relations $R$, consisting of
	\[
		s_i^2 = I \text{ for } i=1, \dots n, \quad
		\text{ and }  \underbrace{s_i s_j s_i\dots}_{m_{i,j}\text{ times}} =  \underbrace{s_j s_i s_j\dots}_{m_{i,j}\text{ times}} \text{ for all } i\neq j \text{ with } m_{i,j} \neq \infty
	\]
	of an $n\times n$ Coxeter matrix $(m_{i,j})$.
	
\end{Definition}

$(W,S)$ is called a \emph{Coxeter system} with Coxeter group $W$ and \emph{Coxeter generators} $S$. The cardinality of $S$ is called the \emph{rank} of $W$, denoted as $\rk(W)$. For a Coxeter group $W$, the generating set $S$ is not unique. Thus, we have to specify one set of Coxeter generators. The $1$s on the main diagonal of a Coxeter matrix just mean that the generators are all involutions. If an entry of the Coxeter matrix is $m_{i,j} = \infty$, this means that the product $(s_is_j)$ has `order infinity' – meaning $(s_is_j)^k \neq I$ for all $k \in \N$, where $I$ denotes the identity in $W$.\\

Another way of describing the relations of a Coxeter group is by the \emph{Coxeter graph}. The vertex set of the Coxeter graph of $(W,S)$ is $S$ and there is an edge joining $s_i$ and $s_j$ if $m_{i,j} \geq 3$. If $m_{i,j} \geq 4$, the edge joining $s_i$ and $s_j$ is labeled by $m_{i,j}$.

Note that we could also write the relations of a Coxeter group as $R=\{(s_i s_j)^{m_{i,j}} = I\}$ which includes the $s_i^2 = I$. But because of the following definition we prefer the former presentation.

\begin{Definition}
	Let $(W,S)$ be a Coxeter system with Coxeter matrix $(m_{i,j})_{i,j=1, \dots, n}$. The \emph{Artin group} associated to $(W,S)$ is defined as
	\[
	A(W) := A(W,S):= \langle S | \underbrace{s_i s_j s_i\dots}_{m_{i,j}\text{ times}} =  \underbrace{s_j s_i s_j\dots}_{m_{i,j}\text{ times}} \text{ for } i,j = 1, \dots n \rangle.
	\]
\end{Definition}

An Artin group $A(W)$ is said to be of \emph{finite type} if its related Coxeter group $W$ is finite.

\begin{Example}

	Consider the Coxeter matrix
	$m=
	\begin{pmatrix}
		1 & 3 & 2 \\
		3 & 1 & 3 \\
		2 & 3 & 1 
	\end{pmatrix}
	$. The equivalent Coxeter graph is depicted in Figure~\ref{fig:Cox_graph}.
	\vspace*{2mm}
	\begin{figure}[h!]
		\centering
		\includegraphics[width=0.35\linewidth]{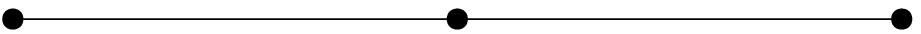}
		\caption{Coxeter graph of $S_4$}
		\label{fig:Cox_graph}
	\end{figure}
	
	The Coxeter group determined by the Coxeter matrix $m$ is the symmetric group $S_4$, which consists of all permutations of the set $\{1,2,3,4\}$. To denote permutations, we use the cycle notation with commas separating the elements in the cycle. A set of Coxeter generators is given by $S = \{s_1, s_2, s_3\}$ with $s_i = (i, i+1)$, the set of all adjacent transpositions. Then the relations are $s_i^2=1$, $s_i s_j s_i = s_j s_i s_j$ if $|i-j|=1$ and $s_i s_j = s_j s_i$ if $|i-j|>1$.
	
	The Artin group associated to $S_4$ is the braid group on $4$ strands, $B_4$. It consists of all braids on $4$ strands and is generated by the braids that swap two adjacent strands, left over right. The generator $b_{2,3}$ is depicted in Figure~\ref{fig:braid}, as an example.
	\vspace*{1mm}
	\begin{figure}[h!]
		\centering
		\includegraphics[width=.118\linewidth]{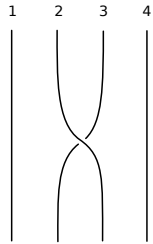}
		\caption{The generator $b_{2,3}$ of $B_4$.}
		\label{fig:braid}
	\end{figure}

\end{Example}

Since the Artin group which is associated to the symmetric group $S_n$ is the braid group on $n$ strands, Artin groups are also called \emph{generalized braid groups}.

Essentially, the Artin group has the same relations as the Coxeter group except that the generators are not self-inverse. Indeed, we even have $s^k \neq I$ for $s \in S$ and all $k \in \N$, where $I$ denotes the identity in the Artin group. To see this, we can construct a homomorphism to $\Z$. For this, consider the map that maps all elements of $S$ to $1 \in \Z$. Then, one can show that this extends to a homomorphism $A \to \Z$. Since the image of $s^k$ is never $0 \in \Z$ for $k \neq 0$, we know that $s^k \neq I$ if $k\neq 0$.

In particular, no Artin group is finite.\\

There exists a natural surjective homomorphism $\phi \colon A(W) \to W$ which maps each generator in $A(W)$ to its counterpart in $W$,
\[
	A(W) \ni s_i \mapsto \phi (s_i) = s_i \in W, \, \forall s_i \in S.
\]
The kernel of this homomorphism is generated by the set $\set{s_i^2 = 1| s_i \in S }$.

In case of the symmetric group and the braid group this homomorphism has a nice depiction. By forgetting how the strands of a braid cross and only looking at the positions of the starting points and end points of the strands, it can be viewed as a permutation. The kernel is then all braids where for each strand starting point and end point are at the same position. Such braids are called \emph{pure braids}.\\

The following definition introduces elements of $W$ which play a special role. Note that we only define them for finite Coxeter groups $W$.

\begin{Definition}

	For $(W,S)$ a Coxeter system of rank $n$ with finite Coxeter group $W$, an element conjugate to $s_1 \dots s_n$ is called a \emph{Coxeter element}.

\end{Definition}

Note that our definition follows Armstrong~\cite{ARM09}, while many authors, as e.g. Humphreys in \cite{HUM90}, define a Coxeter element to be an element of the form $s_{\sigma(1)} \dots s_{\sigma(n)}$, for $\sigma$ a permutation in the symmetric group $S_n$. Since Humphreys showed in Proposition~3.16 of \cite{HUM90} that any two Coxeter elements are conjugate (for his definition) it follows that our definition includes those elements. From this fact also follows that our Coxeter elements form a single conjugacy class.

\newpage

\section{A Lattice in the Coxeter Groups}\label{sec:lat}

\subsection{The Reflection Order}
From now on we only consider finite Coxeter groups $W$.

For a Coxeter system $(W,S)$ we define $T:=\set{wsw^{-1} | w \in W, s \in S}$ to be the conjugacy closure of $S$. We call $T$ the \emph{set of reflections} and an element $\tau \in T$ a \emph{reflection}. An element of $S \subseteq T$ is also called a \emph{simple reflection}.
For $w \in W$ we call $\tau_1 \tau_2 \dots \tau_l = w$ with $\tau_i \in T$ a $T$-decomposition of $w$.

This notion was first introduced by Bessis in \cite{BES03}. He called $(W,T)$ a \emph{dual Coxeter system} and was one of the first to study Coxeter groups with a larger generating set which is closed under conjugation.

\begin{Definition}
	For $w \in W$ let $\ell(w)$ be the minimal number of reflections in a $T$-decomposition of $w$, the \emph{reflection length} of $w$.

\end{Definition}

A $T$-decomposition of an element $w$ using $\ell (w)$ reflections is called a \emph{reduced $T$-decomposition} or – if it is clear from the context – \emph{reduced decomposition}.

The reflection length of $w$ is exactly the geodesic distance of the identity and $w$ on the Cayley graph of $W$ with generating set $T$.

\begin{Lemma}\label{Lem:conj}
	The reflection length is a conjugacy invariant, i.e.
	\[\ell (w) = \ell (u w u^{-1}) \text{ for }  u,w \in W.\]
\end{Lemma}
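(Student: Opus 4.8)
The plan is to exploit the single structural fact that makes reflection length well-behaved under conjugation: the set $T$ is by definition the conjugacy closure of $S$, so it is closed under conjugation by arbitrary elements of $W$. Concretely, for any $\tau \in T$ and any $u \in W$, the element $u\tau u^{-1}$ again lies in $T$. This is immediate from $T = \set{wsw^{-1} | w \in W, s \in S}$, since conjugating $wsw^{-1}$ by $u$ yields $(uw)s(uw)^{-1}$, which is of the same form. Everything else follows by transporting reduced decompositions back and forth.

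First I would take a reduced $T$-decomposition $w = \tau_1 \tau_2 \cdots \tau_l$ with $l = \ell(w)$, and conjugate it by $u$. Inserting $u^{-1}u$ between consecutive factors gives
\[
	u w u^{-1} = (u \tau_1 u^{-1})(u \tau_2 u^{-1}) \cdots (u \tau_l u^{-1}),
\]
and by the conjugation-closure of $T$ each factor $u \tau_i u^{-1}$ is again a reflection. Hence this is a $T$-decomposition of $u w u^{-1}$ of length $l$, which shows $\ell(u w u^{-1}) \leq l = \ell(w)$.

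For the reverse inequality I would simply apply the same argument to the element $u w u^{-1}$ conjugated by $u^{-1}$, using that $w = u^{-1}(u w u^{-1})u$. This yields $\ell(w) \leq \ell(u w u^{-1})$, and combining the two inequalities gives equality. Since $u,w$ were arbitrary, the claim holds in general.

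Honestly there is no real obstacle here: the entire content sits in the observation that $T$ is conjugation-invariant, after which both inequalities are one-line symmetry arguments. The only point demanding any care is making sure the decomposition produced for $u w u^{-1}$ is genuinely a $T$-decomposition (i.e.\ that the factors are reflections, not merely elements of $W$), which is exactly what the closure of $T$ under conjugation guarantees. One could even phrase the result more abstractly as saying that conjugation by $u$ is an automorphism of the Cayley graph of $(W,T)$ fixing the generating set setwise, hence an isometry, so it preserves geodesic distance from the identity; but the direct computation above is shorter and self-contained.
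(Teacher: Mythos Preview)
Your proof is correct and follows essentially the same approach as the paper: conjugate a reduced $T$-decomposition of $w$ to get a $T$-decomposition of $uwu^{-1}$ of the same length (using that $T$ is closed under conjugation), giving $\ell(uwu^{-1}) \leq \ell(w)$, and then swap roles for the reverse inequality. The paper's argument is identical in substance, only slightly terser.
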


\begin{proof}
	Let $u,w \in W$ and $w=\tau_1 \dots \tau_k$ be a reduced $T$-decomposition of $w$. Then $k = \ell(w)$ and
	\[ u w u^{-1}= u \tau_1 \dots \tau_k u^{-1} = u \tau_1 u^{-1} \dots u \tau_k u^{-1} .\]
	Now, since $\tau_i \in T$ for all $i \in \{1, \dots, k \}$ and $T$ is closed under conjugation, we have $ u \tau_i u^{-1} \in T$ for all $i$. Thus, $u w u^{-1}$ admits a $T$-decomposition with $k = \ell (w)$ reflections. This shows $\ell(uwu^{-1}) \leq \ell(w)$.
	
	To see $\ell(w) \leq \ell (uwu^{-1})$, note that $w$ is a conjugate of $uwu^{-1}$, so the roles can be swapped.
\end{proof}

\begin{Lemma}\label{Lem:subad}
	The reflection length is sub-additive, i.e.
	\[
		\ell (uw) \leq \ell (u) + \ell (w) \text{ for } u,w \in W .
	\]
\end{Lemma}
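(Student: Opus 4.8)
The plan is to concatenate reduced decompositions of the two factors and invoke minimality of reflection length. Concretely, I would begin by taking a reduced $T$-decomposition of each factor separately: write $u = \tau_1 \dots \tau_{\ell(u)}$ with each $\tau_i \in T$, and $w = \sigma_1 \dots \sigma_{\ell(w)}$ with each $\sigma_j \in T$. These exist by the very definition of reflection length as the minimal length over all $T$-decompositions.

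Next I would simply multiply these two expressions to obtain
\[
	uw = \tau_1 \dots \tau_{\ell(u)} \, \sigma_1 \dots \sigma_{\ell(w)}.
\]
Every factor on the right-hand side lies in $T$, so this is a $T$-decomposition of $uw$, and it uses exactly $\ell(u) + \ell(w)$ reflections. The final step is to recall that $\ell(uw)$ is by definition the \emph{minimal} number of reflections appearing in any $T$-decomposition of $uw$; since we have exhibited one such decomposition of length $\ell(u) + \ell(w)$, the minimum can only be smaller or equal, giving $\ell(uw) \leq \ell(u) + \ell(w)$.

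There is essentially no obstacle here. The only point requiring the smallest care is ensuring that concatenating the two reduced decompositions genuinely yields a valid $T$-decomposition of the product — but this is immediate because the word is a product of elements of $T$ and $T$ need not be closed under multiplication, only the membership of each \emph{factor} in $T$ matters. This is exactly the statement that reflection length, being the geodesic distance on the Cayley graph with generating set $T$ as noted after the definition, satisfies the triangle inequality; the sub-additivity claim is just that triangle inequality written multiplicatively, since $d(I, uw) \leq d(I, u) + d(u, uw)$ and $d(u, uw) = \ell(w)$ by left-invariance of the word metric.
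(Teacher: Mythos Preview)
Your proof is correct and follows exactly the same approach as the paper: take reduced $T$-decompositions of $u$ and $w$, concatenate them to get a $T$-decomposition of $uw$ with $\ell(u)+\ell(w)$ reflections, and conclude by minimality. The additional remark about the Cayley-graph triangle inequality is a nice gloss but not needed for the argument.
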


\begin{proof}
	Let $u,w \in W$ and $u=\tau_1 \dots \tau_k$, $w=\sigma_1 \dots \sigma_{k'}$ be reduced $T$-decompositions. Then $\ell(u) = k, \ell (w) = k'$ and $\tau_1 \dots \tau_k \sigma_1 \dots \sigma_{k'} = uw$ is a $T$-decomposition of $uw$ and we have $\ell (uw) \leq \ell (u) + \ell (w)$.
\end{proof}

Equality holds whenever $u$ lies on a geodesic from the identity to $uw$ in the Cayley graph of $W$ with generating set $T$, which in turn is the case if and only if there is a shortest $T$-decomposition of $uw$ with a $T$-decomposition of $u$ being a prefix.

Now we define the partial order that will play a central role in the construction of the $K(\pi,1)$s for Artin groups of finite type. Since it is based on the reflection length, we call it \emph{reflection order}.

\begin{Definition}
	Let $\leq$ be the \emph{reflection order} on $W$ defined as
	\[ u \leq w :\iff  \ell(w) = \ell (u) + \ell (u^{-1} w).
	\]
\end{Definition}

In the literature, the reflection order is also called \emph{absolute order}.

It is now easy to observe the following lemma.

\begin{Lemma}
	The group $W$ together with the reflection order $\leq$ is a partially ordered set.
	
\end{Lemma}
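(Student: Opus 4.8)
The plan is to verify the three defining properties of a partial order directly from the definition $u \leq w \iff \ell(w) = \ell(u) + \ell(u^{-1}w)$, relying on only two elementary facts about reflection length — that $\ell(I) = 0$ (the identity admits the empty $T$-decomposition) and that $\ell$ takes values in $\N_0$ — together with the sub-additivity established in Lemma~\ref{Lem:subad}. Reflexivity is immediate: since $u^{-1}u = I$ and $\ell(I) = 0$, the equation $\ell(u) = \ell(u) + \ell(u^{-1}u)$ holds trivially, so $u \leq u$.

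For anti-symmetry I would assume both $u \leq w$ and $w \leq u$ and add the two defining equalities $\ell(w) = \ell(u) + \ell(u^{-1}w)$ and $\ell(u) = \ell(w) + \ell(w^{-1}u)$. After cancelling $\ell(u) + \ell(w)$ from both sides, one is left with $\ell(u^{-1}w) + \ell(w^{-1}u) = 0$. Because reflection length is non-negative, both summands must vanish; in particular $\ell(u^{-1}w) = 0$ forces $u^{-1}w = I$, i.e.\ $u = w$.

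The transitivity step is where the real work lies, and I expect it to be the main obstacle. Assuming $u \leq v$ and $v \leq w$, I would substitute the first relation into the second to obtain $\ell(w) = \ell(u) + \ell(u^{-1}v) + \ell(v^{-1}w)$, so that the claim $u \leq w$ reduces to showing $\ell(u^{-1}w) = \ell(u^{-1}v) + \ell(v^{-1}w)$. The inequality ``$\leq$'' here is exactly sub-additivity (Lemma~\ref{Lem:subad}) applied to the product $u^{-1}w = (u^{-1}v)(v^{-1}w)$. For the reverse inequality I would invoke sub-additivity a second time, now for the factorization $w = u \cdot (u^{-1}w)$, giving $\ell(w) \leq \ell(u) + \ell(u^{-1}w)$; comparing this with the displayed expression for $\ell(w)$ yields $\ell(u^{-1}v) + \ell(v^{-1}w) \leq \ell(u^{-1}w)$. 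The two inequalities together force equality, and substituting back produces $\ell(w) = \ell(u) + \ell(u^{-1}w)$, i.e.\ $u \leq w$.

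The point to emphasize is that transitivity is \emph{not} a purely formal consequence of the definition: it genuinely uses Lemma~\ref{Lem:subad} in two different groupings of the same elements, once to bound $\ell(u^{-1}w)$ from above and once to bound it from below via the chain relation. Reflexivity and anti-symmetry, by contrast, need nothing beyond $\ell(I)=0$ and non-negativity, which is why those are the ``easy'' directions and transitivity carries the weight of the argument.
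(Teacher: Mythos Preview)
Your proof is correct and follows essentially the same route as the paper: reflexivity from $\ell(I)=0$, anti-symmetry from non-negativity, and transitivity via two applications of sub-additivity (once to bound $\ell(u^{-1}w)$ above by $\ell(u^{-1}v)+\ell(v^{-1}w)$, once via $\ell(w)\leq\ell(u)+\ell(u^{-1}w)$ to bound it below). The only cosmetic difference is that the paper compares $\ell(w)$ directly to $\ell(u)+\ell(u^{-1}w)$ from both sides rather than isolating the equality $\ell(u^{-1}w)=\ell(u^{-1}v)+\ell(v^{-1}w)$ as an intermediate goal, but the logical content is identical.
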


\begin{proof}
	
	We will show that the three defining axioms reflexivity, transitivity and anti-symmetry hold. For this, let $u,v,w \in W$.
	\begin{enumerate}[label=(\roman*)]
		\item Since $\ell (w) = \ell(w) + \ell(I) =  \ell (w) + \ell (w^{-1}w)$, we have $ w \leq w$.
	
		\item If $u \leq v, \, v \leq w$, we can deduce
				\[\begin{aligned}[t]
						\ell (w) &= \ell (v) + \ell (v^{-1}w) \\
						&= \ell (u) + \ell (u^{-1}v) + \ell (v^{-1}w)\\
						&\geq \ell (u) + \ell (u^{-1}vv^{-1}w)\\
						&= \ell (u) + \ell (u^{-1}w).
				\end{aligned} \]
				The inequality is due to the sub-additivity of the reflection length (see Lemma~\ref{Lem:subad}). But for the same reason we have $\ell (w) \leq \ell (u) + \ell (u^{-1}w)$, which gives equality and therefore $u \leq w$.
		
		\item If $u \leq w, \, w \leq u$, we have $\ell (w) = \ell (u) + \ell (u^{-1}w) = \ell (w) + \ell (w^{-1}u) + \ell (u^{-1}w)$. But since the reflection length is non-negative we can conclude $\ell (u^{-1}w) = \ell (w^{-1}u) = 0$. Thus, we have $u=w$.
	
	\end{enumerate}

\end{proof}

\subsection{Non-Crossing Partition Lattices}

From now on let $W$ be any finite Coxeter group, $\leq$ the reflection order on it and $\gamma$ an arbitrary Coxeter element in $W$.

Now, as the heading of the section suggests, we want to find a lattice in the poset $(W,\leq)$. Surely, $W$ itself is not necessarily a lattice, since two elements of maximal length have no join. That the reflection length on $W$ is indeed bounded above was first shown by Carter in \cite[Lemma~1-3]{CAR72}. He proved that a maximal element has a reflection length of $|S| = \rk(W)$. Although he only showed this for Weyl groups, the same arguments hold for finite Coxeter groups in general, as noted by Dyer \cite{DYE01}, Bessis \cite{BES03}, Armstrong \cite{ARM09} and many others.

It also follows from Carter's Lemma~3 in \cite{CAR72} that Coxeter elements attain this maximal length.

\begin{Example}

	Consider for example the symmetric group $S_3$ of all permutations of the three-element set $\{1,2,3\}$ with Coxeter generating set $\{(1,2), (2,3)\}$, the set of adjacent transpositions of $S_3$.
	The generating set $T$ is in this case $T_{S_3} = \{(1,2), (1,3), (2,3)\}$ and the Hasse diagram of the reflection order is displayed in Figure~\ref{Fig:Hasse_S3}.

\begin{figure}
	\centering
	\includegraphics[scale=1]{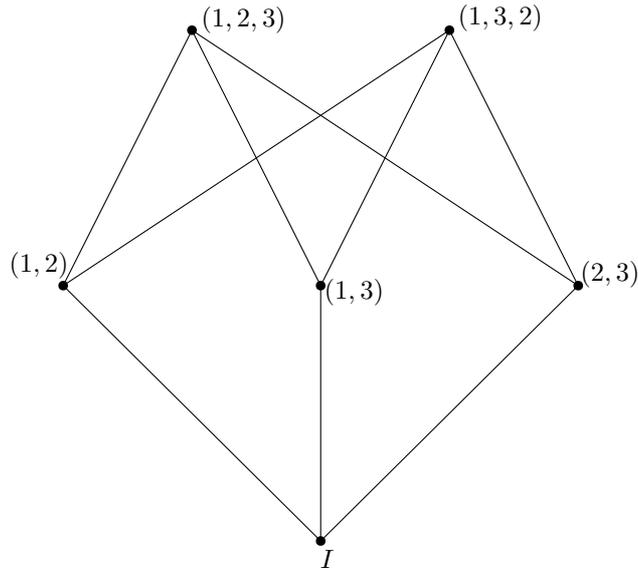}
	\caption{Hasse diagram of the reflection order on $S_3$}
	\label{Fig:Hasse_S3}
\end{figure}

	Obviously the elements $(1,2,3)$ and $(1,3,2)$ are never comparable, since they have the same reflection length of $2$. In particular, they do not have a join.
	
\end{Example}

However, this example might be misleading. Armstrong noted in \cite{ARM09} that although all Coxeter elements are maximal elements of the reflection order, in general not all maximal elements are Coxeter elements. The second implication only holds in the case of type $A$ Coxeter groups, which are the symmetric groups.

\vspace{5mm}

As Brady and Watt have shown in \cite{BW08} and Bessis independently in \cite{BES03}, the subposet $[I,\gamma]$ for $\gamma$ any Coxeter element in $W$ does in fact form a lattice, if $W$ is a finite Coxeter group.

\begin{Theorem}[{\cite[Theorem~7.8]{BW08}}]
\label{thm:lattice}
	
	If $W$ is a finite Coxeter group equipped with the reflection order and $\gamma$ is a Coxeter element, then $[I,\gamma]$ is a lattice.
	
\end{Theorem}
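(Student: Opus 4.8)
The plan is to prove the lattice property by the case-free geometric method of Brady and Watt, working in the reflection representation rather than combinatorially. First I would realize $W$ as a finite reflection group acting orthogonally on $V = \R^n$, where $n = \rk(W)$, so that every reflection $\tau \in T$ acts as an orthogonal reflection fixing a hyperplane. To each $w \in W$ I attach its \emph{moved space} $\mathrm{Mov}(w) := \mathrm{im}(w - I) \subseteq V$ and its fixed space $\mathrm{Fix}(w) = \mathrm{Mov}(w)^\perp$. The foundational input, due to Carter (noted above), is that $\ell(w) = \dim \mathrm{Mov}(w)$; in particular $\gamma$, being a maximal element of length $n$, satisfies $\mathrm{Mov}(\gamma) = V$ and $\mathrm{Fix}(\gamma) = 0$.

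The first reduction is purely order-theoretic. The poset $[I,\gamma]$ is finite with greatest element $\gamma$ and least element $I$, so by the standard fact that a finite poset with a top element in which every pair has a meet is automatically a lattice, it suffices to prove that any two $u, v \in [I,\gamma]$ possess a greatest lower bound in $[I,\gamma]$. Joins are then recovered as $u \join v = \bigwedge \{\, w \in [I,\gamma] \mid u \le w,\ v\le w \,\}$, the set of common upper bounds being nonempty because it contains $\gamma$. I would also record the Kreweras-type complement $w \mapsto w^{-1}\gamma$, which one checks is an order-reversing bijection of $[I,\gamma]$ onto itself, so that existence of meets and existence of joins are in fact equivalent.

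The geometric heart is to translate the reflection order into a statement about subspaces. Using subadditivity (Lemma~\ref{Lem:subad}) together with $\ell = \dim\mathrm{Mov}$, I would first show that $u \le w$ forces $\mathrm{Mov}(u) \subseteq \mathrm{Mov}(w)$, and then establish the sharper Brady--Watt characterization that $u\le w$ holds precisely when $\mathrm{Mov}(u)\subseteq\mathrm{Mov}(w)$ and $w$ restricts compatibly on $\mathrm{Mov}(u)$, so that $w \mapsto \mathrm{Mov}(w)$ becomes an order-embedding of $[I,\gamma]$ into the subspace lattice of $V$. Dually, every common lower bound $z$ of $u$ and $v$ satisfies $\mathrm{Mov}(z) \subseteq \mathrm{Mov}(u)\cap\mathrm{Mov}(v)$, so the natural candidate for the meet is the element of $[I,\gamma]$ whose moved space is the largest subspace of $\mathrm{Mov}(u)\cap\mathrm{Mov}(v)$ that is genuinely realized by a common lower bound.

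The main obstacle --- and the place where real work is needed --- is exactly that last qualification: the subspace lattice of $V$ is trivially a lattice, but the image of $[I,\gamma]$ under $\mathrm{Mov}$ is \emph{not} a sublattice of it, so $\mathrm{Mov}(u)\cap\mathrm{Mov}(v)$ need not be the moved space of any element lying below both $u$ and $v$. One must show that among the common lower bounds there is a unique maximal one. Following Brady and Watt, I would first prove the lattice property for the interval below an arbitrary orthogonal transformation inside the full group $O(V)$, where moved and fixed spaces can be manipulated freely and a greatest common lower bound can be constructed explicitly; the delicate part is then to verify that this orthogonal meet, built a priori only in $O(V)$, in fact lands back in the reflection group $W$ and below $\gamma$. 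This descent from $O(V)$ to $W$ --- hinging on $W$ being generated by the reflections $T$ and on $\gamma$ being essential --- is the genuinely hard step, and the reason the naive ``intersect the moved spaces'' recipe has to be corrected.
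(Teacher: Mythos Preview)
The paper does not prove this theorem at all; it simply imports it as a black box from \cite{BW08} (and notes Bessis' earlier case-by-case proof). So there is no ``paper's own proof'' to compare against, and your sketch is effectively a summary of what you believe the cited proof does.

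Your setup is faithful to the Brady--Watt framework: realizing $W$ in $O(V)$, using Carter's identity $\ell(w)=\dim\mathrm{Mov}(w)$, the reduction to meets via finiteness plus the top element $\gamma$, the Kreweras anti-automorphism $w\mapsto w^{-1}\gamma$, and the order-embedding of $[I,\gamma]_{O(V)}$ into the subspace lattice via $\mathrm{Mov}$. All of that is correct and is exactly the scaffolding of \cite{BW08}.

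The gap is in your last paragraph. You write that the delicate step is to ``verify that this orthogonal meet, built a priori only in $O(V)$, in fact lands back in the reflection group $W$''. This is not something that can be verified, because it is false. Take $W=S_4$ acting on $\{\,x\in\R^4:\sum x_i=0\,\}$ with $\gamma=(1,2,3,4)$, and set $u=(1,2)(3,4)$, $v=(1,4)(2,3)$, both in $[I,\gamma]$. Then $\mathrm{Mov}(u)\cap\mathrm{Mov}(v)$ is the line spanned by $e_1-e_2+e_3-e_4$, which is not a root direction, so the unique $O(V)$-element below $\gamma$ with that moved space is a reflection \emph{not} in $W$. In $[I,\gamma]_W$ the meet $u\meet v$ is the identity, whose moved space is $\{0\}$, strictly smaller than the intersection. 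So the strategy ``compute the meet in $O(V)$ and check it lies in $W$'' is not a proof waiting for a hard verification; it is the wrong strategy.

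What Brady and Watt actually do in \cite{BW08} is different: they do not attempt to descend the $O(V)$-meet. Instead they exploit the special geometry of Coxeter elements to show that for each reflection $t\le\gamma$ the element $t^{-1}\gamma$ is a Coxeter element of a proper parabolic subgroup, so that intervals $[I,w]$ for $w\in[I,\gamma]$ decompose as products of non-crossing partition lattices of smaller rank. The lattice property then follows by induction on $\rk(W)$. Your sketch would need to replace its final paragraph with this parabolic-recursion argument rather than an $O(V)$-descent.
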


The lattice is also referred to as a non-crossing partition lattice. It is an algebraic generalization of the classical non-crossing partitions, which were first studied by Kreweras in 1972 in \cite{KRE72}. He also proved that they form, ordered by refinement, a lattice.
Biane proved in \cite{BIA97} that in the case of type~$A$ Coxeter groups, the lattice $[I,\gamma]$ coincides with the classical non-crossing partitions, which can be imagined as follows.

Take the set $\{1, \dots , n\}$ and place the elements on a circle, circularly ordered in the natural way. Then the non-crossing partitions of this set are precisely those partitions for which one can draw all partition blocks as convex sets such that no two blocks intersect. For some examples of crossing and non-crossing partitions of the set $\{1, \dots , 7\}$ consider Figure~\ref{fig:NCP}.

\begin{figure}[h]
\centering
   \begin{subfigure}{0.49\linewidth} \centering
     \includegraphics[scale=0.37]{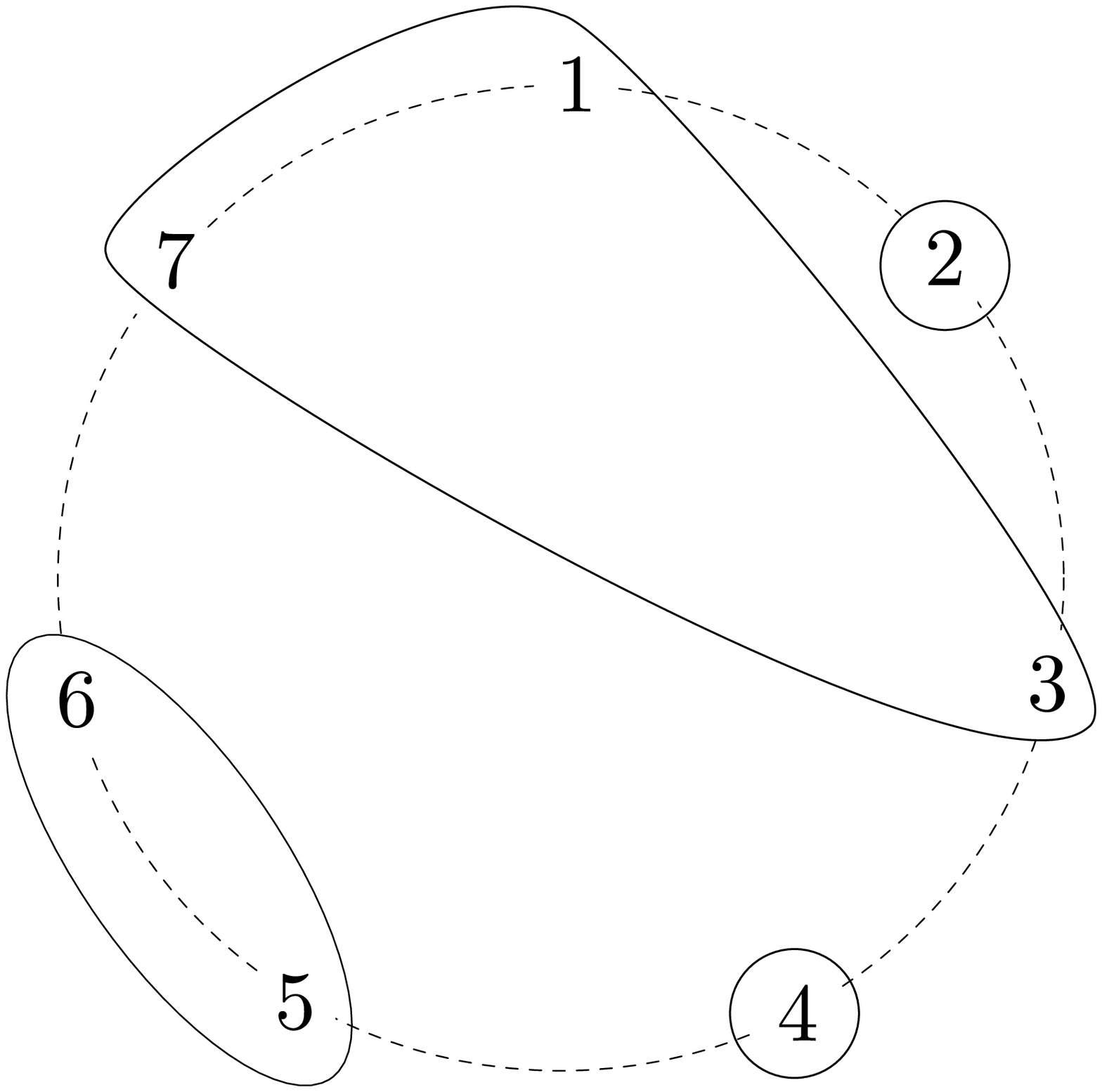}
     \caption{A non-crossing partition}\label{fig:NCP1}
   \end{subfigure}
   \begin{subfigure}{0.49\linewidth} \centering
     \includegraphics[scale=0.37]{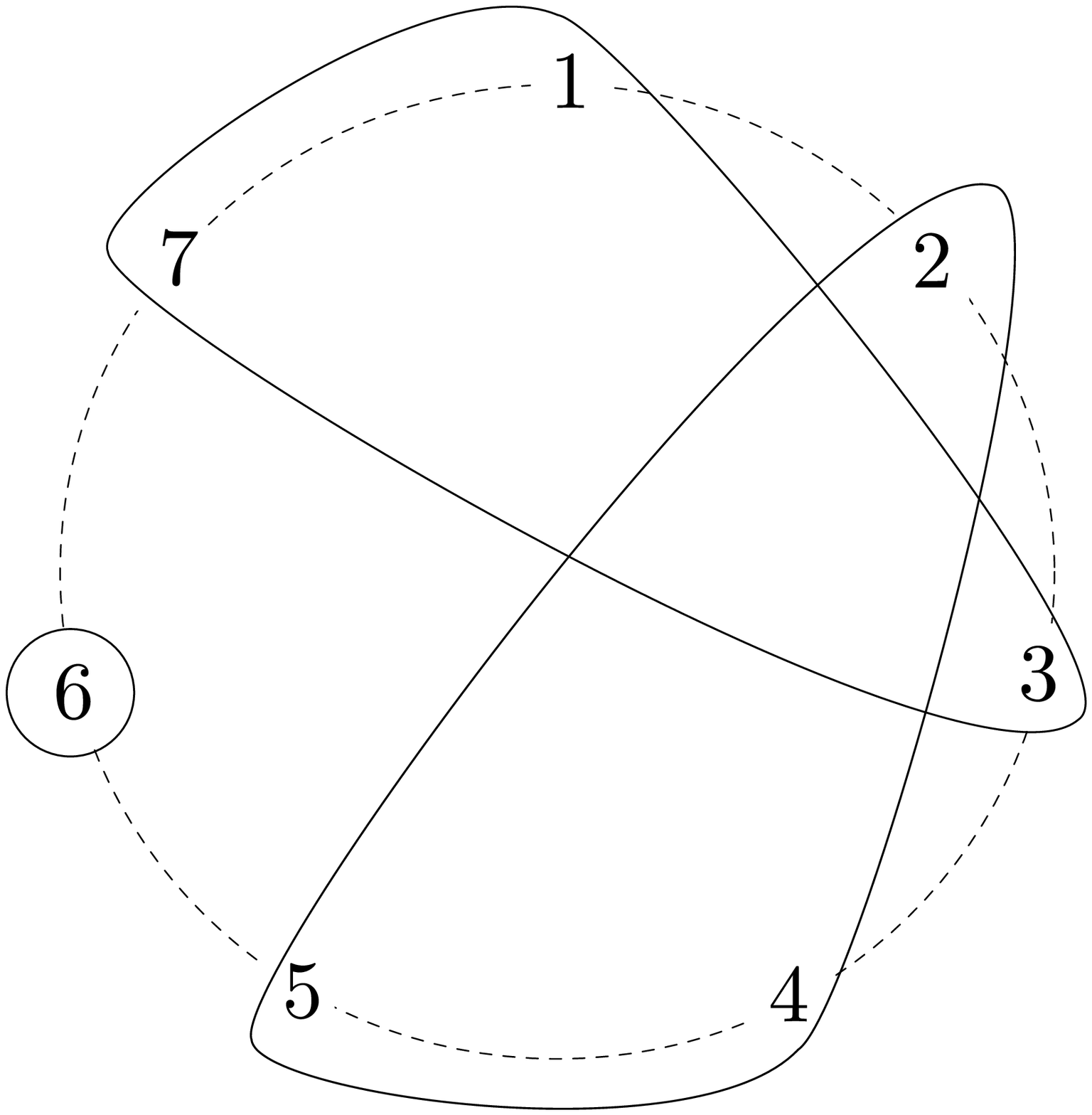}
     \caption{A crossing partition}\label{fig:NCP2}
   \end{subfigure}
\caption{Non-crossing and crossing partition of $\{1, \dots, 7\}$} \label{fig:NCP}
\end{figure}

For this reason we will write $\NC(W, \gamma)$ for the lattice of generalized non-crossing partitions $[I, \gamma]$, for a finite Coxeter group $W$ and a Coxeter element $\gamma$.

\begin{Lemma}
	For two Coxeter elements $\gamma_1, \gamma_2$ in $W$ it holds that
	\[
		\NC(W, \gamma_1) \cong \NC(W, \gamma_2)
	\]
	is a poset-isomorphism.
\end{Lemma}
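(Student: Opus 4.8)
The plan is to exploit the fact, noted earlier in the excerpt, that the Coxeter elements of $W$ form a single conjugacy class. Hence there exists some $u \in W$ with $\gamma_2 = u \gamma_1 u^{-1}$, and I would use conjugation by $u$ to manufacture the isomorphism. Concretely, define $c_u \colon W \to W$ by $c_u(w) := u w u^{-1}$. The strategy is to first show that $c_u$ is an order-automorphism of the \emph{entire} poset $(W, \leq)$, and then invoke the elementary fact that an order-automorphism restricts to an isomorphism between corresponding intervals. This yields a poset-isomorphism from $\NC(W, \gamma_1) = [I, \gamma_1]$ onto $[I, u \gamma_1 u^{-1}] = [I, \gamma_2] = \NC(W, \gamma_2)$.

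To see that $c_u$ preserves the reflection order, I would simply unwind the definition of $\leq$. For $v, w \in W$, the relation $v \leq w$ means $\ell(w) = \ell(v) + \ell(v^{-1} w)$. Applying $c_u$ and using the identity $(u v u^{-1})^{-1} (u w u^{-1}) = u (v^{-1} w) u^{-1}$, the relation $c_u(v) \leq c_u(w)$ reads $\ell(u w u^{-1}) = \ell(u v u^{-1}) + \ell(u (v^{-1} w) u^{-1})$. The crucial input here is Lemma~\ref{Lem:conj}: since reflection length is a conjugacy invariant, each of these three terms equals its un-conjugated counterpart, so the displayed equation is literally identical to the one defining $v \leq w$. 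As $c_u$ is a bijection whose inverse $c_{u^{-1}}$ is order-preserving by the same computation, $c_u$ is an order-automorphism of $(W, \leq)$.

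It then remains to record the routine poset fact that any order-isomorphism $\phi$ carries an interval $[a, b]$ bijectively onto $[\phi(a), \phi(b)]$: if $a \leq x \leq b$ then $\phi(a) \leq \phi(x) \leq \phi(b)$, and applying $\phi^{-1}$ gives the reverse containment. Instantiating this with $\phi = c_u$, $a = I$, $b = \gamma_1$, and using $c_u(I) = I$ together with $c_u(\gamma_1) = \gamma_2$, produces the desired isomorphism $\NC(W, \gamma_1) \to \NC(W, \gamma_2)$.

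I expect no genuine obstacle in this argument. The only step that carries any content is the verification that conjugation respects $\leq$, and that content is entirely absorbed into the conjugacy invariance of reflection length supplied by Lemma~\ref{Lem:conj}. The sole prerequisite to check is the existence of a conjugator $u$, which is precisely the statement that the Coxeter elements lie in one conjugacy class.
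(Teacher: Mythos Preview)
Your proof is correct and follows essentially the same approach as the paper: both use conjugation by an element $u$ with $\gamma_2 = u\gamma_1 u^{-1}$ and rely on Lemma~\ref{Lem:conj} to verify that this conjugation preserves the reflection order. The only cosmetic difference is that you first establish $c_u$ as an order-automorphism of all of $(W,\leq)$ and then restrict to the interval, whereas the paper verifies membership in $\NC(W,\gamma_2)$ and order-preservation directly via a chain of equivalences.
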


\begin{proof}
	
	For two Coxeter elements $\gamma_1, \gamma_2 \in W$, there is $v \in W$ such that $\gamma_2 = v \gamma_1 v^{-1}$, since any two Coxeter elements are conjugate.
	Then, for $w \in W$ it holds
	\begin{alignat*}{3}
		& \qquad &w &\in \NC(W, \gamma_1)\\
		&\iff \qquad  &w &\leq \gamma_1\\
	 	&\iff \qquad  &\ell(\gamma_1) &= \ell(w) + \ell(w^{-1}\gamma_1)\\
	 	&\stackrel{\text{\ref{Lem:conj}}}{\iff} \qquad  &\ell(v \gamma_1 v^{-1}) &= \ell(v w v^{-1}) + \ell(v w^{-1} v^{-1} v \gamma_1 v^{-1})\\
	 	&\iff \qquad &\ell(\gamma_2) &= \ell(v w v^{-1}) + \ell((v w v^{-1})^{-1} \gamma_2)\\
	 	&\iff \qquad &vwv^{-1} &\leq \gamma_2\\
	 	&\iff \qquad &vwv^{-1} &\in \NC(W,\gamma_2)
	\end{alignat*}
	
	Thus, conjugation with $v$ maps the elements of $\NC(W, \gamma_1)$ bijectively onto $\NC(W, \gamma_2)$. That this is indeed order-preserving, can be shown in the exact same manner. Since the same holds for the inverse – conjugation with $v^{-1}$ – the lemma is proven.
	
\end{proof}

Because of the previous lemma, we know that the isomorphism type of the non-crossing partition lattice is independent of the choice of a Coxeter element. Therefore we will only refer to it as $\NC(W)$.\\

The following results relate the group structure of $W$ to the poset structure on $W$ given by the reflection order and are taken from \cite{BRA01}, in which Brady constructs the $K(\pi,1)$s for the braid groups.

\begin{Lemma}\label{Lem:3.9}

	Let $u,w \in W$ with $u \leq w$. Then $u^{-1}w \leq w$ and $w u^{-1} \leq w$.

\end{Lemma}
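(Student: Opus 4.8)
The plan is to unfold the definition of the reflection order in each direction and reduce both claims to the conjugacy invariance of reflection length from Lemma~\ref{Lem:conj}. The only piece of data I have is the hypothesis $u \leq w$, which by definition reads $\ell(w) = \ell(u) + \ell(u^{-1}w)$; this single identity is what I will feed into the two computations.

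For the first claim $u^{-1}w \leq w$, I would write out what the definition demands: I must check that $\ell(w) = \ell(u^{-1}w) + \ell\bigl((u^{-1}w)^{-1}w\bigr)$. The key computation is $(u^{-1}w)^{-1}w = w^{-1}uw$, which is a conjugate of $u$. Applying Lemma~\ref{Lem:conj} gives $\ell(w^{-1}uw) = \ell(u)$, so the required right-hand side collapses to $\ell(u^{-1}w) + \ell(u)$, and this equals $\ell(w)$ precisely by the hypothesis. Hence $u^{-1}w \leq w$.

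For the second claim $wu^{-1} \leq w$, the definition requires $\ell(w) = \ell(wu^{-1}) + \ell\bigl((wu^{-1})^{-1}w\bigr)$. Here $(wu^{-1})^{-1}w = u$, so the target reduces to showing $\ell(w) = \ell(wu^{-1}) + \ell(u)$. The observation that makes this work is that $wu^{-1} = w\,(u^{-1}w)\,w^{-1}$ is a conjugate of $u^{-1}w$, so Lemma~\ref{Lem:conj} gives $\ell(wu^{-1}) = \ell(u^{-1}w)$. Substituting the hypothesis in the form $\ell(u^{-1}w) = \ell(w) - \ell(u)$ then closes the argument.

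I do not expect a genuine obstacle here: the whole lemma is bookkeeping once the two conjugacy relations $(u^{-1}w)^{-1}w = w^{-1}uw$ (a conjugate of $u$) and $wu^{-1} = w(u^{-1}w)w^{-1}$ (a conjugate of $u^{-1}w$) are spotted. The mild subtlety worth flagging is that the reflection order is \emph{not} defined by $\ell(w) = \ell(u) + \ell(wu^{-1})$ but via the left quotient $u^{-1}w$, so one has to be careful to compute the correct quotient element in each case rather than assume symmetry; the conjugacy invariance is exactly what repairs this asymmetry.
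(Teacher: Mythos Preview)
Your proof is correct and follows essentially the same route as the paper: both arguments unfold the definition of $\leq$ and close the two required length identities via the conjugacy invariance of $\ell$ (Lemma~\ref{Lem:conj}), using that $(u^{-1}w)^{-1}w = w^{-1}uw$ is conjugate to $u$ and that $wu^{-1}$ is conjugate to $u^{-1}w$. The only cosmetic difference is that the paper introduces the abbreviations $\alpha = u^{-1}w$, $\beta = wu^{-1}$ and obtains $\ell(\beta) = \ell(\alpha)$ by conjugating with $u$ rather than with $w$, which is immaterial.
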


\begin{proof}

	Suppose $u \leq w$ and let $\alpha = u^{-1}w$ and $\beta = w u^{-1}$. Then we have $\ell (w) = \ell (u) + \ell (\alpha)$. Since reflection length is a conjugacy invariant, this implies $\ell (w) = \ell (w^{-1} u w) + \ell (\alpha) = \ell (\alpha^{-1} w) + \ell (\alpha)$. Therefore, $\alpha \leq w$.
	
	For the second inequality note that $\ell (\alpha) = \ell (u \alpha u^{-1}) = \ell (u u^{-1}w u^{-1}) = \ell (\beta)$ and $\ell (u) = \ell (u w^{-1} w) = \ell (\beta^{-1} w)$. This gives $\ell (w) = \ell (u) + \ell (\alpha) = \ell (\beta^{-1} w) +  \ell (\beta)$, which is by definition $\beta \leq w$.

\end{proof}

\begin{Lemma}\label{Lem:3.10}

	Let $u,v,w \in W$ with $u \leq v \leq w$. Then $u^{-1}v \leq u^{-1}w$ and $v^{-1} w \leq u^{-1} w$.

\end{Lemma}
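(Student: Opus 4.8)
The plan is to unfold the definition of the reflection order in each of the two claimed inequalities, reduce each to a single length identity, and then verify that identity using only the additivity of reflection length already guaranteed by the hypothesis $u \leq v \leq w$. First I would record the three length equations at our disposal: from $u \leq v$ we have $\ell(v) = \ell(u) + \ell(u^{-1}v)$; from $v \leq w$ we have $\ell(w) = \ell(v) + \ell(v^{-1}w)$; and by transitivity $u \leq w$, so $\ell(w) = \ell(u) + \ell(u^{-1}w)$. Equivalently, $\ell(u^{-1}v) = \ell(v) - \ell(u)$, $\ell(v^{-1}w) = \ell(w) - \ell(v)$ and $\ell(u^{-1}w) = \ell(w) - \ell(u)$.

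For the first inequality $u^{-1}v \leq u^{-1}w$, the definition of the reflection order demands $\ell(u^{-1}w) = \ell(u^{-1}v) + \ell\big((u^{-1}v)^{-1}u^{-1}w\big)$. Here the cofactor simplifies to $(u^{-1}v)^{-1}u^{-1}w = v^{-1}u\,u^{-1}w = v^{-1}w$, so the required identity is exactly $\ell(u^{-1}w) = \ell(u^{-1}v) + \ell(v^{-1}w)$. Substituting the three recorded values turns this into $\ell(w)-\ell(u) = (\ell(v)-\ell(u)) + (\ell(w)-\ell(v))$, which holds trivially. This part is purely arithmetic.

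For the second inequality $v^{-1}w \leq u^{-1}w$, unfolding the definition requires $\ell(u^{-1}w) = \ell(v^{-1}w) + \ell\big((v^{-1}w)^{-1}u^{-1}w\big)$, where now the cofactor is $(v^{-1}w)^{-1}u^{-1}w = w^{-1}v\,u^{-1}w = w^{-1}(vu^{-1})w$, a conjugate of $vu^{-1}$ by $w$. This is the one place where more than bookkeeping is needed: I would invoke conjugacy-invariance (Lemma~\ref{Lem:conj}) to replace $\ell(w^{-1}(vu^{-1})w)$ by $\ell(vu^{-1})$, and then note, exactly as in the proof of Lemma~\ref{Lem:3.9}, that $\ell(vu^{-1}) = \ell(u^{-1}v)$ since $vu^{-1} = u(u^{-1}v)u^{-1}$ is again a conjugate. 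With $\ell(vu^{-1}) = \ell(u^{-1}v) = \ell(v)-\ell(u)$ in hand, the identity reduces once more to $\ell(w)-\ell(u) = (\ell(w)-\ell(v)) + (\ell(v)-\ell(u))$, which holds.

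I expect the second inequality to be the only real obstacle, concentrated entirely in correctly simplifying the cofactor $w^{-1}(vu^{-1})w$ and recognizing, via conjugacy-invariance, that its reflection length equals $\ell(u^{-1}v)$; once that is established, both halves collapse to the same elementary arithmetic in the three recorded length equations.
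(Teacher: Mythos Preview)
Your proof is correct and follows essentially the same argument as the paper: both reduce the first inequality to $\ell(u^{-1}w) = \ell(u^{-1}v) + \ell(v^{-1}w)$ via the three recorded length identities. The only cosmetic difference is that for the second inequality the paper invokes Lemma~\ref{Lem:3.9} as a black box (from $\alpha \leq \alpha\beta$ conclude $\beta = \alpha^{-1}(\alpha\beta) \leq \alpha\beta$), whereas you unpack that lemma's conjugacy-invariance step inline---same content, different packaging.
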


\begin{proof}
	
	Let $\alpha = u^{-1} v$ and $\beta = v^{-1} w$. From $u \leq v$ and $v \leq w$ we get
	\begin{align*}
				v = u \alpha &\text{\quad with \quad } \ell(v) = \ell(u) + \ell(\alpha)\\
				w = v \beta &\text{\quad with \quad } \ell(w) = \ell(v) + \ell(\beta).
	\end{align*}

	From this we get $\ell (w) = \ell(u) + \ell(\alpha) + \ell(\beta)$. Also, we have $u \leq w$ which gives $w = u \alpha \beta$ with $\ell(w) = \ell(u) + \ell(\alpha \beta)$.
	We conclude that $\ell(\alpha \beta) = \ell(\alpha) + \ell(\beta)$, which is by definition $\alpha \leq \alpha \beta$.
	By Lemma \ref{Lem:3.9} we then have $\beta \leq \alpha \beta$. Thus, by definition of $\alpha$ and $\beta$,
	it holds $u^{-1}v = \alpha \leq \alpha \beta = u^{-1} v v^{-1} w = u^{-1}w$ and $v^{-1}w = \beta \leq \alpha \beta = u^{-1} v v^{-1} w = u^{-1} w$. Thus we have established $u^{-1} v \leq u^{-1}w$ and $v^{-1} w \leq u^{-1} w$, as desired.
	
\end{proof}

The following lemma will be used in Lemma~\ref{Lem:cancpreleft} and \ref{Lem:cancpreright} to help establish cancellation properties in a semigroup we are about to define in Section~\ref{Sec:PosetGr}. It shows that the intuition of the structure of a lattice can in fact be transferred to the group structure on the lattice $\NC(W)$.

\begin{Lemma}\label{Lem:3.11}

	Let $u,v,w \in \NC(W)$ and define the elements $a,b,c,d,e,f,g,h,i \in \NC(W)$ by the following equations
	\[
		u \join v = ua = vb, \qquad
		v \join w = vc = wd, \qquad
		u \join w = ue = wf
	\]
	and
	\[
		u \join v \join w = (u \join v ) g = (v \join w) h =  (u \join w) i.
	\]
	Then it holds
	\[
		a \join e = a g = e i, \qquad
		b \join c = b g = c h, \qquad
		d \join f = d h = f i.
	\]
	
\end{Lemma}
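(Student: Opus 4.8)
The plan is to reduce the entire statement to a single structural fact: for $x \le z$ in $\NC(W)$, left multiplication by $x^{-1}$ is a poset isomorphism from the interval $[x,z]$ onto $[I, x^{-1}z]$. Granting this, the lemma follows almost formally. First observe that the three identities asserting equality of two \emph{products} are immediate by telescoping: substituting the defining equations gives $ag = u^{-1}(u\join v)(u\join v)^{-1}(u\join v\join w) = u^{-1}(u\join v\join w)$ and likewise $ei = u^{-1}(u\join v\join w)$, so $ag = ei$; the same cancellation yields $bg = ch = v^{-1}(u\join v\join w)$ and $dh = fi = w^{-1}(u\join v\join w)$. Thus it remains only to prove the three \emph{join} identities $a\join e = ag$, $b\join c = bg$ and $d\join f = dh$.

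Next I would establish the structural fact. Its forward half is exactly Lemma~\ref{Lem:3.10}: if $x \le y_1 \le y_2 \le z$ then $x^{-1}y_1 \le x^{-1}y_2$, so $x^{-1}(\cdot)$ is order-preserving on $[x,z]$ and visibly lands in $[I, x^{-1}z]$; being left multiplication it is injective. The content is the converse. Given $p$ with $p \le x^{-1}z$, I would show $x \le xp \le z$, so that $xp$ is the required preimage and the inverse map respects order. The key computation is $\ell(xp) = \ell(x) + \ell(p)$: subadditivity (Lemma~\ref{Lem:subad}) gives $\le$, while combining $\ell(z) = \ell(x) + \ell(x^{-1}z)$ (from $x \le z$), $\ell(x^{-1}z) = \ell(p) + \ell(p^{-1}x^{-1}z)$ (from $p \le x^{-1}z$) and the subadditive bound $\ell(z) \le \ell(xp) + \ell(p^{-1}x^{-1}z)$ gives $\ge$. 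From $\ell(xp) = \ell(x) + \ell(p)$ one reads off both $x \le xp$ and $xp \le z$; an analogous one-line length comparison (using $\ell(xp)=\ell(x)+\ell(p)$ and $\ell(xq)=\ell(x)+\ell(q)$ for $p \le q$ in $[I,x^{-1}z]$) shows the inverse is order-preserving as well. Hence $x^{-1}(\cdot)$ is a poset isomorphism of intervals.

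Finally I would apply this device three times, using that a poset isomorphism preserves all existing joins and that, in a lattice, the join of two elements of an interval computed inside the interval agrees with the ambient join. For the first identity take $x=u$ and $z = u\join v\join w$: the elements $u\join v$ and $u\join w$ lie in $[u,\,u\join v\join w]$, their join in $\NC(W)$ is $(u\join v)\join(u\join w) = u\join v\join w$, and applying $u^{-1}(\cdot)$ sends this to $a\join e = u^{-1}(u\join v\join w) = ag$. Taking instead $x=v$ with the pair $u\join v,\,v\join w$ gives $b\join c = v^{-1}(u\join v\join w) = bg$, and $x=w$ with the pair $v\join w,\,u\join w$ gives $d\join f = w^{-1}(u\join v\join w) = dh$.

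The main obstacle is the converse half of the interval-isomorphism lemma — surjectivity of $x^{-1}(\cdot)$ onto $[I,x^{-1}z]$ together with order-preservation of its inverse — which rests entirely on the length identity $\ell(xp)=\ell(x)+\ell(p)$ for $p \le x^{-1}z$. Once that is secured the remainder is purely formal, needing only that order isomorphisms preserve joins and the elementary identities $(u\join v)\join(u\join w) = (u\join v)\join(v\join w) = (v\join w)\join(u\join w) = u\join v\join w$.
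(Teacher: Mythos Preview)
Your proof is correct and takes a genuinely different route from the paper. The paper argues the product identities $ag=ei$ etc.\ by the same cancellation you use, establishes $a\le ag$ and $e\le ei$ via Lemma~\ref{Lem:3.10}, and then for the reverse inequality assumes $a\join e < ag$, names $a\join e = ak = el$ and an element $x$ with $akx=ag$, and derives a contradiction by showing that $uak$ would be an upper bound for $u,v,w$ strictly below $u\join v\join w$. This is an ad~hoc minimality argument carried out directly in the ambient lattice.

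Your approach instead isolates the structural fact that $x^{-1}(\cdot)\colon[x,z]\to[I,x^{-1}z]$ is a poset isomorphism (forward direction by Lemma~\ref{Lem:3.10}, surjectivity and order-preservation of the inverse via the length identity $\ell(xp)=\ell(x)+\ell(p)$ for $p\le x^{-1}z$), and then transports the ambient join $(u\join v)\join(u\join w)=u\join v\join w$ along the isomorphism with $x=u$. This is cleaner and more conceptual: the interval-isomorphism lemma is a standard and reusable fact about the absolute order, and once it is in hand all three join identities fall out symmetrically without any contradiction argument. The paper's proof, by contrast, is more self-contained in that it does not need to state and prove an auxiliary lemma, but the contradiction step is somewhat opaque and specific to this configuration. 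Your length computation for the converse half is exactly right and is the only place where real work happens.
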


\begin{proof}
	The situation is depicted in Figure~\ref{Fig:Lem3.11}. 
	
	\begin{figure}[h!]
		\centering
		\includegraphics[width=0.52\textwidth]{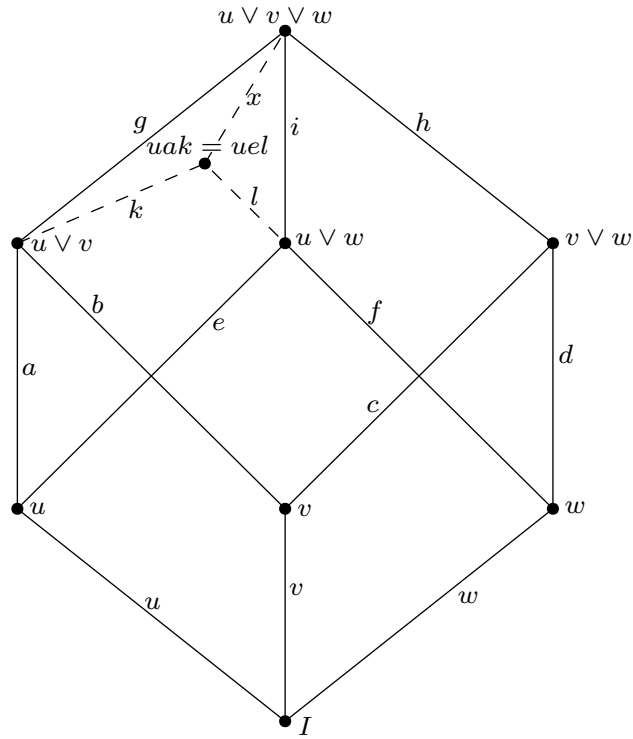}
		\caption{Hasse diagram of the relevant subset}
		\label{Fig:Lem3.11}
	\end{figure}
	
	That the elements $a,b,c,d,e,f,g,h,i \in \NC(W)$ are indeed uniquely determined by the equations above follows from the group structure. To see this, note that if $u a = u a' = u \join v$, then we have $u^{-1} u a = u^{-1} u a'$ and hence $a = a'$. Therefore, there is exactly one element $a = u^{-1} (u \join v)$ with $ua = u \join v$.\\
	
	We only prove the first equation, the other two are similar.
	We know that $u \join v \join w = (u \join v) g = u a g $ and  $u \join v \join w = (u \join w) i =  u e i$.
	Thus, $u a g = u e i$, which is equivalent to $a g = e i$.
	
	What is left to show is that this is exactly the join of $a$ and $e$.
	For this, note that by definition of the join we have $u \leq u \join v = u a \leq (u \join v) \join w = u a g$.
	Then we can apply Lemma \ref{Lem:3.10} to deduce that $u^{-1} u a \leq u^{-1} u a g$, which is $a \leq a g = e i$.
	In the same way we get $e \leq e i = a g$. From this follows that $a \join e \leq ag =ei$.
	
	Now assume, for the sake of contradiction, that $a \join e < ag = ei$, say $a \join e = ak = el$ and define the elements $x,y$ by the equation
	$akx=ag = ei = elx$. This is again depicted in Figure~\ref{Fig:Lem3.11}. Then it holds $a \leq ak \leq akx$, from which with Lemma~\ref{Lem:3.10} follows that $k \leq kx$. This is equivalent to $\ell(kx) = \ell(k) + \ell(x)$. We want to show that then $uak = uel$ is a smaller upper bound for $u , v,w $ than $uag=uei$, which contradicts the minimality of the join of $u,v,w$.
	
	We know that $u \leq ua \leq uag = uakx$ and therefore $\ell(uakx) = \ell(ua) + \ell(kx)$. From the sub-additivity of the reflection length it follows that
	\begin{align*}
		\centering
		\ell(uakx) &\leq \ell(uak) + \ell(x)\\
		\iff \ell(uak) &\geq \ell(uakx) - \ell(x)\\
		\iff \ell(uak) &\geq \ell(ua) + \ell(kx) - \ell(x)\\
		\iff \ell(uak) &\geq \ell(ua) + \ell(k) + \ell(x) - \ell(x)\\
		\iff \ell(uak) &\geq \ell(ua) + \ell(k)
	\end{align*}
		
	Since $\ell(uak) \leq \ell(ua) + \ell(k)$ is true by the sub-additivity of $\ell$, we have established $ua \leq uak$. Therefore $uak$ is an upper bound for $u a = u \join v$ and with that, one for $u$ and $v$.
	Applying this argument onto $uelx$ instead of $uakx$ gives $u,w \leq uel = uak$. This contradicts the minimality of $u \join v \join w = uag =uei$ and thus we have shown $a\join e = ag = ei$.

\end{proof}

\newpage

\section{Poset Groups}\label{Sec:PosetGr}

From now on let $W$ be any finite Coxeter group and $\gamma$ a fixed but arbitrary Coxeter element. Knowing that $\NC(W)$ forms a lattice, we will construct the so called poset group. We will follow the construction of Brady from \cite{BRA01} which is also used in \cite{BW02} from Brady and Watt.

\begin{Definition}
	
	Let the \emph{poset group} $\Gamma = \Gamma(W,\gamma)$ be the following group.\\
	\noindent
	For each element $w \in \NC(W) \backslash I$ we take one formal generator $\bracket{w}$. The group relations are of the form $\bracket{w_1} \bracket{w_2} = \bracket{w_3}$ whenever $w_1 w_2 = w_3$ in $W$ and $\ell(w_1) + \ell(w_2) = \ell(w_3)$.
	
\end{Definition}

In other words, for every relation $w_1 \leq w_3$ in the lattice $\NC(W)$ we have a relation $\bracket{w_1} \bracket{w_1^{-1} w_3} = \bracket{w_3}$ in $\Gamma$.

We do not explicitly define what $\bracket{I}$ is, but it will sometimes occur when we consider words in the generators which represent elements of $\Gamma$. In this case we regard it as the empty word.\\

The group constructed here is exactly the group which Bessis called $\mathbf{G}(P_c)$ in \cite{BES03}.\\

Starting with this poset group $\Gamma = \Gamma (W,\gamma)$ we construct a $K(\Gamma,1)$. To obtain a $K(A(W),1)$ for the Artin group, we will then show that the related Artin group $A(W)$ is isomorphic to the poset group $\Gamma$. Therefore, the following theorem, which is taken from \cite{BES03}, is crucial for our construction. Bessis uses a case-by-case proof, which is partially achieved by computer. Up to now, no case free proof is known for this fact.

\begin{Theorem}[{\cite[Theorem~2.2.5]{BES03}}]\label{thm:iso}
	Let $W$ be a finite Coxeter group and $\gamma \in W$ a Coxeter element. Then
	\[
		\Gamma(W,\gamma) \cong A(W).
	\]
\end{Theorem}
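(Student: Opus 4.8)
The plan is to construct mutually inverse homomorphisms between $A(W)$ and $\Gamma = \Gamma(W,\gamma)$, after first cutting the enormous generating set of $\Gamma$ down to a manageable one. My first step is to observe that $\Gamma$ is already generated by its \emph{atoms}, i.e. by the generators $\bracket{t}$ for reflections $t \in T$ with $t \leq \gamma$. Indeed, $\NC(W)$ is a pure poset whose rank function is the reflection length, so any $w \in \NC(W)$ sits atop a maximal chain $I = w_0 \lessdot w_1 \lessdot \cdots \lessdot w_k = w$; each cover $w_{j-1}\lessdot w_j$ contributes a reflection $t_j := w_{j-1}^{-1}w_j \leq \gamma$ together with a defining relation $\bracket{w_{j-1}}\bracket{t_j} = \bracket{w_j}$, whence $\bracket{w} = \bracket{t_1}\cdots\bracket{t_k}$. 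Consequently every defining relation of $\Gamma$ is a consequence of the \emph{rank-two} relations $\bracket{t}\bracket{t'} = \bracket{\sigma}\bracket{\sigma'}$ arising from the various length-two factorizations of an element $w \leq \gamma$ with $\ell(w)=2$; these are precisely the relations read off from the intervals $\bracket{I,w}$ of rank two.

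Using the lemma that $\NC(W)$ is independent of the chosen Coxeter element, I would then fix $\gamma = s_1\cdots s_n$, so that each simple reflection $s_i$ becomes an atom of $\NC(W,\gamma)$. This makes the candidate isomorphism transparent: define $\phi\colon A(W)\to\Gamma$ on generators by $s_i \mapsto \bracket{s_i}$, and define a candidate inverse $\psi\colon\Gamma\to A(W)$ by sending each atom $\bracket{t}$ to a fixed lift of $t$ to $A(W)$ (obtained by expressing the reflection $t$ through conjugates of the $s_i$). Granting well-definedness, $\phi$ and $\psi$ are mutually inverse on the chosen generating sets — $\psi\phi$ fixes each $s_i$, and $\phi\psi$ fixes each atom, which by the first step generate $\Gamma$ — so the isomorphism follows.

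The crux, and the step I expect to be the genuine obstacle, is precisely the two well-definedness checks. For $\phi$ one must verify that the braid words $\underbrace{\bracket{s_i}\bracket{s_j}\cdots}_{m_{ij}}$ and $\underbrace{\bracket{s_j}\bracket{s_i}\cdots}_{m_{ij}}$ coincide in $\Gamma$; for $\psi$ one must verify that every rank-two relation of $\Gamma$ is a consequence of the Artin braid relations. Together these amount to saying that the \emph{dual} presentation of $\Gamma$ (atoms plus rank-two relations) is Tietze-equivalent to the Artin presentation. Here the lattice property of Theorem~\ref{thm:lattice} does the structural heavy lifting: it guarantees that the monoid generated by the atoms under the rank-two relations is a Garside monoid with Garside element $\bracket{\gamma}$, so that $\Gamma$ is its group of fractions, hence torsion-free with solvable word problem. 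What the lattice property does \emph{not} supply is the match between this dual Garside structure and the classical one underlying the Artin presentation; establishing that match is the content of Bessis's Theorem~2.2.5, for which no case-free argument is known and which is carried out type by type (for $A_n, B_n, D_n, I_2(m), F_4, H_3, H_4, E_6, E_7, E_8$), partly by computer. Accordingly I would invoke Bessis's result at this final step rather than reproduce the case analysis.
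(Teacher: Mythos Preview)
The paper does not prove this theorem at all: it is stated as a black-box citation of Bessis~\cite{BES03}, with the explicit remark that Bessis's proof is case-by-case and partly computer-assisted, and that no case-free proof is known. So there is no ``paper's own proof'' to compare against.

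Your outline is a reasonable sketch of the architecture of Bessis's argument --- reduce $\Gamma$ to atoms and rank-two relations, set up the two maps $\phi$ and $\psi$, and isolate the well-definedness checks as the decisive step --- and you correctly identify that this decisive step is exactly where the case analysis lives and where you must invoke Bessis anyway. In that sense your proposal and the paper's treatment coincide: both ultimately defer to~\cite{BES03}. One small caution: the claim that $\phi\psi$ fixes each atom is not automatic from the setup you describe, since the lift $\psi(\bracket{t})$ is a word in the $s_i$ and there is no a priori reason $\phi$ sends that word back to $\bracket{t}$ rather than to some other positive word of the same length; this identification is itself part of what the type-by-type verification establishes.
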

\vspace{6mm}

\subsection{A Cancellative Semi Group in the Poset Group}

Since the relations that are used in the definition of $\Gamma$ do not involve inverses of generators, we can use the very same presentation to define a semigroup $\Gamma_+ = \Gamma_+(W,\gamma)$. The following definitions and notions were introduced by Garside in \cite{GAR69}, who used them to give a solution to the conjugacy problem in the braid groups.\\

A \emph{positive word} is a word in the generators of $\Gamma$ that does not involve an inverse. Positive words represent elements of $\Gamma_+$.
Two positive words $A, B$ are \emph{positively equal} if there exists a sequence of positive words $A_0, \dots, A_k$, such that $A_0 = A, A_k = B$ and $A_i$ is obtained from $A_{i-1}$ by replacing one side of a defining relation by the other. If $A$ and $B$ are positively equal we also write $A \doteq B$.

If two words $A$ and $B$ in the generators of $\Gamma$ are identical letter by letter, we write $A \equiv B$. If two positive words are identical, they are also positively equal.\\

The reflection length on elements of $\NC(W)$ can be used to associate a \emph{length} to each generator of $\Gamma$. And since the semigroup $\Gamma_+$ is only defined by relations of the form $\bracket{ w_1} \bracket{w_2} = \bracket{w_3}$ whenever $w_1 w_2 = w_3$ in $W$ and $\ell (w_1) + \ell (w_2) = \ell (w_3)$, the relations in $\Gamma_+$ also preserve the length and we can associate to each positive word $A$ a length $\ell(A)$. For the same reason two positively equal words must have the same length.\\

Our next goal is to establish cancellation properties in $\Gamma_+$ and then show that $\Gamma_+$ embeds in $\Gamma$. Therefor we first need some lemmas.
\vspace*{-1mm}
\begin{Lemma}\label{Lem:distr}
	
	Let $v, b, c \in \NC(W)$ with $v \leq vb, v \leq vc$ and $b \join c = bg = ch$ for $g,h \in \NC(W)$. Then $vb \join vc = vbg = vch$.

\end{Lemma}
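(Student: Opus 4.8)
The plan is to reduce everything to a single compatibility fact: once $v$ sits below an element, left multiplication by $v$ respects the reflection order. The identity $vbg = vch$ is immediate, since $bg = ch$ by hypothesis; so write $m := b \join c = bg = ch$, and it suffices to prove $vb \join vc = vm$. Because $\NC(W)$ is a lattice by Theorem~\ref{thm:lattice}, the join $J := vb \join vc$ exists, and from $v \leq vb \leq J$ we obtain $v \leq J$ by transitivity. I would then establish $J = vm$ by proving $vm \leq J$ and $J \leq vm$ separately.

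For the first inequality I would push the configuration down below $v$. Applying Lemma~\ref{Lem:3.10} to $v \leq vb \leq J$ gives $b = v^{-1}(vb) \leq v^{-1}J$, and likewise $c \leq v^{-1}J$; hence $v^{-1}J$ is an upper bound of $b$ and $c$, so $m = b \join c \leq v^{-1}J$. What remains is to lift this back up, i.e. to deduce $vm \leq J$ from $m \leq v^{-1}J$, and this is the crux, which I would isolate as a sublemma: \emph{if $v \leq w$ and $x \leq v^{-1}w$, then $v \leq vx \leq w$.} This follows purely from sub-additivity (Lemma~\ref{Lem:subad}): expanding the two chain relations gives $\ell(w) = \ell(v) + \ell(x) + \ell(x^{-1}v^{-1}w)$, while sub-additivity gives both $\ell(vx) \leq \ell(v) + \ell(x)$ and $\ell(w) = \ell\big((vx)(x^{-1}v^{-1}w)\big) \leq \ell(vx) + \ell(x^{-1}v^{-1}w)$; combining these forces $\ell(vx) = \ell(v) + \ell(x)$, which is $v \leq vx$, and then $\ell(w) = \ell(vx) + \ell\big((vx)^{-1}w\big)$, which is $vx \leq w$. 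Applying the sublemma with $w = J$ and $x = m$ yields $v \leq vm \leq J$, settling $vm \leq J$.

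For the reverse inequality I would verify that $vm$ is an upper bound of $vb$ and $vc$. The sublemma already gives $v \leq vm$, hence $\ell(vm) = \ell(v) + \ell(m)$; combining this with $\ell(vb) = \ell(v) + \ell(b)$ (from $v \leq vb$) and $\ell(m) = \ell(b) + \ell(b^{-1}m)$ (from $b \leq m$) gives $\ell(vm) = \ell(vb) + \ell\big((vb)^{-1}(vm)\big)$, i.e. $vb \leq vm$, and symmetrically $vc \leq vm$. Therefore $J = vb \join vc \leq vm$, and together with $vm \leq J$ we conclude $J = vm = vbg = vch$.

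The main obstacle is precisely the sublemma. Lemma~\ref{Lem:3.10} only transports the order \emph{downward} along $v^{-1}$, so the real work is showing that left multiplication by $v$ also respects the order \emph{upward}, carrying $[I, v^{-1}J]$ back into $[v,J]$. Left translation is not order-preserving on $(W,\leq)$ in general, so the argument must exploit the tight length factorization supplied by $v \leq J$; everything surrounding it is routine bookkeeping with the defining equalities of the reflection order.
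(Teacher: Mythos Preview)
Your argument is correct and takes a genuinely different route from the paper's. The paper argues by contradiction: it assumes $vb \join vc = vbk = vcm < vbg$, then applies Lemma~\ref{Lem:3.10} to the chains $v \leq vb \leq vbk$ and $v \leq vbk \leq vbg$ to obtain $b \leq bk < bg = b \join c$, and symmetrically $c \leq cm < ch$, producing an upper bound $bk = cm$ of $b$ and $c$ strictly below $b \join c$. Your proof is direct: you push $J = vb \join vc$ down via Lemma~\ref{Lem:3.10} to get $m \leq v^{-1}J$, lift back up via your sublemma to get $vm \leq J$, and then verify $vb, vc \leq vm$ by length arithmetic to obtain $J \leq vm$. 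The sublemma---a partial converse to Lemma~\ref{Lem:3.10}, saying that left multiplication by $v$ carries $[I,v^{-1}w]$ into $[v,w]$ whenever $v \leq w$---is the real content, and isolating it makes the mechanism transparent. A small bonus of your route is that it explicitly establishes that $vbg$ is an upper bound of $vb$ and $vc$; the paper's contradiction argument quietly presupposes $vb \join vc \leq vbg$ when it writes the strict inequality, so your version is in fact slightly more complete.
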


\begin{proof}

	Assume to the contrary that  $vb \join vc = vbk = vcm < vbg = vch$. Let $x$ be such that $vbkx = vbg$. The situation is depicted in Figure~\ref{Fig:Lem4.3}. Then we have $v \leq vb \leq vbk$. By Lemma~\ref{Lem:3.10} we then get $b \leq bk$. Applying Lemma~\ref{Lem:3.10} to $v \leq vbk \leq vbkx = vbg$ gives $bk \leq bkx=bg$ and since $x$ is not the identity we have $bk<bg = b \join c$.
	
	Using the same arguments on $vcm$ we can show that $c \leq cm$ and $cm < ch = b \join c$. But then we have found an element $bk=cm$ which is greater than $b$ and $c$ but strictly less than the join of $b$ and $c$. This contradicts the minimality of the join of $b$ and $c$.
	
	\begin{figure}[h]
		\centering
		\includegraphics[width=0.4\textwidth]{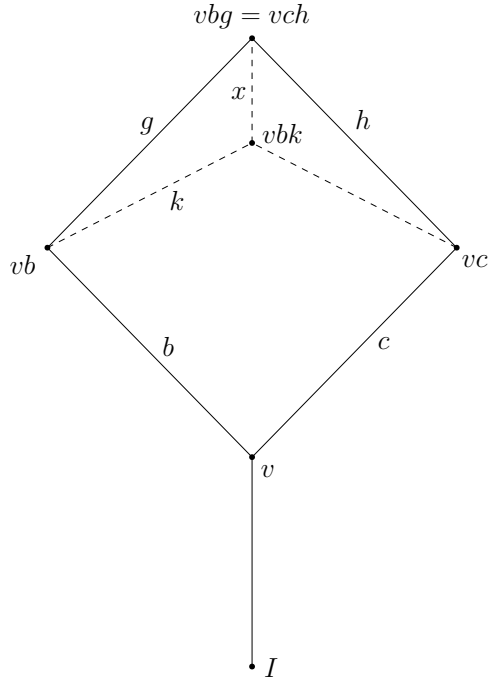}
		\caption{Hasse diagram of the subposet from Lemma~\ref{Lem:distr}}
		\label{Fig:Lem4.3}
	\end{figure}
	
\end{proof}
\vspace*{-4mm}
\begin{Lemma}\label{Lem:cancpreleft}

	Let $u,w \in \NC(W)$ and $A,B$ be positive words in $\Gamma$.
	
	\noindent
	If $\bracket{u} A \doteq \bracket{w} B$ then there exist $e,f \in \NC(W)$ and a positive word $C$, such that \[A \doteq \bracket{e} C, \qquad B \doteq \bracket{f} C\]
	\vspace*{-4mm}
	and $u \join w = ue = wf$.

\end{Lemma}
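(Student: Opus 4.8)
The plan is to argue by a double induction: an outer induction on the total length $L := \ell(\bracket{u}A)$ of the word $\bracket{u}A$ (which equals $\ell(\bracket{w}B)$ and is invariant under $\doteq$), and, for each fixed $L$, an inner induction on the number $N$ of elementary relation-applications in a chosen derivation $\bracket{u}A = X_0, X_1, \dots, X_N = \bracket{w}B$, where each $X_i$ arises from $X_{i-1}$ by replacing one side of a defining relation. Before starting I would record a preliminary observation that does all the work in the base cases: a positive word $D$ satisfies $D \doteq \bracket{x}$ for some $x \in \NC(W)\setminus I$ if and only if its generators multiply in order to $x$ with additive reflection lengths (equivalently their partial products form a chain in $\NC(W)$), and every such word contracts to $\bracket{x}$, so any two of them are positively equal. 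This settles the cases where $A$ or $B$ is empty: if $A$ is empty, then $\bracket{u}\doteq\bracket{w}B$ forces $w \le u$ and $B \doteq \bracket{w^{-1}u}$, and one checks $u\join w = u = ue = wf$ with $e = I$, $f = w^{-1}u$ and $C$ empty; the case of empty $B$ is symmetric, and $N=0$ is immediate with $e=f=I$.

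For the inner step I would examine the first move $X_0 \to X_1$. If it takes place inside $A$, leaving the leading generator $\bracket{u}$ untouched, then $X_1 = \bracket{u}A_1$ with $A \doteq A_1$, and the inner hypothesis applied to $\bracket{u}A_1 \doteq \bracket{w}B$ returns $e,f,C$; since $A \doteq A_1 \doteq \bracket{e}C$, we are done. The next case is a front contraction, where $A = \bracket{a}A'$ and the move fuses $\bracket{u}\bracket{a}\to\bracket{ua}$ (so $u \le ua$); the inner hypothesis applied to $\bracket{ua}A' \doteq \bracket{w}B$ supplies data for the larger first letter $ua$, which I would transfer back to $u$ using $u \le ua \le (ua)\join w$ together with Lemma~\ref{Lem:3.10}. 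A short computation in $W$ shows that $u^{-1}\big((ua)\join w\big)$ factors, with additive lengths, both as $a\cdot e'$ and as $e\cdot s'$, so by the preliminary observation $\bracket{a}\bracket{e'}\doteq\bracket{e}\bracket{s'}$; absorbing $\bracket{s'}$ into the tail yields the required common word $C$, and $ue = u\join w = wf$ follows at once.

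The genuine obstacle is the remaining case, a front expansion $\bracket{u}\to\bracket{u_1}\bracket{u_2}$ with $u_1u_2 = u$ and $u_1 \le u$. Now the leading element shrinks to $u_1$, so the inner hypothesis only reports on $u_1\join w$, whereas I must reconstruct $u\join w$. The saving feature is that $\bracket{u_2}A$ is strictly shorter than $\bracket{u}A$, so the outer hypothesis becomes available and applies to the configuration $\bracket{u_2}A \doteq \bracket{e_1}C$ produced by the inner step, giving a factorization of $A$ governed by $u_2\join e_1$. The heart of the matter is then to glue the two pieces of join data, one about $u_1\join w$ and one about $u_2\join e_1$, into a single statement about $u\join w = (u_1u_2)\join w$. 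This is precisely the configuration controlled by Lemma~\ref{Lem:3.11} (the compatibility of the three pairwise joins of $u_1,u_2,w$ with their triple join) and by Lemma~\ref{Lem:distr} (translation-invariance of joins), and pushing this bookkeeping through while keeping every intermediate factorization length-additive, so that the preliminary observation can repeatedly collapse two-letter words to single generators, is where the real effort lies.
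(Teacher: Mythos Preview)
Your proposal is correct and follows the same double-induction scaffold as the paper, but the inductive step is organized differently. The paper does \emph{not} perform a case analysis on the first move. Instead, it picks an arbitrary intermediate word $\bracket{v}D$ in the derivation (with $v\neq I$), applies the inner hypothesis to each of the two shorter derivations $\bracket{u}A\doteq\bracket{v}D$ and $\bracket{v}D\doteq\bracket{w}B$, and then applies the outer hypothesis to the resulting equality $\bracket{b}E\doteq\bracket{c}F$, which has strictly smaller length because $v\neq I$. The three pieces of join data, governed by the triple $u,v,w$, are then fused using Lemma~\ref{Lem:distr} and, essentially, Lemma~\ref{Lem:3.11}. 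So in the paper, Lemma~\ref{Lem:3.11} does real work, whereas in your front-expansion case it is not actually needed: setting $v=u_1$, $b=u_2$, $c=e_1$ in Lemma~\ref{Lem:distr} gives $u\vee(u_1\vee w)=u_1(u_2\vee e_1)$, and since $u_1\le u$ one has $u\vee(u_1\vee w)=u\vee w$ by pure lattice reasoning; the required length-additivity of $f_1f_2$ then follows from Lemma~\ref{Lem:3.10} applied to $w\le u_1\vee w\le u\vee w$. The advantage of the paper's split-in-the-middle argument is uniformity (no case distinction on the type of move), at the cost of invoking the more intricate Lemma~\ref{Lem:3.11}; your first-move analysis trades that lemma for three separate cases, each of which is individually lighter.
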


\begin{proof}
	
	Following the proof of Brady in \cite{BRA01}, who himself refers to Garside \cite{GAR69}, we do double induction on firstly, the length of $\bracket{u} A$ – which is the same as the length of $\bracket{w} B$ – and secondly on the number of substitutions in the sequence of positive words which realizes $\bracket{u} A \doteq \bracket{w} B$. We denote the length of $\bracket{u} A$ as $n$ and the number of substitutions as $m$.\\
	
	For the induction base we will prove that the lemma holds for length $n=1$ and an arbitrary number of substitutions $m \in \N$. And we will also show that the lemma holds for every word length $n \in \N$ if there is only one substitution in the sequence of positive words which realizes $\bracket{u} A \doteq \bracket{w} B$.
	After that we are prepared to do the induction step.
	
	For the induction step we fix arbitrary $n_0,m_0 \in \N$ with $n_0,m_0 > 1$. We then assume that the lemma holds for length $n_0$ if the number of substitutions in the sequence is $m < m_0$. And we assume that if the length of $\bracket{u} A$ is $n < n_0$ that it holds for every number of substitutions $m$. We then show that it also holds for length $n_0$ and $m_0$ substitutions.
	
	An Illustration of the structure of the double induction is depicted in Figure~\ref{fig:sketch}.\\
	
	\begin{figure}[h!]
		\centering
		\begin{subfigure}{0.3\linewidth}
			\centering
     		\includegraphics[scale=0.7]{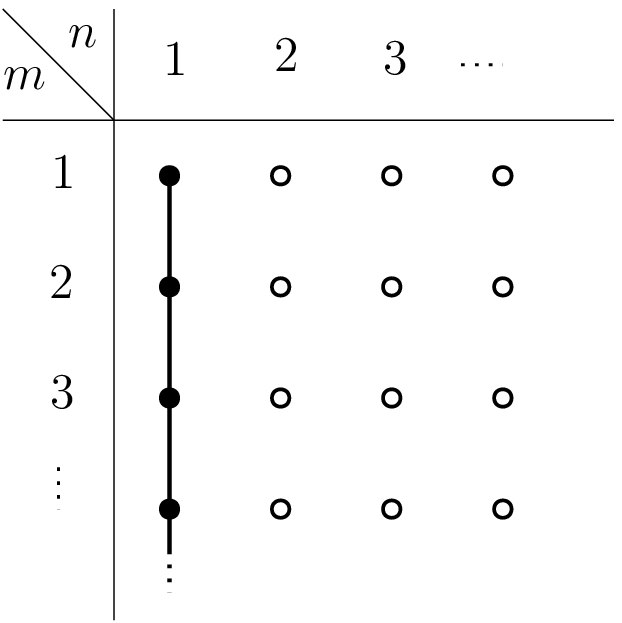}
     		\caption{First part of the induction base}\label{fig:sketch1}
     	\end{subfigure}
     	\hspace*{2mm}
     	\begin{subfigure}{0.3\linewidth}
     		\centering
     		\includegraphics[scale=0.7]{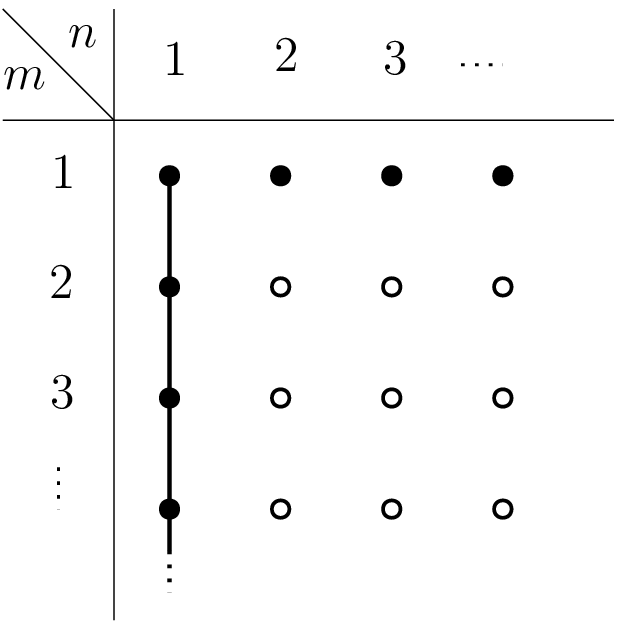}
     		\caption{Second part of the induction base}\label{fig:sketch2}
     	\end{subfigure}
     	\hspace*{2mm}
     	\begin{subfigure}{0.3\linewidth}
			\centering
     		\includegraphics[scale=0.7]{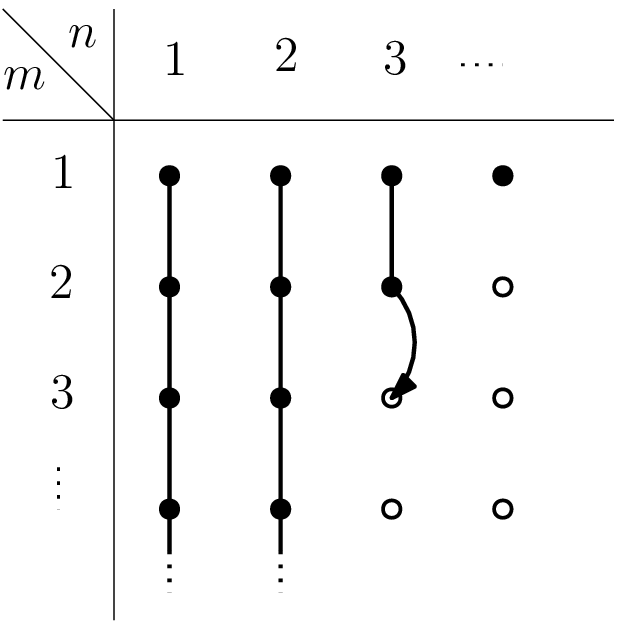}
     		\caption{Induction step for $n_0=m_0=3$}\label{fig:sketch3}
     	\end{subfigure}
     	\caption{Illustration of the structure of the proof} \label{fig:sketch}
	\end{figure}
	
	For the first part of the induction base let $n = 1$ and let the realizing sequence contain an arbitrary number of substitutions. Since $\ell (\bracket{u} A) = 1 = \ell (\bracket{w} B)$, both $\bracket{u} A$ and $\bracket{w} B$ consist of only one formal generator of $\Gamma$, which must come from a reflection in $\NC(W)$. Thus, we have $\bracket{u} A = \bracket{t_1}, \bracket{w} B = \bracket{t_2}$ for $t_1,t_2$ reflections in $\NC(W)$.
	
	Now, since $\bracket{t_1} \doteq \bracket{t_2}$, we already have $t_1 = t_2$, because any relation that could be applied on the left side is of the form $\bracket{t_1} = \bracket{w} \bracket{w^{-1} t_1}$ for $w \leq t_1$. But this is only true for $I \leq t_1$ and $t_1 \leq t_1$. Thus, a choice of $e,f \in \NC(W)$ and a positive word $C$ is possible such that the theorem is true for word length $1$.\\
	
	Now we want to show that the theorem holds for arbitrary length of $\bracket{u} A, \bracket{w} B$ when there is only one substitution in the sequence of positive words which realizes $\bracket{u} A \doteq \bracket{w} B$. In this case, we have
	\[
		\bracket{u} A \equiv \bracket{a_0} \bracket{a_1} \dots \bracket{a_k}
	\]
	for $a_0, \dots ,a_k \in \NC(W)$ with $a_0 = u$ and $k \in \N_0$, as well as
	\[
		\bracket{w} B \equiv \bracket{b_0} \bracket{b_1} \dots \bracket{b_{k'}}
	\]
	for $b_0, \dots, b_{k'} \in \NC(W)$ with $b_0 = w$ and $k' \in \N_0$.
	Without loss of generality let $k'=k-1$ and let the single substitution be $\bracket{a_i} \bracket{a_{i+1}} = \bracket{b_i}$ for some $i \in \{0, \dots k-1\}$.
	Then we know that
	\begin{equation}\label{eq:equiv}
		\bracket{a_0} \dots \bracket{a_{i-1}} \bracket{b_i} \bracket{a_{i+2}} \dots \bracket{a_k} \equiv \bracket{b_0} \bracket{b_1} \dots \bracket{b_{k-1}}.
	\end{equation}
	We distinguish two cases.
	
	The first case $i=0$ means that $\bracket{u} \bracket{a_1} = \bracket{w}$. Since this is a relation, it must hold $u \leq w$ with $a_1 = u^{-1}w$ and hence $u \join w = w$. Setting $e = a_1 = u^{-1}w, f = I$ and $C \equiv \bracket{b_1} \dots \bracket{b_{k'}} \equiv \bracket{a_2} \dots \bracket{a_k}$, we have $ue = wf = w = u \join w$ and $A \equiv \bracket{e} C, B \equiv \bracket{f}C$.
	
	In the second case we have $i \geq 1$. Then, it follows from Equation \ref{eq:equiv} that $u=w$. Setting $e=f=I$, we get $ue = wf =u =w = u \join w$.
	But we also know that $A \equiv \bracket{a_1} \dots \bracket{a_k} \doteq \bracket{a_1} \dots \bracket{a_{i-1}} \bracket{b_i} \bracket{a_{i+2}} \dots \bracket{a_k} \equiv \bracket{b_1} \dots \bracket{b_{k-1}} \equiv B$, since $\bracket{a_i} \bracket{a_{i+1}} = \bracket{b_i}$ for some $i \geq 1$. Thus, setting $C \equiv B$ gives $A \doteq \bracket{e} C, B \equiv \bracket{f}C$ and completes the induction base.\\
	
	Now, for the induction step let $n_0$ be the length of $\bracket{u} A$ and $m_0$ be the number of substitutions in the sequence between $\bracket{u} A$ and $\bracket{w} B$ with $n_0, m_0 > 1 $ arbitrary.
	
	Choose some expression $\bracket{v} D$ in the sequence between $\bracket{u} A$ and $\bracket{w} B$ where $v$ is not the identity.
	Then
	\[\bracket{u} A \doteq \bracket{v} D \text{ and } \bracket{v} D \doteq \bracket{w} B\]
	and both realizing sequences use less substitutions than the sequence which realizes $\bracket{u} A \doteq \bracket{w} B$. Therefore we have by induction
	\[A \doteq \bracket{a} E, \, D \doteq \bracket{b} E \text{ and } B \doteq \bracket{d} F, \, D \doteq \bracket{c} F\]
	for positive words $E, F$ and  $a,b,c,d \in \NC(W)$ with $u \join v = ua = vb$ and $v \join w = vc = wd$.
	We also get $\bracket{b} E \doteq \bracket{c} F$ with length strictly less than the length of $\bracket{u} A$ since $v$ is not the identity. Then, we can use induction on the word length $n$ to get
	\[
	E \doteq \bracket{g} G, \, F \doteq \bracket{h} G
	\]
	for a positive word $G$ and $g,h \in \NC(W)$ with $b \join c = bg = ch$. Then, we have $A \doteq \bracket{a} \bracket{g} G$ and $B \doteq \bracket{d} \bracket{h} G$.
	We also get $u \join v \join w = (u \join v) \join (v \join w) = vb \join vc = vbg = vch$ by Lemma~\ref{Lem:distr}. If we define $e,f,i \in \NC(W)$ by
	$u \join w = ue = wf$ and $u \join v \join w = (u \join w) i$, we are in the situation of Lemma~\ref{Lem:3.11}. Thus we can conclude $a \join e = ag = ei$ and $d \join f = dh = fi$.
	
	So we have $A \doteq \bracket{e} \bracket{i} G$ and $B \doteq \bracket{f} \bracket{i} G$. Setting $C = \bracket{i} G$ finishes the proof.
	
\end{proof}

If we replace the prefixes in the proof of Lemma~\ref{Lem:cancpreleft} by suffixes, similar arguments can be used to get the following result.

\begin{Lemma}\label{Lem:cancpreright}

	Let $u,w \in \NC(W)$ and $A,B$ be positive words in $\Gamma$.
	
	\noindent
	If $A \bracket{u} \doteq B \bracket{w}$ then there exist $e,f \in \NC(W)$ and a positive word $C$, such that \[A \doteq C \bracket{e}, \qquad B \doteq C \bracket{f}\]
	and $u \join w = eu = fw$.

\end{Lemma}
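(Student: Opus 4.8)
The plan is to deduce this suffix version directly from Lemma~\ref{Lem:cancpreleft} by means of the inversion anti-automorphism, rather than repeating the entire double induction. The key observation is that the map $\iota\colon W \to W$, $w \mapsto w^{-1}$, is an order-automorphism of $(W,\leq)$: since $\ell(x) = \ell(x^{-1})$ and $\ell$ is a conjugacy invariant (Lemma~\ref{Lem:conj}), the defining equality $\ell(w) = \ell(u) + \ell(u^{-1}w)$ for $u \leq w$ is equivalent to $\ell(w^{-1}) = \ell(u^{-1}) + \ell(u w^{-1})$, which is exactly $u^{-1} \leq w^{-1}$. Moreover $\iota$ carries the interval $\NC(W,\gamma) = [I,\gamma]$ onto $[I,\gamma^{-1}]$, and $\gamma^{-1}$ is again a Coxeter element, being conjugate to $s_n \cdots s_1$, a product of all the simple reflections, and hence conjugate to $s_1 \cdots s_n$. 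Thus $\iota$ restricts to an order-isomorphism $[I,\gamma] \to [I,\gamma^{-1}]$, and in particular it interchanges joins: $(u \join w)^{-1} = u^{-1} \join w^{-1}$, where the left-hand join is taken in $\NC(W,\gamma)$ and the right-hand one in $\NC(W,\gamma^{-1})$.

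Next I would promote $\iota$ to a semigroup anti-isomorphism $\Psi\colon \Gamma_+(W,\gamma) \to \Gamma_+(W,\gamma^{-1})$, defined on generators by $\bracket{w} \mapsto \bracket{w^{-1}}$ and extended to positive words by reversing the order of the letters. This respects $\doteq$: a defining relation $\bracket{w_1}\bracket{w_2} = \bracket{w_3}$ (with $w_1 w_2 = w_3$ and $\ell(w_1) + \ell(w_2) = \ell(w_3)$) is sent to $\bracket{w_2^{-1}}\bracket{w_1^{-1}} = \bracket{w_3^{-1}}$, which is again a defining relation because $w_2^{-1} w_1^{-1} = w_3^{-1}$ with the corresponding lengths adding. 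Consequently $\Psi$ turns any sequence of substitutions into a sequence of substitutions, so it sends positively equal words to positively equal words and, crucially, sends suffixes to prefixes.

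With these tools the proof becomes a translation. Applying $\Psi$ to the hypothesis $A\bracket{u} \doteq B\bracket{w}$ yields $\bracket{u^{-1}}\,\Psi(A) \doteq \bracket{w^{-1}}\,\Psi(B)$ in $\Gamma_+(W,\gamma^{-1})$. Since Lemma~\ref{Lem:cancpreleft} holds for every finite Coxeter group and every Coxeter element, I may apply it with the Coxeter element $\gamma^{-1}$ to obtain $e',f' \in [I,\gamma^{-1}]$ and a positive word $C'$ with $\Psi(A) \doteq \bracket{e'}C'$, $\Psi(B) \doteq \bracket{f'}C'$ and $u^{-1} \join w^{-1} = u^{-1}e' = w^{-1}f'$. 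Applying $\Psi^{-1}$ (again inversion together with order reversal) gives $A \doteq C\bracket{e}$ and $B \doteq C\bracket{f}$ with $C := \Psi^{-1}(C')$, $e := (e')^{-1}$, $f := (f')^{-1} \in [I,\gamma]$; and taking inverses in the join relation, using $(u^{-1}\join w^{-1})^{-1} = u \join w$, turns $u^{-1}\join w^{-1} = u^{-1}e' = w^{-1}f'$ into $u \join w = eu = fw$, which is exactly the claim.

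I expect the only real work to be the bookkeeping in the two middle steps: verifying carefully that $\Psi$ is compatible with $\doteq$ (so that order reversal genuinely interchanges prefixes and suffixes and the length grading is preserved), and confirming that the order-isomorphism $\iota$ transports the left-complement equalities of $[I,\gamma^{-1}]$ to the desired right-complement equalities $u \join w = eu = fw$ in $[I,\gamma]$. Neither step is conceptually hard, but both require keeping the two Coxeter elements $\gamma$ and $\gamma^{-1}$, and the two sides on which the complements sit, cleanly apart. Alternatively, one can follow the hint and rerun the double induction of Lemma~\ref{Lem:cancpreleft} verbatim with every prefix replaced by a suffix; this forces one to first establish the right-handed analogues of Lemmas~\ref{Lem:distr} and \ref{Lem:3.11}, which themselves follow from exactly the same inversion symmetry, so the anti-automorphism route simply packages all of that symmetry into a single step.
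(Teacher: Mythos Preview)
Your argument is correct and takes a genuinely different route from the paper. The paper simply remarks that one may rerun the double induction of Lemma~\ref{Lem:cancpreleft} with prefixes replaced by suffixes (and tacitly with the right-handed analogues of Lemmas~\ref{Lem:3.11} and \ref{Lem:distr}), whereas you factor the whole symmetry through the inversion anti-isomorphism $\Psi\colon \Gamma_+(W,\gamma)\to\Gamma_+(W,\gamma^{-1})$ and a single application of Lemma~\ref{Lem:cancpreleft} for the Coxeter element $\gamma^{-1}$. Your approach is more conceptual: once one checks that $\iota(w)=w^{-1}$ is an order-isomorphism $[I,\gamma]\to[I,\gamma^{-1}]$ (which indeed follows from $\ell(x)=\ell(x^{-1})$ together with Lemma~\ref{Lem:conj}) and that $\Psi$ respects $\doteq$, the suffix statement becomes a formal translation of the prefix one, and the join identity $u\join w = eu = fw$ drops out because order-isomorphisms preserve joins. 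The paper's approach has the advantage of staying inside a single lattice $\NC(W,\gamma)$ and not invoking that $\gamma^{-1}$ is again a Coxeter element, but at the cost of repeating the entire induction; your route avoids that repetition and makes transparent why the left and right versions are equivalent. The only caveat is the one you already flag: one must keep the two Coxeter elements and the two lattices cleanly separated, since the join $u^{-1}\join w^{-1}$ is taken in $[I,\gamma^{-1}]$, not in $[I,\gamma]$.
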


\begin{Corollary}\label{Cor:canc}

	The semigroup $\Gamma_+$ has left and right cancellation properties, i.e.
	
	for $A,B \in \Gamma_+$ and $u,w \in \NC(W)$ we have the following.
	
	\begin{enumerate}[label=\normalfont{(\arabic*)}]
	
		\item If $\bracket{u} A \doteq \bracket{u} B$, then $A \doteq B$ and
		
		\item if $A \bracket{u} \doteq B \bracket{u}$, then $A \doteq B$.
	
	\end{enumerate}

\end{Corollary}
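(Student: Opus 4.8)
The plan is to obtain both cancellation properties as direct specializations of the two preceding lemmas, setting $w = u$. The point is that once $w = u$, the join that controls the extracted letters collapses, and the group structure of $W$ forces those letters to be trivial.

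For part (1), suppose $\bracket{u} A \doteq \bracket{u} B$. I would apply Lemma~\ref{Lem:cancpreleft} with the roles of $u$ and $w$ both played by $u$. This produces elements $e, f \in \NC(W)$ and a positive word $C$ with $A \doteq \bracket{e} C$, $B \doteq \bracket{f} C$, and $u \join u = ue = uf$. The key observation is that $u \join u = u$, since every element is its own join. Hence $ue = u$ and $uf = u$; because left multiplication by $u$ is injective in the group $W$ (equivalently, $e = u^{-1}u = I$ and $f = u^{-1}u = I$), this forces $e = f = I$. Since $\bracket{I}$ is regarded as the empty word, we get $A \doteq C$ and $B \doteq C$, whence $A \doteq B$.

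Part (2) is entirely symmetric: from $A \bracket{u} \doteq B \bracket{u}$ I would invoke Lemma~\ref{Lem:cancpreright} with $w = u$, obtaining $e, f \in \NC(W)$ and a positive word $C$ with $A \doteq C \bracket{e}$, $B \doteq C \bracket{f}$, and $u \join u = eu = fu$. Again $u \join u = u$ gives $eu = fu = u$, and right multiplication by $u$ being injective forces $e = f = I$, so that $A \doteq C \doteq B$.

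I expect no real obstacle here, since the substantive work — the double induction realizing the ``parallelogram'' closure of positive equalities — is already carried out in Lemmas~\ref{Lem:cancpreleft} and~\ref{Lem:cancpreright}. The only subtlety worth stating explicitly is why $ue = u$ yields $e = I$: this uses that the letters appearing in the conclusions of those lemmas are uniquely determined by the group structure, exactly as noted in the proof of Lemma~\ref{Lem:3.11}, together with the trivial identity $u \join u = u$.
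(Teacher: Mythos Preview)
Your proof is correct and follows essentially the same approach as the paper: apply Lemma~\ref{Lem:cancpreleft} (respectively Lemma~\ref{Lem:cancpreright}) with $w=u$, observe that $u\join u = u$ forces $e=f=I$, and conclude $A \doteq C \doteq B$. Your explicit remark that $ue=u$ implies $e=I$ via injectivity of multiplication in $W$ is a helpful elaboration, but the argument is otherwise identical to the paper's.
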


\begin{proof}~

	\begin{enumerate}[label=(\arabic*)]
		\item \label{case1} If $\bracket{u} A \doteq \bracket{u} B$, we are in the situation of Lemma~\ref{Lem:cancpreleft}. Since $u \join u = u	$, the two elements from Lemma \ref{Lem:cancpreleft}, $e$ and $f$, must be the identity and we have $A \doteq C$ and $B \doteq C$ for a positive word $C$. Thus, we have $A \doteq B$.
		
		\item This case uses the same argument as \ref{case1} but in the situation of Lemma~\ref{Lem:cancpreright}. 
	
	\end{enumerate}
	
\end{proof}

\begin{Corollary}\label{Cor:cancleft}

	Let $a_1, a_2, \dots, a_k \in \NC(W)$ and $P, X_1, X_2, \dots X_k$ be positive words in~$\Gamma$.
	
	If $P \doteq \bracket{a_1} X_1 \doteq \dots \doteq \bracket{a_k} X_k$, then there exists a positive word $Z$ such that
	\[
		P \doteq \bracket{a_1 \join \dots \join a_k} Z.
	\]
	
\end{Corollary}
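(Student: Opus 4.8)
The plan is to proceed by induction on the number $k$ of prefix-decompositions, with Lemma~\ref{Lem:cancpreleft} serving as the engine of the argument. For the base case $k = 1$ there is nothing to do: the iterated join $a_1 \join \dots \join a_k$ collapses to $a_1$, and since $P \doteq \bracket{a_1} X_1$ by hypothesis we may simply take $Z := X_1$. (Note that each $a_i \neq I$, and since the join dominates each of its arguments, $a_1 \join \dots \join a_k \neq I$ as well, so the generator $\bracket{a_1 \join \dots \join a_k}$ is legitimately defined.)

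For the inductive step, assume the statement for any $k-1$ such decompositions and write $b := a_1 \join \dots \join a_{k-1}$, which is well-defined because $\NC(W)$ is a lattice and the join is associative. Applying the induction hypothesis to the first $k-1$ equalities $P \doteq \bracket{a_1} X_1 \doteq \dots \doteq \bracket{a_{k-1}} X_{k-1}$ furnishes a positive word $Z'$ with $P \doteq \bracket{b} Z'$. Combining this with the remaining hypothesis $P \doteq \bracket{a_k} X_k$ yields $\bracket{b} Z' \doteq \bracket{a_k} X_k$, which is exactly the hypothesis of Lemma~\ref{Lem:cancpreleft} applied with $u = b$ and $w = a_k$.

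Lemma~\ref{Lem:cancpreleft} then produces $e, f \in \NC(W)$ and a positive word $C$ with $Z' \doteq \bracket{e} C$ and $b \join a_k = be = a_k f$. Substituting back gives $P \doteq \bracket{b} Z' \doteq \bracket{b} \bracket{e} C$. The decisive observation is that $b \leq b \join a_k = be$ holds by the very definition of the join, so that $\ell(b) + \ell(e) = \ell(be)$ while $b \cdot e = be$ in $W$; this is precisely a defining relation of $\Gamma_+$, whence $\bracket{b} \bracket{e} \doteq \bracket{be} = \bracket{b \join a_k}$. Invoking associativity of the join to rewrite $b \join a_k = a_1 \join \dots \join a_k$ and setting $Z := C$ then completes the induction.

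I do not expect a genuine obstacle here, as the argument is a clean induction driven by the already-established cancellation lemma; the one place that requires attention is the final merging step, where the two consecutive generators $\bracket{b}\bracket{e}$ are collapsed into the single generator $\bracket{b \join a_k}$. This collapse is permissible only because $b \leq b \join a_k$, i.e. the join lies above each of its arguments, so that the pair $b, e$ meets the length-additivity condition $\ell(b) + \ell(e) = \ell(be)$ built into the defining relations of $\Gamma_+$. (The degenerate subcase $e = I$ causes no trouble, since then $\bracket{e}$ is the empty word and $b \join a_k = b$, so the identity $\bracket{b} \doteq \bracket{b \join a_k}$ is immediate.)
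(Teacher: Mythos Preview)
Your argument is correct and follows essentially the same route as the paper's proof: both proceed by induction on $k$, invoking Lemma~\ref{Lem:cancpreleft} at the inductive step to merge the previously obtained prefix $\bracket{a_1 \join \dots \join a_{k-1}}$ with the new one $\bracket{a_k}$, and then collapsing $\bracket{b}\bracket{e}$ into $\bracket{b \join a_k}$ via the defining relation coming from $b \leq b \join a_k$. The only cosmetic difference is that the paper takes $k=2$ as the induction base (carrying out explicitly what your induction step does in passing from $k=1$ to $k=2$), whereas you start at the trivial case $k=1$; also note that the hypothesis does not actually force $a_i \neq I$, but this is harmless given the paper's convention that $\bracket{I}$ is the empty word.
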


\begin{proof}
	The proof goes by induction on $k$.
	
	In the case $k=2$ we have $P \doteq \bracket{a_1} X_1 \doteq \bracket{a_2} X_2$. Then we can use Lemma~\ref{Lem:cancpreleft} and get
	$b_1,b_2 \in \NC(W)$ with $a_1 \join a_2 = a_1 b_1 = a_2 b_2$ and a positive word $Z$ with $X_1 \doteq \bracket{b_1} Z$ and $X_2 \doteq \bracket{b_2} Z$. Thus, we have $P \doteq \bracket{a_1} \bracket{b_1} Z \doteq \bracket{a_2} \bracket{b_2} Z$. But since $a_1 \leq a_1 b_1$ this is equivalent to $P \doteq \bracket{a_1 b_1} Z$. And the right side of the last equation is just $\bracket{a_1 \join a_2} Z$.
	
	For the induction step let $k > 2$. Considering only the first $k-1$ equalities of $P \doteq \bracket{a_1} X_1 \doteq \dots \doteq \bracket{a_k} X_k$, we get, by induction, a positive word $Y$ with $P \doteq \bracket{a_1 \join \dots \join a_{k-1}} Y$. Using the same argument as in the induction base on the equation $P \doteq \bracket{a_1 \join \dots \join a_{k-1}} Y \doteq \bracket{a_k} X_k$ we get $P \doteq \bracket{ (a_1 \join \dots \join a_{k-1} ) \join a_k} Z$ for a positive word $Z$.

\end{proof}

Again, using Lemma~\ref{Lem:cancpreright} in the previous proof we can achieve the same result for suffixes.

\begin{Corollary}\label{Cor:cancright}
	
	Let $a_1, a_2, \dots, a_k \in \NC(W)$ and $P, X_1, X_2, \dots X_k$ be positive words in~$\Gamma$.
	
	If $P \doteq X_1 \bracket{a_1} \doteq \dots \doteq X_k \bracket{a_k}$, then there exists a positive word $Z$ such that
	\[
		P \doteq Z \bracket{a_1 \join \dots \join a_k}.
	\]
	
\end{Corollary}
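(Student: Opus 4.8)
The plan is to mirror the proof of Corollary~\ref{Cor:cancleft} verbatim, replacing every prefix by a suffix and invoking the suffix cancellation Lemma~\ref{Lem:cancpreright} in place of Lemma~\ref{Lem:cancpreleft}. I would argue by induction on $k$. For the base case $k=2$ I start from $P \doteq X_1 \bracket{a_1} \doteq X_2 \bracket{a_2}$ and apply Lemma~\ref{Lem:cancpreright} to the equality $X_1 \bracket{a_1} \doteq X_2 \bracket{a_2}$. This yields elements $b_1, b_2 \in \NC(W)$ with $a_1 \join a_2 = b_1 a_1 = b_2 a_2$ and a positive word $Z$ such that $X_1 \doteq Z \bracket{b_1}$ and $X_2 \doteq Z \bracket{b_2}$. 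Hence $P \doteq Z \bracket{b_1} \bracket{a_1}$, and it remains only to fold the last two letters into the single generator $\bracket{a_1 \join a_2}$.

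The one point requiring a separate small argument — the analogue of the remark ``since $a_1 \leq a_1 b_1$'' used in the left-hand proof — is the verification that $\bracket{b_1}\bracket{a_1} = \bracket{b_1 a_1}$ is genuinely a defining relation of $\Gamma$, i.e.\ that $\ell(b_1) + \ell(a_1) = \ell(b_1 a_1)$. Here is where the left–right asymmetry surfaces: in the prefix version this is immediate from $a_1 \leq a_1 \join a_2$, whereas for suffixes I would instead use that $a_1 \leq a_1 \join a_2 = b_1 a_1$ in the reflection order, giving $\ell(b_1 a_1) = \ell(a_1) + \ell(a_1^{-1} b_1 a_1)$, and then invoke the conjugacy invariance of reflection length (Lemma~\ref{Lem:conj}) to rewrite $\ell(a_1^{-1} b_1 a_1) = \ell(b_1)$. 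This yields $\ell(b_1 a_1) = \ell(b_1) + \ell(a_1)$, exactly the length-additivity making $\bracket{b_1}\bracket{a_1} = \bracket{b_1 a_1}$ a legitimate relation. Therefore $P \doteq Z \bracket{b_1 a_1} = Z \bracket{a_1 \join a_2}$, which settles the base case.

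For the induction step with $k > 2$, I would apply the inductive hypothesis to the first $k-1$ equalities of $P \doteq X_1 \bracket{a_1} \doteq \dots \doteq X_k \bracket{a_k}$ to obtain a positive word $Y$ with $P \doteq Y \bracket{a_1 \join \dots \join a_{k-1}}$. Feeding the chain $P \doteq Y \bracket{a_1 \join \dots \join a_{k-1}} \doteq X_k \bracket{a_k}$ into the base-case argument then produces a positive word $Z$ with $P \doteq Z \bracket{(a_1 \join \dots \join a_{k-1}) \join a_k}$, and associativity of the join finishes the proof.

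I do not anticipate a genuine obstacle: because Lemma~\ref{Lem:cancpreright} is already in hand, the argument is structurally identical to that of Corollary~\ref{Cor:cancleft}, a formal transcription with prefixes replaced by suffixes. The only step that demands care is the length-additivity of the suffix product $b_1 a_1$ noted above, which the conjugacy invariance of $\ell$ supplies cleanly; everything else is routine.
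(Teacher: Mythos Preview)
Your proposal is correct and follows exactly the approach the paper indicates: the paper does not spell out a proof but simply says ``Again, using Lemma~\ref{Lem:cancpreright} in the previous proof we can achieve the same result for suffixes.'' Your write-up is in fact more careful than the paper's one-line remark, since you explicitly justify the length-additivity $\ell(b_1 a_1)=\ell(b_1)+\ell(a_1)$; note that an alternative to your conjugacy-invariance argument is to apply Lemma~\ref{Lem:3.9} directly to $a_1 \leq b_1 a_1$, which yields $b_1 = (b_1 a_1) a_1^{-1} \leq b_1 a_1$ and hence the relation.
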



\subsection{The Semi Group Embeds}

Our next step to constructing $K(\pi,1)$s is to show that $\Gamma_+$ embeds into $\Gamma$. In other words, we want to show that if two positive words in $\Gamma$ are equal, then they are positively equal. This means that if there exists some sequence of words in $\Gamma$ transforming a positive word into another positive word by using the group relations, then we can also find such a sequence which only consists of positive words.

For this we need the following lemma from Brady \cite{BRA01}.

\begin{Lemma}\label{Lem:pos}
	Let $w \in \NC(W)$. Then $\gamma w^{-1} \in \NC(W)$ and $\gamma w \gamma^{-1} \in \NC(W)$ and the following identities hold
	
	\begin{enumerate}[label=\normalfont{(\alph*)}]

		\item \label{eq:Lemmaa} $\bracket{\gamma} \doteq \bracket{\gamma w^{-1}} \bracket{w}$
		
		\item \label{eq:Lemmab} $\bracket{\gamma} \doteq \bracket{\gamma w \gamma^{-1}} \bracket{\gamma w^{-1}}$
		
		\item \label{eq:Lemmac} $\bracket{\gamma} \bracket{w} \doteq \bracket{\gamma w \gamma^{-1}} \bracket{\gamma}$.
		
	\end{enumerate}
	
\end{Lemma}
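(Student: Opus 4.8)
The plan is to observe that all three displayed identities are, up to one substitution each, built directly from the \emph{defining relations} of $\Gamma_+$, and that the two membership claims rest on exactly the same length bookkeeping needed to certify those relations. The only tools required are the definition of the reflection order, the conjugacy invariance of reflection length (Lemma~\ref{Lem:conj}), and Lemma~\ref{Lem:3.9}.

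First I would record the single length equation that drives everything: since $w\in\NC(W)$ means $w\le\gamma$, we have $\ell(\gamma)=\ell(w)+\ell(w^{-1}\gamma)$. Conjugating $w^{-1}\gamma$ by $w$ gives $w(w^{-1}\gamma)w^{-1}=\gamma w^{-1}$, so Lemma~\ref{Lem:conj} yields $\ell(w^{-1}\gamma)=\ell(\gamma w^{-1})$; hence $\ell(\gamma)=\ell(\gamma w^{-1})+\ell(w)$. Because $(\gamma w^{-1})^{-1}\gamma=w$, this last equation is precisely the statement $\gamma w^{-1}\le\gamma$, so $\gamma w^{-1}\in\NC(W)$ (this is also immediate from Lemma~\ref{Lem:3.9} applied to $w\le\gamma$). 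For the second membership, conjugacy invariance gives $\ell(\gamma w\gamma^{-1})=\ell(w)$, and since $(\gamma w\gamma^{-1})^{-1}\gamma=\gamma w^{-1}$ I would check that $\ell(\gamma)=\ell(\gamma w\gamma^{-1})+\ell(\gamma w^{-1})$ is again just $\ell(\gamma)=\ell(w)+\ell(w^{-1}\gamma)$; this shows $\gamma w\gamma^{-1}\le\gamma$, i.e.\ $\gamma w\gamma^{-1}\in\NC(W)$.

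With the memberships and length identities in hand, identities \ref{eq:Lemmaa} and \ref{eq:Lemmab} are each a single defining relation. For \ref{eq:Lemmaa}, the equality $(\gamma w^{-1})\,w=\gamma$ in $W$ together with $\ell(\gamma w^{-1})+\ell(w)=\ell(\gamma)$ is exactly a relation of $\Gamma_+$, so $\bracket{\gamma w^{-1}}\bracket{w}\doteq\bracket{\gamma}$. For \ref{eq:Lemmab}, $(\gamma w\gamma^{-1})(\gamma w^{-1})=\gamma$ with the matching length identity is likewise a defining relation. Finally \ref{eq:Lemmac} follows by chaining the first two: rewrite the leading $\bracket{\gamma}$ in $\bracket{\gamma}\bracket{w}$ by \ref{eq:Lemmab} to get $\bracket{\gamma w\gamma^{-1}}\bracket{\gamma w^{-1}}\bracket{w}$, then collapse the suffix $\bracket{\gamma w^{-1}}\bracket{w}$ to $\bracket{\gamma}$ using \ref{eq:Lemmaa}, leaving $\bracket{\gamma w\gamma^{-1}}\bracket{\gamma}$.

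I expect no serious obstacle here; the entire content is the conjugacy invariance of $\ell$ repackaging the one equation $\ell(\gamma)=\ell(w)+\ell(w^{-1}\gamma)$. The only point demanding care is the handling of identity-element edge cases — e.g.\ $w=I$, or $\gamma w^{-1}=I$, or $\gamma w\gamma^{-1}=I$ — where the corresponding bracket is not an actual generator; there I would invoke the stated convention that $\bracket{I}$ denotes the empty word, so that each identity remains (trivially) valid.
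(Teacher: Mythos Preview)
Your proposal is correct and follows essentially the same route as the paper: both establish $\gamma w^{-1}\le\gamma$ and $\gamma w\gamma^{-1}\le\gamma$ via the length identities (the paper invokes Lemma~\ref{Lem:3.9} twice, while you unpack the same computation through Lemma~\ref{Lem:conj}), deduce that \ref{eq:Lemmaa} and \ref{eq:Lemmab} are single defining relations, and then obtain \ref{eq:Lemmac} by applying \ref{eq:Lemmab} followed by \ref{eq:Lemmaa}. Your explicit remark on the $\bracket{I}$ edge cases is a small addition the paper omits.
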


\begin{proof}
	Since $w \leq \gamma$ we have $\gamma w^{-1} \leq \gamma$ by Lemma~\ref{Lem:3.9} and thus by definition of $\NC(W)$ $\gamma w^{-1} \in \NC(W)$. Applying the same lemma to $\gamma w^{-1} \leq \gamma$ gives $\gamma (\gamma w^{-1})^{-1} = \gamma w \gamma^{-1} \in \NC(W)$.
	
	Since $\gamma w^{-1} \leq \gamma$ we have $\ell (\gamma) = \ell (\gamma w^{-1}) + \ell ((\gamma w^{-1})^{-1} \gamma) = l(\gamma w^{-1}) + l(w)$. Therefore, by definition of $\Gamma$, Equation~\ref{eq:Lemmaa} is a relation in $\Gamma$.
	
	The same arguments for $\gamma w \gamma^{-1} \leq \gamma$ provide
	$\ell (\gamma) = \ell (\gamma w \gamma^{-1}) + \ell ((\gamma w \gamma^{-1})^{-1} \gamma)
	= \ell (\gamma w \gamma^{-1}) + \ell (\gamma w^{-1}) $ and thus by definition of $\Gamma$, Equation~\ref{eq:Lemmab} must also be a relation in $\Gamma$.
	To obtain Equation~\ref{eq:Lemmac}, simply use \ref{eq:Lemmab} on the left factor of $\bracket{\gamma} \bracket{w}$ to get $\bracket{\gamma w \gamma^{-1}} \bracket{\gamma w^{-1}} \bracket{w}$ and then apply \ref{eq:Lemmaa} to the second and third factor to get
	$\bracket{\gamma w \gamma^{-1}} \bracket{\gamma}$. All the transformations were positively equal and therefore we have established Equation~\ref{eq:Lemmac}.
\end{proof}

A direct consequence of this lemma is the following corollary. It results from applying Lemma~\ref{Lem:pos} inductively.

\begin{Corollary} \label{Cor:pos}
	Let $w \in \NC(W)$. Then the following identities hold for $k \in \N_0$.
	
	\begin{enumerate}[label=\normalfont{(\alph*)}]
		\item \label{eq:Cora} $\bracket{\gamma} \doteq \bracket{\gamma^{k+1} w^{-1} \gamma^{-k}} \bracket{\gamma^k w \gamma^{-k}}$
		
		\item \label{eq:Corb} $\bracket{\gamma} \doteq \bracket{\gamma^{k+1} w \gamma^{-(k+1)}} \bracket{\gamma^{k+1} w^{-1} \gamma^{-k}}$
		
		\item \label{eq:Corc} $\bracket{\gamma} \bracket{\gamma^k w \gamma^{-k}} \doteq \bracket{\gamma^{k+1} w \gamma^{-(k+1)}} \bracket{\gamma}$.
		
	\end{enumerate}
	
\end{Corollary}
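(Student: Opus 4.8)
The plan is to recognize that each of the three identities at level $k$ is nothing more than the corresponding identity of Lemma~\ref{Lem:pos} applied to the conjugated reflection $w_k := \gamma^k w \gamma^{-k}$ in place of $w$. So the first thing I would do is verify that $w_k$ is a legitimate input for the lemma, that is, that $w_k \in \NC(W)$ for every $k \in \N_0$. This is the only genuinely inductive ingredient: the base case $w_0 = w \in \NC(W)$ is the hypothesis, and Lemma~\ref{Lem:pos} guarantees $\gamma v \gamma^{-1} \in \NC(W)$ whenever $v \in \NC(W)$, so applying this to $v = w_k$ yields $w_{k+1} = \gamma w_k \gamma^{-1} \in \NC(W)$. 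This is what is meant by ``applying the lemma inductively''.

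With $w_k \in \NC(W)$ established, I would substitute $w_k$ for $w$ in the three identities of Lemma~\ref{Lem:pos} and simplify the resulting conjugates. From $w_k = \gamma^k w \gamma^{-k}$ one reads off $w_k^{-1} = \gamma^k w^{-1} \gamma^{-k}$, then $\gamma w_k^{-1} = \gamma^{k+1} w^{-1} \gamma^{-k}$ and $\gamma w_k \gamma^{-1} = \gamma^{k+1} w \gamma^{-(k+1)}$. Plugging these into parts \ref{eq:Lemmaa}, \ref{eq:Lemmab} and \ref{eq:Lemmac} of Lemma~\ref{Lem:pos} produces verbatim the three identities \ref{eq:Cora}, \ref{eq:Corb} and \ref{eq:Corc} of the corollary; no further positive transformations are needed beyond those already supplied by the lemma.

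The bulk of the work is therefore purely notational: one must track the conjugates carefully and confirm that the lemma's outputs match the claimed exponents. The only real content is the closure property $\gamma^k w \gamma^{-k} \in \NC(W)$, and even that is a one-line induction off the lemma. I expect no serious obstacle; the mild risk is an off-by-one slip in the exponents of $\gamma$ when the conjugation by a single $\gamma$ is absorbed into the $\gamma^k$ prefix, so I would double-check the two simplifications $\gamma w_k^{-1} = \gamma^{k+1} w^{-1} \gamma^{-k}$ and $\gamma w_k \gamma^{-1} = \gamma^{k+1} w \gamma^{-(k+1)}$ with particular care before concluding.
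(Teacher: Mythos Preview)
Your proposal is correct and mirrors the paper's proof almost verbatim: the paper first proves by induction that $\gamma^k w \gamma^{-k} \in \NC(W)$ using Lemma~\ref{Lem:pos}, and then applies parts \ref{eq:Lemmaa}, \ref{eq:Lemmab}, \ref{eq:Lemmac} of that lemma to $v := \gamma^k w \gamma^{-k}$, which is exactly your substitution $w_k$. Your identification of the two key simplifications $\gamma w_k^{-1} = \gamma^{k+1} w^{-1} \gamma^{-k}$ and $\gamma w_k \gamma^{-1} = \gamma^{k+1} w \gamma^{-(k+1)}$ is precisely what the paper uses as well.
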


\begin{proof}
	First we will show by induction on $k$ that $\gamma^k w \gamma^{-k} \in \NC(W)$ for $k \in \N$, if $w \in \NC(W)$.
	
	Lemma~\ref{Lem:pos} gives the induction base. For the induction step assume that we have $u := \gamma^{k-1} w \gamma^{-(k-1)} \in \NC(W)$ for some $k>1$. Then, again, by Lemma~\ref{Lem:pos} we have $\gamma u \gamma^{-1} = \gamma^k w \gamma^{-k} \in \NC(W)$.
	
	We can now apply \ref{eq:Lemmaa}, \ref{eq:Lemmab} and \ref{eq:Lemmac} of Lemma~\ref{Lem:pos} to $v:=\gamma^k w \gamma^{-k}$ for $k \in \N_0$ and some $w \in \NC(W)$ to obtain
	\begin{enumerate}[label=\normalfont{(\alph*)}]
		
		\item $\bracket{\gamma} \doteq \bracket{\gamma v^{-1}} \bracket{v} \doteq \bracket{\gamma^{k+1} w^{-1} \gamma^{-k}} \bracket{\gamma^k w \gamma^{-k}}$
		
		\item $\bracket{\gamma} \doteq \bracket{\gamma v \gamma^{-1}} \bracket{\gamma v^{-1}} \doteq \bracket{\gamma^{k+1} w \gamma^{-(k+1)}} \bracket{\gamma^{k+1} w^{-1} \gamma^{-k}}$
		
		\item $\bracket{\gamma} \bracket{v} \doteq \bracket{\gamma v \gamma^{-1}} \bracket{\gamma} \doteq \bracket{\gamma^{k+1} w \gamma^{-(k+1)}} \bracket{\gamma}$.
		
	\end{enumerate}
\end{proof}

Now we are ready to proof the following theorem.

\begin{Theorem}[{\cite[Theorem~5.7]{BRA01}}]\label{thm:embed}
	
	The semigroup $\Gamma_+$ embeds in $\Gamma$, i.e. if two positive words are equal, then they are positively equal.
	
\end{Theorem}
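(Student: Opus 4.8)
The plan is to recognise $\bracket{\gamma}$ as a Garside-type element and to deduce the embedding from the two structural facts that Corollary~\ref{Cor:pos} already supplies, together with the cancellation laws of Corollary~\ref{Cor:canc}. Concretely, I would record two facts about $\bracket{\gamma}$. First, \emph{every generator divides $\bracket{\gamma}$ on both sides}: for $w \in \NC(W)$ we have $\bracket{\gamma} \doteq \bracket{w}\bracket{w^{-1}\gamma}$ (since $w \leq \gamma$) and $\bracket{\gamma} \doteq \bracket{\gamma w^{-1}}\bracket{w}$ by Lemma~\ref{Lem:pos}\ref{eq:Lemmaa}. Second, \emph{conjugation by $\bracket{\gamma}$ permutes the generators}: by Corollary~\ref{Cor:pos}\ref{eq:Corc} one has $\bracket{\gamma}\bracket{w} \doteq \bracket{\gamma w \gamma^{-1}}\bracket{\gamma}$ with $\gamma w\gamma^{-1}\in\NC(W)$, so a factor $\bracket{\gamma}$ can be slid rightward through any positive word, conjugating each generator. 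Crucially, the same slide works leftward: writing $\bracket{w}\bracket{\gamma} \doteq \bracket{\gamma}\bracket{\gamma^{-1}w\gamma}$ only requires $\gamma^{-1}w\gamma\in\NC(W)$, which holds because $W$ is finite, so $\gamma$ has some finite order $h$ and $\gamma^{-1}w\gamma=\gamma^{h-1}w\gamma^{-(h-1)}\in\NC(W)$ again by Corollary~\ref{Cor:pos}. Thus powers of $\bracket{\gamma}$ commute past positive words in both directions up to $\doteq$.

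The main step, and the part I expect to be the real obstacle, is to establish the \emph{Ore condition}: any two positive words admit a common right multiple in $\Gamma_+$. I would prove the key lemma that every positive word $X$ of length $n$ left-divides $\bracket{\gamma}^{n}$, i.e. $\bracket{\gamma}^{n} \doteq X Y$ for some positive $Y$, by induction on $n$. The base case $n=1$ is exactly left-divisibility of $\bracket{\gamma}$ by a single generator. For the step, write $X \equiv \bracket{v}X'$, use $\bracket{\gamma}\doteq\bracket{v}\bracket{v^{-1}\gamma}$, and then recognise that, after sliding the relevant $\bracket{\gamma}$ factors past $\bracket{v^{-1}\gamma}$ via the two-sided conjugation move above and applying the induction hypothesis to $X'$, the word $\bracket{v}X'$ emerges as a left-divisor of $\bracket{\gamma}^{n}$. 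The careful bookkeeping of which generator gets conjugated at each slide is routine but is where all the combinatorial work sits. Granting the lemma, for positive words $A,B$ of lengths $a,b$ both divide $\bracket{\gamma}^{N}$ with $N=\max(a,b)$, so $\bracket{\gamma}^{N}\doteq AC\doteq BD$ exhibits a common right multiple; the symmetric statement gives common left multiples.

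Finally I would invoke Ore's theorem: a cancellative monoid (Corollary~\ref{Cor:canc}) in which any two elements have a common multiple embeds into its group of fractions $G=\{\,\bar A\,\bar B^{-1} : A,B\in\Gamma_+\,\}$, where $\bar A$ denotes the $\doteq$-class of $A$. It then remains to identify $G\cong\Gamma$. Both groups are generated by the images of $\NC(W)\setminus I$; the defining relations of $\Gamma$ hold in $G$ because they already hold positively in $\Gamma_+\subseteq G$, giving a homomorphism $\Gamma\to G$, while the universal property of the group of fractions applied to the canonical map $\Gamma_+\to\Gamma$ (whose generators are invertible in $\Gamma$) gives an inverse $G\to\Gamma$; these are mutually inverse on generators, so $G\cong\Gamma$. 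Composing, $\Gamma_+\hookrightarrow G\cong\Gamma$ is injective, which is precisely the assertion that two positive words equal in $\Gamma$ are positively equal. I would remark that one can avoid citing Ore's theorem by instead proving existence of a normal form $\bracket{\gamma}^{-k}P$ ($P$ positive, $k\in\N_0$) directly from Corollary~\ref{Cor:pos} and deducing its uniqueness from cancellation and common multiples; the crux remains the common-multiple lemma, i.e. controlling the conjugation action of $\gamma$ on $\NC(W)$.
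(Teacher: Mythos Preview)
Your argument is correct and is the standard Garside-theoretic route, but it is genuinely different from the paper's proof. The paper (following Brady and ultimately Garside) argues directly at the level of words: given positive $X,Y$ equal in $\Gamma$, take a realising sequence $X=W_0,\dots,W_k=Y$ of arbitrary words, let $m$ be the maximal number of inverse letters occurring, and explicitly rewrite each $W_i$ into a positive word by premultiplying with $\bracket{\gamma}^m$ and using the identities of Lemma~\ref{Lem:pos} and Corollary~\ref{Cor:pos} to slide one $\bracket{\gamma}$ rightward and absorb each inverse letter as it appears; this yields $\bracket{\gamma}^m X \doteq \bracket{\gamma}^m Y$, and then left cancellation (Corollary~\ref{Cor:canc}) gives $X\doteq Y$. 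No Ore theorem, no group of fractions, no common-multiple lemma is stated; everything is an explicit manipulation of the given transformation chain.

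Your approach instead isolates the structural content: $\bracket{\gamma}$ is a two-sided Garside element, every positive word on $k$ letters left-divides $\bracket{\gamma}^k$, hence $\Gamma_+$ is a cancellative Ore monoid, and its group of fractions is identified with $\Gamma$ by a universal-property argument. This is cleaner conceptually and yields the common-multiple lemma and the normal form $\bracket{\gamma}^{-k}P$ as reusable by-products (useful later for the word problem and for contractibility arguments), at the cost of importing Ore's theorem. The paper's hands-on rewriting is more self-contained and closer in spirit to \cite{GAR69} and \cite{BKL98}, but it hides the monoid-theoretic structure that your version makes explicit. Both rest on exactly the same engine, namely the conjugation identity $\bracket{\gamma}\bracket{w}\doteq\bracket{\gamma w\gamma^{-1}}\bracket{\gamma}$ together with finiteness of the order of $\gamma$.
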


\begin{proof}
	
	Take two positive words, say $X$ and $Y$, that are equal. Then there exists a sequence of words
	\begin{equation} \label{seq1}
	X=W_0, W_1, \dots, W_k=Y
	\end{equation}
	s.t. $W_{i+1}$ is obtained from $W_i$ either by applying one of the defining relations $\bracket{w_1} = \bracket{w_2} \bracket{w_3}$ or by introducing or deleting either $\bracket{w} \bracket{w}^{-1}$ or $\bracket{w}^{-1} \bracket{w}$. Let $m_i$ be the number of occurrences of inverses of generators in $W_i$ ($i=0,\dots ,k$) and let $m$ be the maximum of all $m_i$.
Instead of explicitly giving a sequence of positive words from $X$ to $Y$, we will construct such one from $\bracket{\gamma}^m X$ to $\bracket{\gamma}^m Y$. This will show $\bracket{\gamma}^m X \doteq \bracket{\gamma}^m Y$ and we can use left cancellation (Corollary~\ref{Cor:canc}) to conclude $X \doteq Y$.
\ \\

	We start with $\bracket{\gamma}^m X= \bracket{\gamma}^m W_0$ and inductively append $k$ sequences of positive words. The sequence appended in step $i$ depends on the relation that was used to obtain $W_i$ from $W_{i-1}$ in sequence \ref{seq1}. The last word of each sequence will take the role of $W_i$ and be of the form $\bracket{\gamma}^{m-m_i} \bracket{w_1} \bracket{w_2} \dots$ for some elements $w_1, w_2, \dots \in \NC(W)$.

	So assume that to obtain $W_i$ from $W_{i-1}$ there was $\bracket{w} \bracket{w}^{-1}$ introduced after the $j$-th formal generator in the word $W_{i-1}$. Let $\bracket{\gamma}^{m-m_{i-1}} \bracket{w_1} \bracket{w_2} \dots$ be the last word of the previously added sequence. Let $k \in \N_0$, s.t. the inverse added in step $i$ is the $(k+1)$-th inverse from the right in $W_i$. Then we append the following sequence.
	
	\begin{align*}		
		\bracket{\gamma}^{m-m_{i-1}-1} \bracket{\gamma w_1 \gamma^{-1}} \bracket{\gamma} \bracket{w_2} \dots \bracket{w_j} &\bracket{w_{j+1}} \dots \\
		\bracket{\gamma}^{m-m_{i-1}-1} \bracket{\gamma w_1 \gamma^{-1}} \bracket{\gamma w_2 \gamma^{-1}} \bracket{\gamma} \dots \bracket{w_j }&\bracket{w_{j+1}} \dots \\
		\vdots \hspace{23mm}\\
		\bracket{\gamma}^{m-m_{i-1}-1} \bracket{\gamma w_1 \gamma^{-1}} \bracket{\gamma w_2 \gamma^{-1}} \dots \bracket{ \gamma w_j \gamma^{-1}} \bracket{&\gamma} \bracket{w_{j+1}} \dots\\
		\bracket{\gamma}^{m-(m_{i-1}+1)} \bracket{\gamma w_1 \gamma^{-1}} \dots \bracket{ \gamma w_j \gamma^{-1}} \bracket{\gamma^{k+1} w \gamma^{-(k+1)}} &\bracket{\gamma^{k+1} w^{-1} \gamma^{-k}} \bracket{w_{j+1}} \dots~,
	\end{align*}
	
	i.e. we take one copy of $\bracket{\gamma}$ and use \ref{eq:Lemmac} from Lemma \ref{Lem:pos} to bring it to the $j$-th position and then use \ref{eq:Corb} from Corollary \ref{Cor:pos} to absorb the inverse. Since $m_i = m_{i-1} +1 $ the last word has the required form.
	
	If instead $W_i$ was obtained from $W_{i-1}$ by introducing $\bracket{w}^{-1} \bracket{w}$, we do the same with the sequence
	
	\begin{align*}		
		\bracket{\gamma}^{m-m_{i-1}-1} \bracket{\gamma w_1 \gamma^{-1}} \bracket{\gamma} \bracket{w_2} \dots \bracket{w_j} &\bracket{w_{j+1}} \dots \\
		\bracket{\gamma}^{m-m_{i-1}-1} \bracket{\gamma w_1 \gamma^{-1}} \bracket{\gamma w_2 \gamma^{-1}} \bracket{\gamma} \dots \bracket{w_j }&\bracket{w_{j+1}} \dots \\
		\vdots \hspace{23mm}\\
		\bracket{\gamma}^{m-m_{i-1}-1} \bracket{\gamma w_1 \gamma^{-1}} \bracket{\gamma w_2 \gamma^{-1}} \dots \bracket{ \gamma w_j \gamma^{-1}} \bracket{&\gamma} \bracket{w_{j+1}} \dots\\
		\bracket{\gamma}^{m-(m_{i-1}+1)} \bracket{\gamma w_1 \gamma^{-1}} \bracket{\gamma w_2 \gamma^{-1}} \dots \bracket{ \gamma w_j \gamma^{-1}} \bracket{\gamma^{k+1} w^{-1} \gamma^{-k}} &\bracket{\gamma^k w \gamma^{-k}} \bracket{w_{j+1}} \dots~.
	\end{align*}
	
	This time we use \ref{eq:Cora} from Corollary~\ref{Cor:pos} for the last step.
	
	Now, if in sequence \ref{seq1} two words differ by a relation $\bracket{w_1} \bracket{w_2} = \bracket{w_3}$ for elements $w_1, w_2, w_3 \in \NC(W)$, then we will use the same relation in our new sequence. The only thing that might be different is that the elements could have been conjugated by $\gamma$ in previous steps. To show that also $\bracket{\gamma w_1 \gamma ^{-1}} \bracket{\gamma w_2 \gamma ^{-1}} \doteq \bracket{\gamma w_3 \gamma ^{-1}}$ holds (or an iteration of this equation), we have to show that this is also a defining relation, i.e. that $\gamma w_1 \gamma ^{-1} \, \gamma w_2 \gamma ^{-1} = \gamma w_3 \gamma ^{-1}$ and that $l(\gamma w_1 \gamma ^{-1}) + l(\gamma w_2 \gamma ^{-1}) = l(\gamma w_3	 \gamma ^{-1})$. The first equation is clear. For the second one note that reflection length is a conjugacy invariant.\\	

 	What remains to do is to define what we now do whenever $\bracket{w} \bracket{w}^{-1}$ or $\bracket{w}^{-1} \bracket{w}$ was deleted in the original sequence. For this, first note that whenever we introduced $\bracket{\gamma^{k+1} w \gamma^{-(k+1)}} \bracket{\gamma^{k+1} w^{-1} \gamma^{-k}}$ or $\bracket{\gamma^{k+1} w^{-1} \gamma^{-k}} \bracket{\gamma^k w \gamma^{-k}}$, afterwards there was $\bracket{\gamma^{k+1} w' \gamma^{-(k+1)}}$ left of it and $\bracket{\gamma^k w'' \gamma^{-k}}$ right of it with $w', w'' \in \NC(W)$ such that $\bracket{w'} \bracket{w} \bracket{w}^{-1} \bracket{w''}$ – or $\bracket{w'} \bracket{w}^{-1} \bracket{w} \bracket{w''}$ respectively – is an infix of the corresponding word in the original sequence.
 	
 	This is true since to the right of the newly introduced inverse there are $k$ more inverses, each of which accounts for one conjugation, and on the left there is one additional conjugation since we just inserted another inverse.
 	
 	With this observation we know that when $\bracket{w} \bracket{w}^{-1}$ or $\bracket{w}^{-1} \bracket{w}$ was deleted in the original sequence the two elements now correspond to $\bracket{\gamma^{k+1} w \gamma^{-(k+1)}} \bracket{\gamma^{k+1} w^{-1} \gamma^{-k}}$ or $\bracket{\gamma^{k+1} w^{-1} \gamma^{-k}} \bracket{\gamma^k w \gamma^{-k}}$, respectively. In the first case we can use \ref{eq:Corb}, in the second case we can use \ref{eq:Cora} from Corollary \ref{Cor:pos} to replace the infix by $\bracket{\gamma}$. Then we can move this copy of $\bracket{\gamma}$ with \ref{eq:Corc} all the way to the left. Since $m_i = m_{i-1}-1$ the resulting word has again the required form.
 	
 	All words in the constructed sequence are positive and we also saw that either a defining relation was used or we already knew that the two consecutive words were positively equal. This gives $\bracket{\gamma}^m X \doteq \bracket{\gamma}^m Y$.
 	
\end{proof}

The structure of this proof followed \cite{BRA01}. Birman, Ko and Lee have proven in \cite{BKL98} a similar embedding theorem (Theorem~2.7) on their way to give solutions to the word and conjugacy problems in the case of braid groups. In \cite{GAR69} Garside also gave a similar embedding theorem (Theorem~4) on his way to giving a solution to the conjugacy problem in braid groups.

\newpage


\section{Construction of the Classifying Space}\label{sec:complex}

For this section fix one arbitrary finite Coxeter group $W$ and let $\Gamma$ be the poset group related to $W$, as defined in Section~\ref{Sec:PosetGr}.

Since we require some basic knowledge about topology in general and covering theory in particular, the reader who is not familiar with this field may be referred to Jänich \cite{JAE05} or Bredon \cite{BRE93}.

\subsection{The Simplicial Complex X}\label{subsec:X}

We are now prepared to define a simplicial complex which is the universal cover of the $K(\Gamma,1)$. To prove this we will later have to show that it is contractible.

\begin{Definition}
	Let $X$ be the abstract simplicial complex defined as follows.
	The vertex set of $X$ is $\Gamma$ and a subset $f = \{ g_0, g_1, \dots g_k \} \subseteq \Gamma$ is a face of $X$ if $g_i = g_0 \bracket{w_i}$ for $i = 1, \dots k$ and $w_i \in \NC(W)$ with $I < w_1 < w_2 < \dots < w_k$.
\end{Definition}

That this is indeed an abstract simplicial complex shows the following lemma.

\begin{Lemma}
	$X$ is an abstract simplicial complex.
\end{Lemma}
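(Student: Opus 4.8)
The plan is to verify the two defining properties of an abstract simplicial complex from Section~\ref{subsec:SK}: that $X$ is a non-empty family of finite subsets of $\Gamma$, and that it is closed under passing to subsets. The first property is immediate, since every singleton $\{g_0\}$ (taking $k=0$) as well as the empty set are faces, and each face is by definition a finite subset of the vertex set $\Gamma$. So the entire content lies in downward closure: given a face $f = \{g_0, g_1, \dots, g_k\}$ with $g_i = g_0 \bracket{w_i}$ and $I < w_1 < \dots < w_k$, I must show that every $f' \subseteq f$ is again a face.

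First I would dispose of the easy sub-case. If $f'$ still contains the original base vertex $g_0$, then $f' = \{g_0\} \cup \{g_{i_1}, \dots, g_{i_r}\}$ for some $1 \leq i_1 < \dots < i_r \leq k$, and since a subsequence of the chain $w_1 < \dots < w_k$ is still a strictly increasing chain with $I < w_{i_1}$, the set $f'$ satisfies the face condition verbatim with the same base point $g_0$. The genuine difficulty is the case where $g_0$ is removed, because the face condition is phrased relative to a distinguished base vertex, and discarding $g_0$ forces us to re-express the surviving vertices relative to a new base.

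For that case I would re-base at the surviving vertex of least index. Write $f' = \{g_{i_0}, g_{i_1}, \dots, g_{i_r}\}$ with $i_0 < i_1 < \dots < i_r$, and set $w_0 := I$ so that the formula below also covers $i_0 = 0$ uniformly. Define $w'_m := w_{i_0}^{-1} w_{i_m}$. The key algebraic step is to verify that $g_{i_0}^{-1} g_{i_m} = \bracket{w'_m}$ in $\Gamma$: since $w_{i_0} \leq w_{i_m}$ in $\NC(W)$, the product $w_{i_0}(w_{i_0}^{-1}w_{i_m}) = w_{i_m}$ is length-additive, so $\bracket{w_{i_0}}\bracket{w_{i_0}^{-1}w_{i_m}} = \bracket{w_{i_m}}$ is a defining relation of $\Gamma$, whence $\bracket{w_{i_0}}^{-1}\bracket{w_{i_m}} = \bracket{w'_m}$ and thus $g_{i_m} = g_{i_0}\bracket{w'_m}$. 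It then remains to check that the $w'_m$ form an admissible chain, namely that each $w'_m$ lies in $\NC(W)$ and that $I < w'_1 < \dots < w'_r$. I expect this to be the main obstacle, but it is handled cleanly by Lemma~\ref{Lem:3.10}: applying it to $I \leq w_{i_0} \leq w_{i_m}$ yields $w'_m = w_{i_0}^{-1}w_{i_m} \leq w_{i_m} \leq \gamma$, so that $w'_m \in \NC(W)$; applying it to $w_{i_0} \leq w_{i_m} \leq w_{i_{m+1}}$ yields $w'_m \leq w'_{m+1}$, with strictness inherited from $w_{i_m} \neq w_{i_{m+1}}$; and $w'_1 > I$ holds because $w_{i_0} < w_{i_1}$. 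This exhibits $f'$ as a face based at $g_{i_0}$ and completes the proof of downward closure.
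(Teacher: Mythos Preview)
Your proof is correct and follows essentially the same route as the paper's: split into the case where the original base vertex $g_0$ survives (trivial) and the case where it does not, then re-base at the surviving vertex of smallest index and invoke Lemma~\ref{Lem:3.10} to obtain the new chain $I < w_{i_0}^{-1}w_{i_1} < \dots < w_{i_0}^{-1}w_{i_r}$. You are in fact slightly more explicit than the paper in checking that each $w'_m$ lies in $\NC(W)$ via $w'_m \leq w_{i_m} \leq \gamma$, a point the paper leaves implicit.
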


\begin{proof}
	Let $F = \{ g_0, g_1, \dots g_k \}$ be a face of $X$ and $F' = \{ g_{i_0}, g_{i_1}, \dots, g_{i_l} \} \subseteq F$ a subset of this face. We only have to show that $F'$ is also a face of $X$.
	
	Since $F \in X$, we know that $g_i = g_0 \bracket{w_i}$ for $w_i \in \NC(W)$ with $I < w_1 < \dots < w_k$. Without loss of generality we can assume that $0 \leq i_0 < i_1 < \dots < i_l \leq k$. And thus, $ I < w_{i_1} < w_{i_2} < \dots < w_{i_l}$. Then there are two possible cases.
	
	Either $i_0 = 0$, then it is trivial to show $F' \in X$ since we have $g_{i_0}= g_0$ and $g_{i_j} = g_{i_0} \bracket{w_{i_j}}$ for $j=1, \dots, l$ and $w_{i_j} \in \NC(W)$ with $I < w_{i_1} < w_{i_2} < \dots w_{i_l}$.
	
	Or we have $i>0$, in which case $g_{i_0} = g_0 \bracket{w_{i_0}}$ for $w_{i_0} \in \NC(W) \setminus I$. But since $w_{i_0} \leq w_{i_j}$ for $j=1, \dots, l$, we know that $\bracket{w_{i_0}} \bracket{w_{i_0}^{-1} w_{i_j}} = \bracket{w_{i_j}}$ is a relation in $\Gamma$ and hence
	\[
		g_{i_j} = g_0 \bracket{w_{i_j}} = g_0 \bracket{w_{i_0}} \bracket{w_{i_0}^{-1} w_{i_j}} = g_{i_0} \bracket{w_{i_0}^{-1} w_{i_j}}.
	\]
	Lemma~\ref{Lem:3.10} gives $I < w_{i_0}^{-1} w_{i_1} < w_{i_0}^{-1} w_{i_2} < \dots < w_{i_0}^{-1} w_{i_l}$ and we have shown that $F'$ is a face of $X$.
\end{proof}

This complex can best be imagined as follows. Take the elements of $\Gamma$ as vertices and label each vertex with its corresponding element. Let $\Delta := \Delta\big( \NC(W) \big)$ be the order complex of $\NC(W)$.

Then glue to each vertex one copy of $\Delta$ – the identity in $\Delta$ onto the vertex of $\Gamma$ – , and label the vertices of the copy of $\Delta$ glued to $g$ by $g$ and the respective element of $\Delta$ in brackets. For example, consider the copy of $\Delta$ which was glued to the vertex labeled $g_0 \in \Gamma$. The vertex which corresponds to $w_1 \in \Delta$ is then labeled $g_0 \bracket{w_1}$.

Then, if for a vertex $g_0 \in \Gamma$ the element $g_0 \bracket{w_1}$ of the copy of $\Delta$ at $g_0$ is the same element as $g_1 \in \Gamma$, glue those vertices together.

This construction is of combinatorial nature. So whenever we talk about gluing, we actually mean identifying. This also implies that whenever we would have a double edge (or faces of higher dimension on the same vertex set) in the the geometric realization, we remove one edge (or face respectively).

\begin{Example}\label{ex:X}

	Consider the symmetric group on $3$ elements $S_3$ with the set of reflections $T= \big\{(1,2),(2,3),(1,3)\big\}$ and the Coxeter element $\gamma = (1,2,3)$. In this case we have $\Gamma = \Gamma(S_3, \gamma) \cong B_3$, the braid group on $3$ strands. We use Brady's notation from \cite{BRA01} and write $\bracket{i_1,  \dots, i_k}$ for the generator in $\Gamma$ which corresponds to $(i_1, \dots, i_k) \in \NC(S_3, \gamma)$.
	
	The maximal chains in $\NC(S_3, \gamma)$ are
	\[
		I < (1, 2) < (1, 2, 3), \quad
		I< (1, 3) < (1, 2, 3), \quad
		I < (2, 3) < (1, 2, 3).
	\]

		Thus, the geometric realization of the order complex $\Delta = \Delta\big(\NC(S_3, \gamma)\big)$ of the non-crossing partition lattice $\NC(S_3, \gamma)$ looks as displayed in Figure~\ref{fig:ord_comp}.
	
	\begin{figure}[!h]
		\centering
		\includegraphics[scale=0.7]{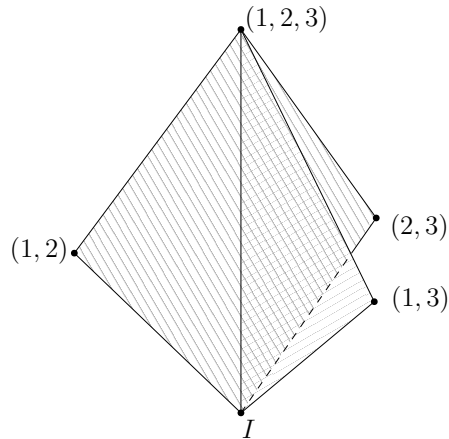}
		\caption{Order complex $\Delta(\NC(S_3, \gamma))$ of the symmetric group on $3$ elements}
		\label{fig:ord_comp}
	\end{figure}

	Then take for each element of $\Gamma = B_3$ one vertex and glue one copy of the order complex $\Delta$ to each of the vertices. Since $B_3$ has infinitely many elements, only a part of the geometric realization of this construction step is depicted in Figure~\ref{fig:Comp1}.
	
	\begin{figure}[!h]
		\centering
		\includegraphics[scale=0.68]{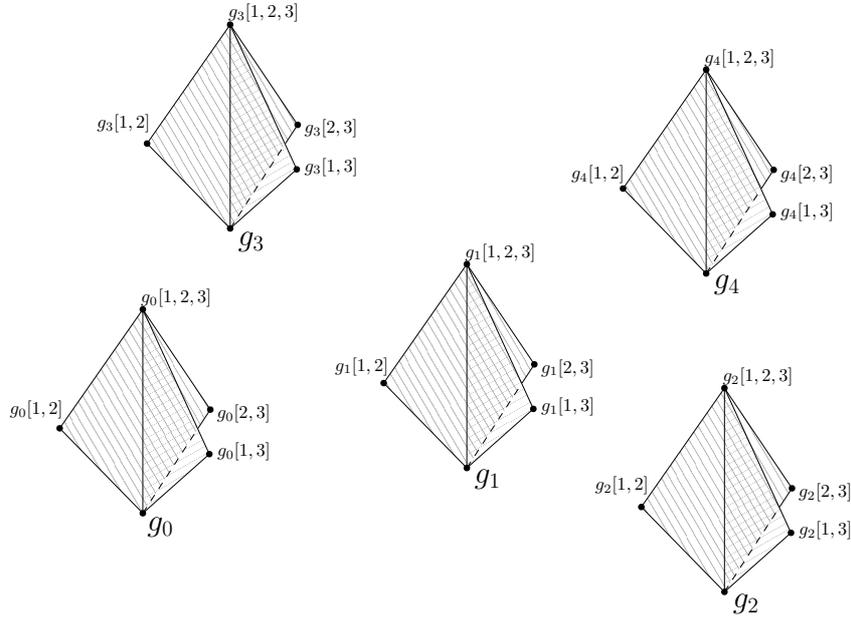}
		\caption{Gluing to each element of $B_3$ the order complex $\Delta$}
		\label{fig:Comp1}
	\end{figure}

Then finally, we glue vertices together, which are labeled by the same element from $\Gamma$. Figure~\ref{fig:Comp2} displays how the copy of $\Delta$ at the identity in $\Gamma$ and the one at the element $\bracket{1,2}$ are glued together. The two relations which are used to identify elements with one another are $I \bracket{1,2} = \bracket{1,2}$ and $\bracket{1,2} \bracket{2,3} = \bracket{1,2,3}$.

\begin{figure}[!h]
	\centering
	\includegraphics[scale=0.8]{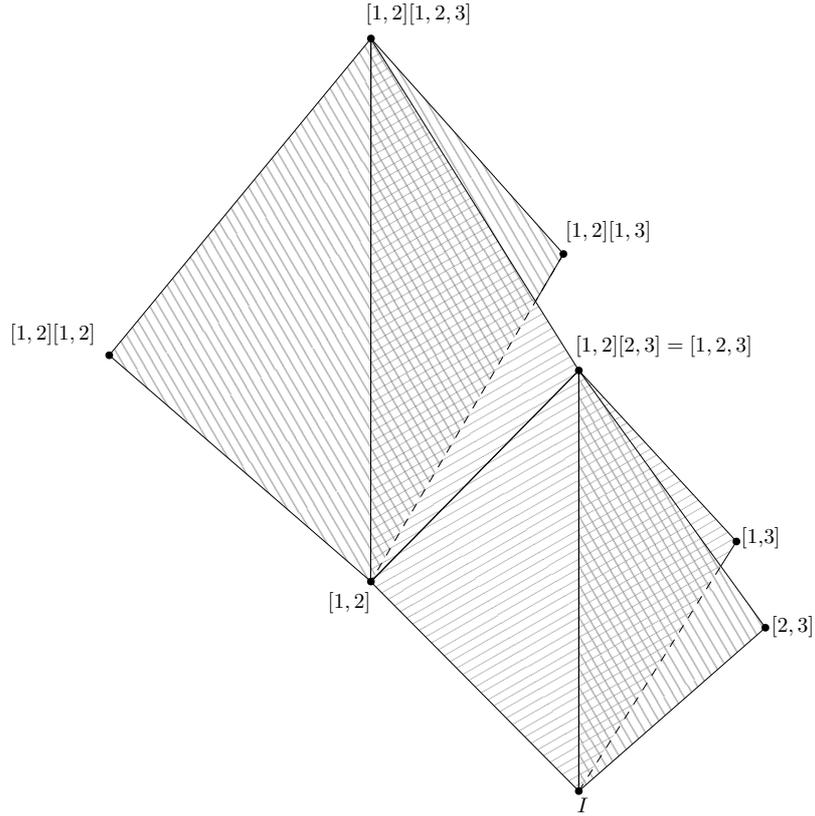}
	\caption{The copy of $\Delta$ at $I$ and $\bracket{1,2}$ are glued together}
	\label{fig:Comp2}
\end{figure}

\end{Example}

We can label not only the vertices of this complex but also the edges. For this, we label each edge – or $1$-dimensional face – by the element of $\NC(W)$ by which its two vertices differ. Consider for example the face $\{g_0,g_1\} \subseteq \Gamma$ with $g_1 = g_0 \bracket{w_1}$ and $\bracket{w_1} \in \NC(W)$. The corresponding edge is then labeled by $g_0^{-1}g_1 = \bracket{w_1}$.

Since the set $\NC(W)$ is by definition of $\Gamma$ a set of generators of $\Gamma$, $X$ must be connected. To see this, note that any vertex can be connected to the identity via the path that is labeled by a word in the generators (or inverses) of $\Gamma$ which represents the element.

\subsection{The Contractible Subcomplex}

The aim of this section is to show that $X$ is contractible. Having proven this fact we can later quickly derive that there is a $K(\Gamma,1)$. We will therefor first show that the following subcomplex of $X$ is contractible. The notion and arguments follow again Brady \cite{BRA01}.

\begin{Definition}
	Let $X^+$ be the subcomplex of $X$ which consists only of the faces of $X$ whose vertices can be labeled by positive words.
	
	Let $X_m^+$ be the subcomplex of $X^+$ which consists only of the faces of $X^+$ whose vertices can be labeled by positive words of length at most $m$.
\end{Definition}

Since any face of $X^+$ is already a face of $X$, any subset of it must also be in $X$. But since the vertices of the subset are also labeled by positive words, it is again a face of $X^+$. Thus, $X^+$ is closed under containment and therefore an abstract simplicial complex itself. For the same reason $X_m^+$ must be an abstract simplicial complex.

Before we can show that $X$ is contractible, we will first prove the following theorem.

\begin{Theorem}\label{thm:Xplus}
	The simplicical complex $X^+$ is contractible.
\end{Theorem}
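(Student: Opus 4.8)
The plan is to show that $X^+$ is contractible by expressing it as an increasing union of subcomplexes $X_m^+$ and proving that each inclusion $X_m^+ \hookrightarrow X_{m+1}^+$ is a deformation retract (or at least does not change the homotopy type), so that the whole complex deformation retracts down to $X_0^+$, which is a single point (the vertex labeled by the empty word $\bracket{I}$, i.e. the identity). Since contractibility can be detected on the level of the geometric realization and an increasing union of contractible complexes with each inclusion a homotopy equivalence is itself contractible (a standard colimit/telescope argument, using that every compact subset meets only finitely many $X_m^+$), the problem reduces to a single inductive step.

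First I would set up the filtration $X_0^+ \subseteq X_1^+ \subseteq \dots$ with $\bigcup_m X_m^+ = X^+$, and observe that $X_0^+$ consists only of the vertex $I$, hence is contractible. The heart of the argument is then the inductive step: passing from $X_m^+$ to $X_{m+1}^+$ one adds vertices labeled by positive words of length exactly $m+1$, together with the new faces they span. The key claim I would isolate is that for each newly added vertex $g$ (a positive word of length $m+1$), the portion of its star that attaches to $X_m^+$ is a \emph{cone}, so that attaching it does not change the homotopy type. Concretely, I expect to use Lemma~\ref{lem:cone}, which identifies $\st(v,\Delta) = \Cone_v(\lk(v,\Delta))$: the idea is that the new vertex $g$ is coned off over the part of its link lying in $X_m^+$, and a cone is contractible, so the union $X_m^+ \cup \st(g)$ deformation retracts back onto $X_m^+$. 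One must also check these stars can be added one at a time (or simultaneously with disjoint-enough overlaps) without interfering.

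The crux — and what I expect to be \textbf{the main obstacle} — is proving that the relevant link of each new vertex $g$ inside $X_{m+1}^+$ is itself contractible (indeed a cone), because that is precisely what guarantees the attachment is a homotopy equivalence rather than, say, killing or creating a nontrivial cycle. This is where the lattice structure of $\NC(W)$ and the cancellation properties of $\Gamma_+$ (Corollaries~\ref{Cor:canc}, \ref{Cor:cancleft}, \ref{Cor:cancright}) should do the real work: given a positive word $g$ of maximal length $m+1$, one wants to write $g \doteq g' \bracket{s}$ in a canonical way and show that the faces of $\st(g)$ meeting $X_m^+$ all contain a common vertex, exhibiting the cone structure. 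The join/meet compatibility with the group structure established in Lemma~\ref{Lem:3.11} and Lemma~\ref{Lem:distr} is what makes such a canonical common vertex exist and be unique, so I would lean heavily on those to pin down the cone point.

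Finally I would assemble the pieces: each inclusion $X_m^+ \hookrightarrow X_{m+1}^+$ is a homotopy equivalence by the coning argument, so by induction every $X_m^+$ is contractible; since $X^+ = \colim_m X_m^+$ and homotopy groups commute with this directed colimit of cofibrations (any map of a sphere into $X^+$ has compact, hence finite-level, image), it follows that $\pi_n(X^+)$ is trivial for all $n$ and $X^+$ is contractible. The delicate points to get right are the \emph{uniqueness} of the canonical factorization of each positive word (so that the cone vertex is well defined) and verifying that the new faces attach \emph{only} along this cone and nowhere else, for which I would appeal to left cancellation in $\Gamma_+$ to rule out unexpected identifications among positive words of the same length.
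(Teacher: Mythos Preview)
Your plan is essentially the paper's proof: filter $X^+$ by word length, show each $X_{m+1}^+$ deformation retracts onto $X_m^+$ by proving that the descending link of every new vertex is itself a cone (hence contractible), and conclude via the colimit. The one ingredient you do not name explicitly is Theorem~\ref{thm:embed} (the embedding $\Gamma_+ \hookrightarrow \Gamma$): the vertices $Q_i$ in the descending link of a length-$(m{+}1)$ vertex $P$ satisfy $P = Q_i\bracket{x_i}$ a priori only in $\Gamma$, and it is the embedding that upgrades this to $P \doteq Q_i\bracket{x_i}$ so that Corollary~\ref{Cor:cancright} (right, not left, cancellation --- you are working in the \emph{descending} direction) produces the common cone point $Q$ with $P \doteq Q\bracket{x_1 \join \cdots \join x_k}$.
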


For the proof of this theorem, we need a lemma and more definitions besides the known ones of \emph{closure, star} and \emph{link} (see Section~\ref{subsec:SK}).\\

\begin{Lemma}\label{lem:starfinite}

	For a vertex $v$ of $X$ the simplicial complex $\st(v,X)$ is finite.

\end{Lemma}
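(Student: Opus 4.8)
The plan is to reduce the statement to a finiteness count on the \emph{vertices} of $\st(v,X)$, and then to exploit that $\NC(W)$ is finite because $W$ is. First I would record the elementary fact that any abstract simplicial complex with only finitely many vertices has only finitely many faces, since every face is a subset of the (finite) vertex set. Hence it suffices to prove that $\st(v,X)$ has finitely many vertices.

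Next I would pin down the vertex set of $\st(v,X)$. Since by definition $\st(v,X) = \cl\big(\set{F \in X | v \in F}, X\big)$, its vertices are precisely $v$ together with every vertex $u$ that shares some face with $v$; and because $X$ is closed under containment, sharing a face with $v$ is equivalent to $\{v,u\}$ being an edge of $X$. So the whole problem collapses to bounding the number of vertices $u$ with $\{v,u\} \in X$.

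Then I would simply unwind the definition of a face of $X$ applied to the two-element set $\{v,u\}$. Choosing either vertex as the base vertex $g_0$, the set $\{v,u\}$ is a face exactly when its two vertices differ by a single generator of $\Gamma$ coming from $\NC(W)\setminus\{I\}$; explicitly, $\{v,u\}$ is an edge if and only if $u = v\bracket{a}$ or $u = v\bracket{a}^{-1}$ for some $a \in \NC(W)\setminus\{I\}$. Since $W$ is finite, $\NC(W) = [I,\gamma]$ is a finite subset of $W$, so there are at most $1 + 2\big(|\NC(W)| - 1\big)$ candidates for the vertices of $\st(v,X)$. Combined with the first step, this yields the finiteness of $\st(v,X)$.

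The only point demanding genuine care — and thus the main obstacle — lies in the middle steps: confirming that the closure operation introduces no vertices beyond the immediate neighbors of $v$, and that each such neighbor differs from $v$ by exactly one generator $\bracket{a}$ or its inverse with $a \in \NC(W)$. Both facts follow directly from the definition of $X$, in which every face is modeled on a chain in $\NC(W)$ translated by a group element, so that any two vertices of a common face differ by a single such generator; but this is precisely where the combinatorial structure of the complex enters. Everything else in the argument is formal.
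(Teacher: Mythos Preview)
Your argument is correct and is essentially the paper's own proof, only with the details spelled out more carefully: the paper simply observes that every vertex of $\st(v,X)$ is labeled $P\bracket{g}$ or $P\bracket{g}^{-1}$ for some $g \in \NC(W)$ and concludes from the finiteness of $\NC(W)$. Your extra step of noting that a simplicial complex on a finite vertex set has only finitely many faces makes explicit what the paper leaves implicit, but the approach is the same.
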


\begin{proof}

	Let $P$ be the label at $v$. Then all vertices in $\st(v,X)$ have labels of the form $P \bracket{ g }$ or $P \bracket{ g }^{-1}$, where $g \in \NC(W)$. Since $\NC(W)$ is finite, the lemma follows.

\end{proof}

\begin{Definition}
	Let $v$ be a vertex of $X$.
	\begin{enumerate}[label=(\roman*)]
		\item The \emph{ascending star} $\st_+(v,X)$ is defined as the closure of the union of faces of $X$ that are of the form
		$\big\{g, g \bracket{w_1}, g \bracket{w_2}, \dots, g \bracket{w_k}\big\}$ where $I < w_1 < w_2 < \dots < w_k$ is a chain in $\NC(W)$ and $g$ the label at $v$.
		
		\item The \emph{descending star} $\st_-(v,X)$ is defined as the closure of the union of faces of $X$ that are of the form
		$\big\{g, g \bracket{w_1}, g \bracket{w_2}, \dots, g \bracket{w_k}\big\}$ where $I < w_1 < w_2 < \dots < w_k$ is a chain in $\NC(W)$ and $g \bracket{w_k}$ the label at $v$.
		
		\item The \emph{ascending link} $\lk_+(v,X)$ is defined as the union of the faces of $\st_+(v,X)$, which do not have $v$ as a vertex.
		
		\item The \emph{descending link} $\lk_-(v,X)$ is defined as the union of the faces of $\st_-(v,X)$, which do not have $v$ as a vertex.
		
	\end{enumerate}
	
\end{Definition}

The same definitions can also be made with $X$ substituted by $X^+$. In this case we only consider faces of $X^+$.

Note that since the just defined complexes are subcomplexes of $\st(v,X)$, by Lemma~\ref{lem:starfinite} they are also finite.

\begin{proof}[Proof of Theorem~\ref{thm:Xplus}]

	We will show that for $m \in \N_0$, $X_{m+1}^+$ deformation retracts onto $X_{m}^+$. Having proven this, we know that $X^+$ is contractible since $X_0^+$ only consists of one vertex.
	
	So let $m \in \N_0$. To show that $X_{m+1}^+$ deformation retracts onto $X_{m}^+$, we first have to get an idea of what the difference between the two simplicial complexes is. Hereby $X_m^+$ is seen as a subset of $X_{m+1}^+$. Since the difference of the vertex sets only is all vertices which are labeled by a positive word of length $m+1$, we have to understand how these vertices are connected to $X_m^+$.
	
	Let $u$ be a vertex of $X_{m+1}^+ \setminus X_m^+$, i.e. it is labeled by a positive word of length $m+1$. Then, in $X_{m+1}^+$, $u$ is connected to $X_m^+$ via all faces of $X_{m+1}^+$ that have $u$ as a vertex. Because of the fact that all labels in $X^+$ are positive words and by the definition of a face in $X^+$, it is clear that the vertices of such a face must have strictly increasing length. Hence, in a face of $X_{m+1}^+$ that contains $u$, the vertex $u$ must be the only vertex labeled by a positive words of length $m+1$, and all other vertices in such a face have length strictly smaller than $m+1$. Thus, in $X_{m+1}^+$, $u$ is connected to $X_m^+$ exactly via all faces of $\st_-(u,X^+) \setminus \lk_-(u,X^+)$.
	
	For the same reason it is true that if $u$ and $w$ are vertices labeled by positive words of length $m+1$, $w$ can never be a vertex of $\st_-(u,X^+)$ and vice versa. Therefore, $\st_-(u,X^+) \cap \st_-(w,X^+) \subseteq X_m^+$.
	
	With this observation in mind, we only have to show for each vertex $u$ of $X_{m+1}^+ \setminus X_m^+$ that $\st_-(u,X^+)$ can be deformation retracted onto $\lk_-(u,X^+) \subseteq X_m^+$.\\
	
	To show this, note the following. Let $\tilde{X}_u := \st_-(u,X^+)$. Then $\tilde{X}_u$ is the smallest simplicial complex that contains all faces where the longest label is at the vertex $u$. Therefore, $\st(u,\tilde{X}_u) = \st_-(u,X^+)$ and $ \lk(u, \tilde{X}_u) = \lk_-(u, X^+)$. Thus, by Lemma~\ref{lem:cone} $\st(u,\tilde{X}_u) = \Cone_u(\lk(u,\tilde{X}_u))$; or in other words $\st_-(u,X^+) = \Cone_u(\lk_-(u, X^+))$. If we can now show that $\lk_-(u,X^+)$ is contractible, then it follows from Lemma~\ref{lem:deformation} that $\st_-(u,X^+) = \Cone_u(\lk_-(u, X^+))$ deformation retracts onto $\lk_-(u, X^+)$, which is what we want to show.
	
	So, what is left to show is that $\lk_-(u,X^+)$ is contractible. For this, let $P$ be the positive word labeling the vertex $u$, and $v_1, \dots, v_k$ all vertices in $\lk_-(u,X^+)$ with labels $Q_1, \dots, Q_k$, respectively. Note, that by Lemma~\ref{lem:starfinite} $\lk_-(u,X^+)$ is finite. Then, by definition of the descending link it follows that there are $x_1, \dots, x_k \in \NC(W)$ so that
	\[
		P = Q_1 \bracket{x_1} = \dots = Q_k \bracket{x_k}.
	\]
	Using Theorem~\ref{thm:embed}, we can deduce that $P \doteq Q_1 \bracket{x_1} \doteq \dots \doteq Q_k \bracket{x_k}$. Then we can apply Corollary~\ref{Cor:cancright} to obtain a positive word $Q$ with $P \doteq Q \bracket{x}$ for $x = x_1 \join \dots \join x_k \in \NC(W)$. But then there must be a vertex labeled with $Q$ – call it $v$ – which is also in $\lk_-(u,X^+)$.
	
	Then, for a vertex $v_i \in \lk_-(u, X^+)$ with label $Q_i$, we have $x_i \leq x$ and thus $x x_i^{-1} \leq x$ by Lemma~\ref{Lem:3.9}. And since we then have $Q \bracket{x x_i^{-1}} \bracket{x_i} \doteq Q \bracket{x} \doteq P \doteq Q_i \bracket{x_i} $ it follows that $Q \bracket{x x_i^{-1}} \doteq Q_i$. Hence, for any face $F$ of $\lk_-(u, X^+)$, the face $F \cup \{v\}$ is also in $\lk_-(u, X^+)$. This means that $\lk_-(u, X^+)$ is a cone with cone vertex $v$ and therefore contractible by Lemma~\ref{lem:conecontr}.
	
\end{proof}

\begin{Lemma}\label{lem:action}

	Let $g \in \Gamma$. Then there exists $k \in \N$ such that $\bracket{\gamma}^k g$ can be represented by a positive word.

\end{Lemma}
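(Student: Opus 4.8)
The plan is to induct on the number $m$ of inverse letters appearing in a fixed word representing $g$, showing that a single copy of $\bracket{\gamma}$ placed on the left can absorb one inverse generator, so that a sufficiently high power clears them all. The two mechanisms needed both come from Lemma~\ref{Lem:pos}. First, reading relation~\ref{eq:Lemmaa} as the group equation $\bracket{\gamma} = \bracket{\gamma w^{-1}}\bracket{w}$ and cancelling $\bracket{w}$ on the right yields the absorption identity $\bracket{\gamma}\bracket{w}^{-1} = \bracket{\gamma w^{-1}}$, which turns an inverse generator into a positive one (with $\gamma w^{-1}\in\NC(W)$ by Lemma~\ref{Lem:pos}). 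Second, relation~\ref{eq:Lemmac}, $\bracket{\gamma}\bracket{w} = \bracket{\gamma w \gamma^{-1}}\bracket{\gamma}$, lets a copy of $\bracket{\gamma}$ be pushed rightward past any positive generator at the cost of conjugating it by $\gamma$; again $\gamma w \gamma^{-1}\in\NC(W)$, so the result is a legitimate positive word.

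For the base case $m=0$, $g$ is already positive, and since $\gamma\in\NC(W)$ makes $\bracket{\gamma}$ a positive generator, $\bracket{\gamma}g$ is positive and $k=1$ suffices. For the inductive step I would write $g = P\,\bracket{w}^{-1}C$, where $P$ is the positive prefix up to the leftmost inverse letter $\bracket{w}^{-1}$ (so $w\in\NC(W)\setminus I$) and $C$ contains the remaining $m-1$ inverses. Left-multiplying by $\bracket{\gamma}$ and repeatedly applying the commutation identity to move $\bracket{\gamma}$ through each generator of $P$ produces a positive word $P'$ of $\gamma$-conjugated generators, giving $\bracket{\gamma}g = P'\,\bracket{\gamma}\bracket{w}^{-1}C$. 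The absorption identity then yields $\bracket{\gamma}g = P'\,\bracket{\gamma w^{-1}}\,C$, a word with only $m-1$ inverse letters. By the inductive hypothesis there is $k'$ with $\bracket{\gamma}^{k'}(\bracket{\gamma}g)$ positive, whence $k=k'+1$ works for $g$.

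I do not expect a genuine obstacle here; this is essentially the inverse-clearing mechanism already used in the proof of Theorem~\ref{thm:embed}, needed now only up to equality in $\Gamma$ rather than positive equality, which is why I may freely cancel $\bracket{w}\bracket{w}^{-1}$. The only points requiring care are bookkeeping: verifying that every element produced by conjugation remains in $\NC(W)$ (supplied by Lemma~\ref{Lem:pos}, or by Corollary~\ref{Cor:pos} if one prefers to track iterated conjugates explicitly), and handling the degenerate case $w=\gamma$, where $\gamma w^{-1}=I$ so the absorbed generator simply disappears—harmless, as this only shortens the word.
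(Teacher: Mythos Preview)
Your proposal is correct and follows essentially the same route as the paper's proof: both push a copy of $\bracket{\gamma}$ rightward through the positive prefix via Lemma~\ref{Lem:pos}\ref{eq:Lemmac} and then absorb the first inverse using Lemma~\ref{Lem:pos}\ref{eq:Lemmaa}, iterating until all inverses are removed. The only cosmetic difference is that you frame it as a formal induction on the number of inverse letters, whereas the paper exhibits one reduction step explicitly and then says ``continuing like this''; the content is the same.
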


\begin{proof}

	Let $A$ be a word of the form $A = P_1 \bracket{r_1}^{-1} P_2 \bracket{r_2}^{-1} \dots \bracket{r_k}^{-1} P_{k+1}$, where $P_1, \dots, P_{k+1} \in \Gamma$ are positive words and $r_1, \dots, r_k \in \NC(W)$ such that $A$ represents $g \in \Gamma$. For any $g \in \Gamma$ we can find a word of this form which represents it. Then we can proceed as in the proof of Theorem~\ref{thm:embed}. We have
	
	\begin{align*}
	 \bracket{\gamma}^k A &= \bracket{\gamma}^k P_1 \bracket{r_1}^{-1} P_2 \bracket{r_2}^{-1} \dots\\
	 &\stackrel{\mathmakebox[\widthof{=}]{\text{\ref{Lem:pos} \ref{eq:Lemmac}}}}{=} \, \bracket{\gamma}^{k-1} Q_1 \bracket{\gamma} \bracket{r_1}^{-1} P_2 \bracket{r_2}^{-1} \dots\\
	 &\stackrel{\mathmakebox[\widthof{=}]{\text{\ref{Lem:pos} \ref{eq:Lemmaa}}}}{=} \, \bracket{\gamma}^{k-1} Q_1 \bracket{\gamma r_1^{-1}} \bracket{r_1} \bracket{r_1}^{-1} P_2 \bracket{r_2}^{-1} \dots\\
	 &= \bracket{\gamma}^{k-1} Q_1 \bracket{\gamma r_1^{-1}} P_2 \bracket{r_2}^{-1} \dots
	\end{align*}
	with $Q_1$ being the positive word obtained from $P_1$ by replacing each formal generator $\bracket{w}$ in $P_1$ by $\bracket{\gamma w \gamma^{-1}}$. In the last word the whole prefix $\bracket{\gamma}^{k-1} Q_1 \bracket{\gamma r_1^{-1}} P_2$ is a positive word and hence we have reduced the number of occurrences of inverses by one. Continuing like this we can inductively remove all inverses and obtain a positive word.
	
\end{proof}

\begin{Definition}

	Let $g, g_0, g_1 \dots g_k \in \Gamma$ and $\{g_0, \dots, g_k\}$ a face of $X$. Then $\Gamma$ acts on $X$ by
	\[
		g \cdot \{g_0, \dots, g_k\} := \{gg_0, \dots, gg_k\}
	\]

\end{Definition}

This action is simplicial, since if $\{g_0, \dots, g_k\}$ is a face of $X$, we have $g_i = g_0 \bracket{w_i}$ for $i = 1, \dots k$ and $w_i \in \NC(W)$ with $I < w_1 < w_2 < \dots < w_k$. It then follows immediately that $g g_i = g g_0 \bracket{w_i}$ for $i = 1, \dots k$ and $w_i \in \NC(W)$ with $I < w_1 < w_2 < \dots < w_k$.

\begin{Lemma}\label{lem:Xlimit}

	\[X = \lim_{k \to \infty} \bracket{\gamma}^{-k} X^+.\]

\end{Lemma}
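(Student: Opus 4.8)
The plan is to read the direct limit as the union of the nested family of subcomplexes $\bracket{\gamma}^{-k}X^+$ and to show that this union exhausts $X$. First I would check that the family is genuinely increasing, i.e.\ that $\bracket{\gamma}^{-k}X^+ \subseteq \bracket{\gamma}^{-(k+1)}X^+$ for every $k$. Because the action of $\Gamma$ on $X$ is simplicial, $\bracket{\gamma}$ acts by a simplicial automorphism, so applying $\bracket{\gamma}^{k+1}$ to both sides reduces this to the single inclusion $\bracket{\gamma}X^+ \subseteq X^+$. That inclusion holds for the following reason: a vertex of $X^+$ carries a positive-word label $P$, and left-multiplying by $\bracket{\gamma}$ yields $\bracket{\gamma}P$, which is again positive; a face all of whose vertices admit positive labels is therefore sent to a face with the same property. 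Thus $\bracket{\gamma}X^+$ is a subcomplex of $X^+$, the chain $X^+ \subseteq \bracket{\gamma}^{-1}X^+ \subseteq \bracket{\gamma}^{-2}X^+ \subseteq \cdots$ is established, and $\lim_{k\to\infty}\bracket{\gamma}^{-k}X^+ = \bigcup_{k}\bracket{\gamma}^{-k}X^+$, equipped with its weak topology.

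Each $\bracket{\gamma}^{-k}X^+$ is a subcomplex of $X$, so the inclusion $\bigcup_k \bracket{\gamma}^{-k}X^+ \subseteq X$ is immediate. For the reverse inclusion I would take an arbitrary face $F = \{g_0, g_0\bracket{w_1}, \dots, g_0\bracket{w_k}\}$ of $X$, with $I < w_1 < \dots < w_k$ in $\NC(W)$, and produce a translate $\bracket{\gamma}^N F$ lying in $X^+$. Applying Lemma~\ref{lem:action} to the base vertex $g_0$ furnishes an $N \in \N$ so that $\bracket{\gamma}^N g_0$ is represented by a positive word, say $P$. The remaining vertices of $\bracket{\gamma}^N F$ then carry the labels $P\bracket{w_i}$, which are also positive words, being a positive word followed by a single generator. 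Hence every vertex of $\bracket{\gamma}^N F$ admits a positive label, so $\bracket{\gamma}^N F \in X^+$ and consequently $F \in \bracket{\gamma}^{-N}X^+$.

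Combining the two inclusions yields $X = \bigcup_k \bracket{\gamma}^{-k}X^+ = \lim_{k\to\infty}\bracket{\gamma}^{-k}X^+$. I do not anticipate a real obstacle: the only points requiring care are the bookkeeping that the chain is monotone, so the direct limit is an honest union rather than a more elaborate colimit, and the observation that a single exponent $N$, extracted from the base vertex $g_0$ via Lemma~\ref{lem:action}, simultaneously makes every vertex of the face positive, since the higher vertices differ from $g_0$ only by appending generators $\bracket{w_i}$. The one subtlety worth stating explicitly is that $X$ already carries the weak (CW) topology, which is precisely the topology of the colimit of the nested subcomplexes, so the asserted equality holds not merely on the level of face sets but as topological spaces.
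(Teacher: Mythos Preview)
Your argument is correct and follows essentially the same route as the paper's proof: both invoke Lemma~\ref{lem:action} to push an arbitrary face into $X^+$ by a suitable power of $\bracket{\gamma}$, and both rely on the action being simplicial for the easy inclusion. Your version is in fact a bit more careful, since you explicitly verify the monotonicity $\bracket{\gamma}^{-k}X^+ \subseteq \bracket{\gamma}^{-(k+1)}X^+$ (which the paper tacitly assumes) and you observe more cleanly that it suffices to make the single base vertex $g_0$ positive, whereas the paper chooses the exponent as the maximum number of inverses over all vertices of the face.
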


\begin{proof}

	Since the action of $\Gamma$ on $X$ is simplicial, it follows that the right side is a subcomplex of the left side.
	
	For the other way around, let $v$ be a vertex in $X$. By Lemma~\ref{lem:action}, there exists $k_v \in \N$ such that $\bracket{\gamma}^{k_v} \cdot v$ is labeled by a positive word and therefore a vertex of $X^+$. Thus, it follows that $v$ is a vertex of $\bracket{\gamma}^{-k} X^+$ for all $k \geq k_v$ and therefore also of $\lim_{k \to \infty} \bracket{\gamma}^{-k} X^+$.
	
	To see that any face $f = \{g_0, \dots, g_k\}$ of $X$ is a face of the right side, consider $\bracket{\gamma}^k \cdot f$ for $k$ being the maximum number of appearances of inverses of formal generators among the $g_i$. Then, $\bracket{\gamma}^k \cdot f$ is a face of $X^+$ and we can use the same argument as before combined with the fact that the action is simplicial.
	
\end{proof}

\begin{Theorem}\label{thm:Xcontr}

	The simplicial complex $X$ is contractible.

\end{Theorem}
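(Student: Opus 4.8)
The plan is to realize $X$ as an increasing union of contractible subcomplexes and then conclude via a compactness argument together with Whitehead's theorem. By Lemma~\ref{lem:Xlimit} we already have $X = \lim_{k \to \infty} \bracket{\gamma}^{-k} X^+$, so the first task is to make precise that this limit is in fact a nested union of copies of $X^+$.

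First I would verify that the subcomplexes form an ascending chain
\[
	X^+ = \bracket{\gamma}^{0} X^+ \subseteq \bracket{\gamma}^{-1} X^+ \subseteq \bracket{\gamma}^{-2} X^+ \subseteq \cdots.
\]
Translating the inclusion $\bracket{\gamma}^{-k} X^+ \subseteq \bracket{\gamma}^{-(k+1)} X^+$ by $\bracket{\gamma}^{k+1}$ reduces it to the single claim $\bracket{\gamma} X^+ \subseteq X^+$. This holds because every vertex of $X^+$ carries a positive label $P$, and its image under the action carries the label $\bracket{\gamma} P$, which is again a positive word; the same applies to every face, so left multiplication by $\bracket{\gamma}$ sends faces of $X^+$ to faces of $X^+$.

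Next I would argue that each $\bracket{\gamma}^{-k} X^+$ is contractible. Since the $\Gamma$-action on $X$ is simplicial, left multiplication by $\bracket{\gamma}^{-k}$ is a simplicial automorphism of $X$ carrying $X^+$ isomorphically onto $\bracket{\gamma}^{-k} X^+$. Hence the geometric realization of $\bracket{\gamma}^{-k} X^+$ is homeomorphic to that of $X^+$, which is contractible by Theorem~\ref{thm:Xplus}.

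Combining these two points with Lemma~\ref{lem:Xlimit} exhibits $X = \bigcup_{k \geq 0} \bracket{\gamma}^{-k} X^+$ as a directed union of contractible subcomplexes, with $X$ carrying the weak (CW) topology. The concluding step is homotopy-theoretic: any map $f \colon S^n \to X$ has compact image, so it meets only finitely many simplices and therefore factors through a finite subcomplex contained in some $\bracket{\gamma}^{-k} X^+$. As that subcomplex is contractible, $f$ is nullhomotopic in it and hence in $X$, so $\pi_n(X)$ is trivial for all $n$; since $X$ is a connected CW complex, Whitehead's theorem then forces $X$ to be contractible. I expect the main obstacle to be getting the direction of the nesting correct and justifying the compactness step, namely that in the geometric realization every compact set, in particular the image of a sphere or of a nullhomotopy, lies in a finite subcomplex, so that the contractions can be imported from a single $\bracket{\gamma}^{-k} X^+$.
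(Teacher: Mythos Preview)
Your argument is correct and follows essentially the same route as the paper: realize $X$ as the nested union of the contractible translates $\bracket{\gamma}^{-k}X^+$ via Lemma~\ref{lem:Xlimit} and Theorem~\ref{thm:Xplus}, deduce that all homotopy groups vanish, and finish with Whitehead's theorem. The only differences are cosmetic: you verify the nesting $\bracket{\gamma}X^+\subseteq X^+$ explicitly (which the paper leaves implicit) and you replace the paper's appeal to Lemma~\ref{lem:may} by the underlying compactness argument, which is precisely how that lemma is proved.
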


\begin{proof}

	We use Lemma~\ref{lem:Xlimit} and show that $\lim_{k \to \infty} \bracket{\gamma}^{-k} X^+$ is contractible.
	
	Since the simplicial action of $\Gamma$ on $X$ induces a continuous action on the geometric realization, the map $X \to X,\, x \mapsto g \cdot x$ induces for each $g \in \Gamma$ a homeomorphism on the geometric realization.
	
	Using this fact and the fact that $X^+$ is contractible by Theorem~\ref{thm:Xplus}, we can deduce that $\bracket{\gamma}^{-k} X^+$ is contractible for $k \geq 1$.
	
	Now, since $X$ is the direct limit of a sequence of inlcusions, the homotopy functor commutes with the direct limit by Lemma~\ref{lem:may} and we have
	\[
		\pi_n(X) \stackrel{\text{\ref{lem:Xlimit}}}{=} \pi_n(\lim_{k \to \infty} \bracket{\gamma}^{-k} X^+) \stackrel{\text{\ref{lem:may}}}{=} \lim_{k \to \infty} \pi_n(\bracket{\gamma}^{-k} X^+).
	\]
	Hence, all homotopy groups of $X$ are trivial.
	
	To see that $X$ is contractible, we now consider the map $f \colon X \to \{*\}$ that sends all points of $X$ to the one-point-space. Since all homotopy groups of $X$ are trivial, $f$ induces isomorphisms on the homotopy groups. Also, we can regard both $X$ and the one-point-space as connected CW-complexes. Hence we can apply Whitehead's Theorem~\ref{thm:Whitehead} to deduce that $f$ is a homotopy equivalence. Thus, $X$ is contractible.

\end{proof}

We now consider the following space.

\begin{Definition}
	
	Let $K :=\, \mfaktor{\Gamma}{X}$ be the quotient space of the geometric realization of $X$ under the action of $\Gamma$.

\end{Definition}

Since the action of $\Gamma$ on $X$ is transitive on the vertices, $K$ consists of only one vertex. Recall the construction of $X$ in Section~\ref{subsec:X}. All copies of the order complex of $\NC(W)$ which were glued to the vertices corresponding to elements in $\Gamma$ are identified with one another under the quotient map $X \to K = \, \mfaktor{\Gamma}{X}$. Therefore, $K$ consists of one vertex and one copy of the order complex of $\NC(W)$, of which each vertex is identified with the single vertex in $K$. Additionally, the other faces of the order complex are identified via
\[
	\{w_0, w_1, \dots, w_k\} \sim \{I, w_0^{-1}w_1, \dots, w_0^{-1}w_k\}.
\]
The quotient space $K$ then forms a finite CW-complex.

\begin{Example}\label{ex:K}
	Continuing Example~\ref{ex:X}, we consider the symmetric group on $3$ elements and the braid group on $3$ strands, respectively, with Coxeter element $\gamma=(1,2,3)$.
	
	The faces of $\Delta\big(\NC(S_3,\gamma)\big)$ which are identified with one another are
	\begin{align*}
		\{I\} \sim \big\{(1,2)\big\} \sim &\big\{(2,3)\big\} \sim \big\{(1,3)\big\} \sim \big\{(1,2,3)\big\},\\
		\big\{I, (1,2)\big\} &\sim \big\{(1,3), (1,2,3)\big\},\\
		\big\{I, (2,3)\big\} &\sim \big\{(1,2), (1,2,3)\big\},\\
		\big\{I, (1,3)\big\} &\sim \big\{(2,3), (1,2,3)\big\}.
	\end{align*}
	 
	 Besides the obvious identification of all vertices with one another, Figure~\ref{fig:K} shows which other faces are identified.
	\begin{figure}[!h]
		\centering
		\includegraphics[scale=0.8]{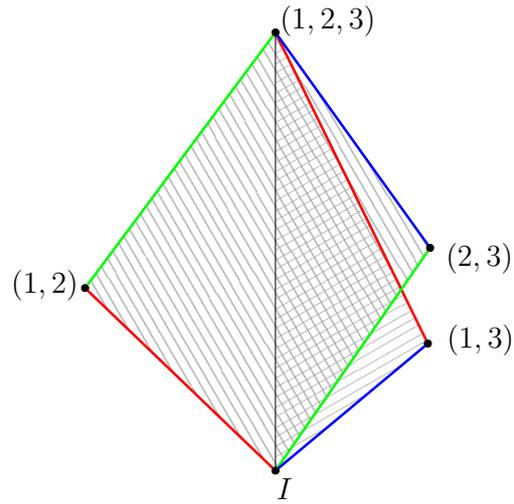}
		\caption{Building the $K(B_3,1)$ from the order complex of $\NC(S_3,\gamma)$}
		\label{fig:K}
	\end{figure}

\end{Example}

\begin{Theorem}
	
	The CW-complex $K$ is a $K\big(A(W),1 \big)$.

\end{Theorem}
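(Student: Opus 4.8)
The plan is to show that the quotient map $p\colon X \to K = \mfaktor{\Gamma}{X}$ is the \emph{universal} covering of $K$ with deck transformation group $\Gamma$, and then to read off both defining conditions of a $\K(A(W),1)$ directly from this. Concretely, I would first establish that $\Gamma$ acts on the geometric realization of $X$ as a free, inversion-free simplicial action, so that $p$ becomes a covering map; then use that $X$ is contractible (Theorem~\ref{thm:Xcontr}), hence simply connected, to identify $X$ as the universal cover; and finally combine the standard covering-space dictionary with the isomorphism $\Gamma \cong A(W)$ (Theorem~\ref{thm:iso}).

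The crucial step is the freeness of the action, and here the natural length grading on $\Gamma$ does the work. Since every defining relation $\bracket{w_1}\bracket{w_2} = \bracket{w_3}$ of $\Gamma$ satisfies $\ell(w_1) + \ell(w_2) = \ell(w_3)$, the assignment $\bracket{w} \mapsto \ell(w)$ extends to a group homomorphism $\lambda\colon \Gamma \to \Z$. For a face $\{g_0, \dots, g_k\}$ of $X$ with $g_i = g_0\bracket{w_i}$ and $I < w_1 < \dots < w_k$, one has $\lambda(g_i) = \lambda(g_0) + \ell(w_i)$ with the $\ell(w_i)$ strictly increasing, so $\lambda$ is injective on the vertex set of each simplex. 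If some $g \in \Gamma$ stabilises a simplex setwise, then $\lambda(g)$ translates this finite set of integers onto itself, forcing $\lambda(g) = 0$; since $\lambda$ is injective on the vertices of the simplex and the permutation induced by $g$ preserves $\lambda$, the element $g$ fixes every vertex, and freeness of left multiplication on the vertex set $\Gamma$ gives $g = I$. Thus every simplex stabiliser is trivial, i.e. the action is free and without inversions.

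With this in hand I would invoke the standard fact (see e.g. \cite{BRE93}) that a free, inversion-free simplicial action yields a properly discontinuous free action by homeomorphisms on the geometric realization, so that $p\colon X \to K$ is a covering map which is moreover normal with deck group $\Gamma$. Because $X$ is connected and contractible, it is the universal cover of $K$. The covering-space correspondence then gives $\pi_1(K) \cong \Gamma$, and composing with Theorem~\ref{thm:iso} yields $\pi_1(K) \cong A(W)$. For the higher groups, the covering map induces isomorphisms $\pi_n(X) \xrightarrow{\ \cong\ } \pi_n(K)$ for all $n \geq 2$; since $X$ is contractible these all vanish, so $\pi_n(K)$ is trivial for $n \geq 2$. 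As $K$ is connected, it is therefore a $\K(A(W),1)$.

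I expect the main obstacle to be the freeness/admissibility verification rather than the covering-theoretic bookkeeping: one must rule out that a nontrivial element stabilises a simplex while merely permuting its vertices, which is precisely what the length homomorphism $\lambda$ settles. A secondary point worth care is confirming that local contractibility of geometric realizations, together with the finiteness of stars (Lemma~\ref{lem:starfinite}), supplies the locally path-connected and properly discontinuous hypotheses needed for the covering-space theorems to apply.
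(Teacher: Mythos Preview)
Your proposal is correct and follows essentially the same route as the paper: identify $X\to K$ as the universal cover with deck group $\Gamma$, deduce $\pi_1(K)\cong\Gamma\cong A(W)$ via Theorem~\ref{thm:iso}, and use the covering projection to transport the vanishing of $\pi_n(X)$ for $n\geq 2$ to $K$. The only substantive difference is that you supply an explicit freeness/no-inversion argument via the length homomorphism $\lambda\colon\Gamma\to\Z$, whereas the paper simply asserts that $X\to K$ is a covering map with deck transformation group $\Gamma$; your argument is a clean way to fill that gap.
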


\begin{proof}

	Since the deck transformations of the covering map $X \to K$ are exactly the maps $X \to X, \, x \mapsto g \cdot x$ for all $g \in \Gamma$, the deck transformation group is isomorphic to $\Gamma$, which is in turn isomorphic to $A(W)$ by Theorem~\ref{thm:iso}. We also know that $X$ is the universal covering space since by Theorem~\ref{thm:Xcontr} the complex $X$ is contractible and hence simply connected. Thus, the fundamental group of $K$ is isomorphic to the deck transformation group,
	\[
		\pi_1(K) \cong A(W).
	\]
	
	To see that all $\pi_n\big(A(W)\big)$ are trivial for $n \geq 2$, we consider Theorem~\ref{thm:pi_two}. As noted after the theorem, the map $\pi_n(X) \to \pi_n(K)$ is an isomorphism for $n \geq 2$, since the quotient map $X \to K$ is a covering map.
	
	This shows that $K$ is a $K\big(A(W),1 \big)$.

\end{proof}

 \newpage  

\clearpage
  \newpage

	\bibliography{Thesis}

\begin{thebibliography}{10}

\bibitem{AS60}
Lars~V. Ahlfors and Leo Sario.
\newblock {\em Riemann surfaces}.
\newblock Princeton Mathematical Series, No. 26. Princeton University Press,
  Princeton, N.J., 1960.

\bibitem{ARM09}
Drew Armstrong.
\newblock Generalized noncrossing partitions and combinatorics of {C}oxeter
  groups.
\newblock {\em Mem. Amer. Math. Soc.}, 202(949):x+159, 2009.

\bibitem{ART25}
Emil Artin.
\newblock Theorie der {Z}\"opfe.
\newblock {\em Abh. Math. Sem. Univ. Hamburg}, 4(1):47--72, 1925.

\bibitem{BES03}
David Bessis.
\newblock The dual braid monoid.
\newblock {\em Ann. Sci. \'Ecole Norm. Sup. (4)}, 36(5):647--683, 2003.

\bibitem{BIA97}
Philippe Biane.
\newblock Some properties of crossings and partitions.
\newblock {\em Discrete Math.}, 175(1-3):41--53, 1997.

\bibitem{BKL98}
Joan Birman, Ki~Hyoung Ko, and Sang~Jin Lee.
\newblock A new approach to the word and conjugacy problems in the braid
  groups.
\newblock {\em Adv. Math.}, 139(2):322--353, 1998.

\bibitem{BB05}
Anders Bj\"orner and Francesco Brenti.
\newblock {\em Combinatorics of {C}oxeter groups}, volume 231 of {\em Graduate
  Texts in Mathematics}.
\newblock Springer, New York, 2005.

\bibitem{BOA02}
J.~Michael Boardman.
\newblock Simplicial complexes and {$\Delta$}-complexes.
\newblock \url{http://www.math.jhu.edu/~jmb/note/delta.pdf}, 2002.
\newblock [Online; accessed 3-December-2017].

\bibitem{BRA01}
Thomas Brady.
\newblock A partial order on the symmetric group and new {$K(\pi,1)$}'s for the
  braid groups.
\newblock {\em Adv. Math.}, 161(1):20--40, 2001.

\bibitem{BW02}
Thomas Brady and Colum Watt.
\newblock {$K(\pi,1)$}'s for {A}rtin groups of finite type.
\newblock In {\em Proceedings of the {C}onference on {G}eometric and
  {C}ombinatorial {G}roup {T}heory, {P}art {I} ({H}aifa, 2000)}, volume~94,
  pages 225--250, 2002.

\bibitem{BW08}
Thomas Brady and Colum Watt.
\newblock Non-crossing partition lattices in finite real reflection groups.
\newblock {\em Trans. Amer. Math. Soc.}, 360(4):1983--2005, 2008.

\bibitem{BRE93}
Glen~E. Bredon.
\newblock {\em Topology and geometry}, volume 139 of {\em Graduate Texts in
  Mathematics}.
\newblock Springer-Verlag, New York, 1993.

\bibitem{BS72}
Egbert Brieskorn and Kyoji Saito.
\newblock Artin-{G}ruppen und {C}oxeter-{G}ruppen.
\newblock {\em Invent. Math.}, 17:245--271, 1972.

\bibitem{CAR72}
R.~W. Carter.
\newblock Conjugacy classes in the {W}eyl group.
\newblock {\em Compositio Math.}, 25:1--59, 1972.

\bibitem{DEL72}
Pierre Deligne.
\newblock Les immeubles des groupes de tresses g\'en\'eralis\'es.
\newblock {\em Invent. Math.}, 17:273--302, 1972.

\bibitem{DYE01}
Matthew~J. Dyer.
\newblock On minimal lengths of expressions of coxeter group elements as
  products of reflections.
\newblock {\em Proceedings of the American Mathematical Society},
  129(9):2591--2595, 2001.

\bibitem{GAR69}
F.~A. Garside.
\newblock The braid group and other groups.
\newblock {\em Quart. J. Math. Oxford Ser. (2)}, 20:235--254, 1969.

\bibitem{HAT02}
Allen Hatcher.
\newblock {\em Algebraic topology}.
\newblock Cambridge University Press, Cambridge, 2002.

\bibitem{HUM90}
James~E. Humphreys.
\newblock {\em Reflection groups and {C}oxeter groups}, volume~29 of {\em
  Cambridge Studies in Advanced Mathematics}.
\newblock Cambridge University Press, Cambridge, 1990.

\bibitem{JAE05}
Klaus J\"anich.
\newblock {\em Topologie}.
\newblock Springer-Verlag, Berlin, eighth edition, 2005.

\bibitem{KRA00}
Daan Krammer.
\newblock The braid group {$B_4$} is linear.
\newblock {\em Invent. Math.}, 142(3):451--486, 2000.

\bibitem{KRE72}
G.~Kreweras.
\newblock Sur les partitions non crois\'ees d'un cycle.
\newblock {\em Discrete Math.}, 1(4):333--350, 1972.

\bibitem{MAY99}
J.~P. May.
\newblock {\em A concise course in algebraic topology}.
\newblock Chicago Lectures in Mathematics. University of Chicago Press,
  Chicago, IL, 1999.

\bibitem{ROT88}
Joseph~J. Rotman.
\newblock {\em An introduction to algebraic topology}, volume 119 of {\em
  Graduate Texts in Mathematics}.
\newblock Springer-Verlag, New York, 1988.

\bibitem{WHI49}
J.~H.~C. Whitehead.
\newblock Combinatorial homotopy. {I}.
\newblock {\em Bull. Amer. Math. Soc.}, 55:213--245, 1949.

\end{thebibliography}

\clearpage

\begin{appendix}
\appendixpage
\addappheadtotoc
\section{Topological facts}

	This appendix is a collection of topological facts that are used in the thesis but are thematically detached from the rest.\\

	In the following lemmas a more general definition of cone is used. A \emph{cone} over a topological space $Z$ is defined to be $\Cone(Z) := \big(Z \times [0,1] \big) / \big(Z \times \{1\}\big)$. We are not going into further detail but note that in the case of simplicial complexes the definition coincides with our Definition~\ref{def:SK} up to homeomorphism.

\begin{Lemma}\label{lem:conecontr}
	Let $Z$ be a topological space. Then $\Cone (Z)$ is contractible.
\end{Lemma}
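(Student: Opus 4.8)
The plan is to exhibit an explicit contraction that slides every point of $\Cone(Z)$ along the cone lines toward the apex. Write $q \colon Z \times [0,1] \to \Cone(Z)$ for the quotient map and let $* := q\big(Z \times \{1\}\big)$ denote the apex, i.e. the single point to which the top $Z \times \{1\}$ is collapsed. I want to build a homotopy $H \colon \Cone(Z) \times [0,1] \to \Cone(Z)$ with $H(\,\cdot\,,0) = \mathrm{id}$ and $H(\,\cdot\,,1) \equiv *$.

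First I would define the homotopy upstairs, on $Z \times [0,1]$, by pushing the interval coordinate toward $1$:
\[
	\widetilde{H}\big((z,s),t\big) := \big(z,\, s + t(1-s)\big).
\]
Since $s + t(1-s) = (1-t)s + t$ is a convex combination of $s$ and $1$, the value stays in $[0,1]$, so this is well-defined. At $t = 0$ it is the identity of $Z \times [0,1]$, and at $t = 1$ it sends every $(z,s)$ into $Z \times \{1\}$. Composing with $q$ yields a continuous map $q \circ \widetilde{H} \colon (Z \times [0,1]) \times [0,1] \to \Cone(Z)$.

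Next I would check that $q \circ \widetilde{H}$ factors through the quotient, i.e. that it descends to a map $H$ on $\Cone(Z) \times [0,1]$. The only nontrivial identifications made by $q$ collapse $Z \times \{1\}$ to the apex $*$; since $\widetilde{H}\big((z,1),t\big) = (z,1)$ for all $z$ and $t$, any two points identified by $q \times \mathrm{id}_{[0,1]}$ have the same image under $q \circ \widetilde{H}$. Hence $H$ is well-defined as a map of sets.

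The step I expect to be the main (though standard) obstacle is continuity of $H$, which is a question about the quotient topology rather than a calculation. The descended map $H$ is continuous precisely because $q \times \mathrm{id}_{[0,1]}$ is itself a quotient map: this relies on the fact that $[0,1]$ is locally compact Hausdorff, so the product of the quotient map $q$ with the identity on $[0,1]$ again has the universal property of a quotient. Granting this, continuity of $q \circ \widetilde{H}$ forces continuity of $H$. Finally $H(\,\cdot\,,0) = \mathrm{id}_{\Cone(Z)}$ and $H(\,\cdot\,,1)$ is the constant map at $*$, so $H$ is a contraction and $\Cone(Z)$ is contractible.
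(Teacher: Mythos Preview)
Your proof is correct and is the standard argument; the paper itself does not give a proof but simply refers the reader to Theorem~1.11 in Rotman's \emph{An Introduction to Algebraic Topology}. Your explicit contraction (sliding along cone lines to the apex) together with the observation that $q \times \mathrm{id}_{[0,1]}$ is a quotient map because $[0,1]$ is locally compact Hausdorff is exactly the content of that reference, so there is nothing to add.
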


For a proof, see for example Theorem~1.11 in Rotman's \cite{ROT88}.

\begin{Lemma}\label{lem:deformation}

	Let $Z$ be a contractible space. Then $\Cone(Z)$ deformation retracts onto $Z$.

\end{Lemma}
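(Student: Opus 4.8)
The plan is to build an explicit strong deformation retraction of $\Cone(Z)$ onto the base copy of $Z$, and to use the contractibility of $Z$ exactly at the one place where a naive ``push straight down to the base'' breaks, namely the cone point. Write points of $\Cone(Z) = \big(Z \times [0,1]\big)/\big(Z \times \{1\}\big)$ as classes $[z,t]$, so that $[z,1]$ is the cone point $c$ for every $z \in Z$, and identify $Z$ with the base $Z \times \{0\} = \set{[z,0] | z \in Z}$. Since $Z$ is contractible, I would first fix a contraction $G \colon Z \times [0,1] \to Z$ with $G(z,0) = z$ for all $z$ and $G(z,1) = z_0$ for all $z$ and some fixed basepoint $z_0 \in Z$.

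Using $G$, I would then define $H \colon \Cone(Z) \times [0,1] \to \Cone(Z)$ in two stages by
\[
	H\big([z,t],s\big) =
	\begin{cases}
		\big[\, G(z,\,2s\,t),\ t \,\big] & s \in [0,\tfrac12],\\[1mm]
		\big[\, G(z,\,t),\ (2-2s)\,t \,\big] & s \in [\tfrac12,1].
	\end{cases}
\]
The first stage tilts the identity into the map $[z,t] \mapsto [G(z,t),t]$, which contracts the $Z$-coordinate more and more strongly as one approaches the top, and the second stage lowers the $t$-coordinate to $0$ while keeping that contracted $Z$-coordinate. The routine verifications are then: $H(\cdot,0)$ is the identity because $G(z,0)=z$; the two pieces agree at $s=\tfrac12$, both giving $[G(z,t),t]$; $H(\cdot,1)$ sends $[z,t]$ to $[G(z,t),0] \in Z \times \{0\}$, and restricts to the identity on the base, so it is a retraction onto $Z$; and the base is fixed throughout, since $H([z,0],s)=[G(z,0),0]=[z,0]$, making this a \emph{strong} deformation retraction.

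The step I expect to be the main obstacle — and the only point where contractibility is genuinely used — is continuity at the cone point. I would handle it by lifting $H$ to the continuous map $\tilde H \colon \big(Z \times [0,1]\big) \times [0,1] \to \Cone(Z)$ given by the same formulas composed with the quotient map $q \colon Z \times [0,1] \to \Cone(Z)$, and then checking that $\tilde H$ factors through $q \times \mathrm{id}_{[0,1]}$. This amounts to showing that $H([z,1],s)$ is independent of $z$: for $s \le \tfrac12$ it equals $[G(z,2s),1]=c$, and for $s \ge \tfrac12$ it equals $[G(z,1),\,2-2s] = [z_0,\,2-2s]$, which is independent of $z$ precisely because $G(z,1)=z_0$. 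Since $[0,1]$ is locally compact Hausdorff, $q \times \mathrm{id}_{[0,1]}$ is a quotient map, so $H$ is continuous, and continuity across $s=\tfrac12$ follows from the pasting lemma. The two-stage split is essential here: a single-formula attempt such as $[z,t]\mapsto[G(z,st),(1-s)t]$ fails at the cone point for $0<s<1$, where $G(z,s)$ still depends on $z$; one must let the contraction run to completion at the top before the lowering begins, which is exactly what forces the hypothesis that $Z$ be contractible (as the cone over $S^0$ shows, the statement is false otherwise).
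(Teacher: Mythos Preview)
Your proposal is correct and is essentially identical to the paper's proof: the paper uses the very same two-stage homotopy (with the roles of the letters $s$ and $t$ swapped), checking well-definedness at the cone point in the same way. If anything, your version is slightly more careful, since you justify continuity via the quotient map $q \times \mathrm{id}_{[0,1]}$ and verify that the base is fixed at every time, not just at the end.
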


Since no proof of this fact could be found in the literature, we give a short proof here.

\begin{proof}

	We denote a point of $\Cone(Z)$ as $[z,s]$ with $z \in Z, s \in [0,1]$ if it is the image of $(z,s) \in Z \times [0,1]$ under the quotient map $Z \times [0,1] \to \big(Z \times [0,1] \big) / \big(Z \times \{1\} \big) = \Cone(Z)$. In particular $[z_1,1] = [z_2,1] \in \Cone (Z)$ for all $z_1, z_2 \in Z$.

	Since $Z$ is contractible, there exists a deformation retraction $H' \colon Z \times [0,1] \to Z$ of $Z$ onto a point $z_0 \in Z$. Hence we have $H'(z,0) = z$ and $H'(z,1) = z_0$ for all $z \in Z$ with $H'$ continuous.
	
	Then, a deformation retraction of $\Cone (Z) = \big(Z \times [0,1] \big) / \big(Z \times \{1\} \big)$ onto $Z \cong Z \times \{0\}$ is given by
	\[
		H([z,s],t) =
		\begin{cases}
			[H'(z,2st),s] & \text{for } 0 \leq t \leq 1/2 \\
			[H'(z,s), s (2-2t)] & \text{for } 1/2 \leq t \leq 1. \\
		\end{cases}
	\]
	
	To see that the map is well-defined, we have to show that at the cone point, i.e. when $s=1$, the definition does not depend on $z$. For $0 \leq t \leq 1/2$ and $s=1$, the right side is $[H'(z,2t),1]$, which does not depend on $z$ as noted above. For $1/2 \leq t \leq 1$ and $s=1$, the right side is $[H'(z,1), 2-2t]= [z_0, 2-2t]$, which as well does not depend on $z$. Since at $t= 1/2$ we have $[H'(z,2st),s] = [H'(z,s), s] = [H'(z,s), s (2-2t)]$, the map is well-defined and continuous.
	
	What is left to show is that $H$ is a deformation retraction.
	
	For this, note that
	\begin{align*}
		H([z,s],0) &= [H'(z,0),s] = [z,s] \\
		H([z,s],1) &= [H'(z,s), 0] \in Z \times \{0\} \\
		H([z,0],1) &= [H'(z,0), 0] = [z,0].
	\end{align*}
		
\end{proof}

The following theorem was first proven by Whitehead in \cite{WHI49}.

\begin{Theorem}[Whitehead's Theorem, \cite{HAT02}, Theorem~4.5]\label{thm:Whitehead}

	If a map $f \colon X \to Y$ between connected CW complexes induces isomorphisms $f_* \colon \pi_n(X)\to \pi_n(Y)$ for all $n$, then $f$ is a homotopy equivalence.
\end{Theorem}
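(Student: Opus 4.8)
The plan is to carry out the classical reduction to a deformation retraction via the mapping cylinder, convert the hypothesis into the vanishing of relative homotopy groups, and then apply a cell-by-cell compression argument. Before starting, I would use cellular approximation to replace $f$ by a homotopic cellular map; since homotopic maps induce the same homomorphisms on all $\pi_n$ and since being a homotopy equivalence is invariant under homotopy of the map, this replacement is harmless, and it guarantees that the mapping cylinder below is a CW complex.

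First I would replace $f$ by the inclusion of $X$ into the mapping cylinder $M_f = \big( X \times [0,1] \sqcup Y \big)/{\sim}$, where $(x,1) \sim f(x)$. The projection $M_f \to Y$ is a deformation retraction, hence a homotopy equivalence, and $f$ factors as $X \hookrightarrow M_f \to Y$. Because the second map is already a homotopy equivalence, it suffices to show that the inclusion $X \hookrightarrow M_f$ is one. For a cellular $f$ the pair $(M_f, X)$ is a CW pair, so the problem is now reduced to a CW pair in which the inclusion induces isomorphisms on all homotopy groups. Feeding this into the long exact sequence of the pair $(M_f, X)$, and using that $X \hookrightarrow M_f$ induces the same maps on $\pi_n$ as $f$ (the two differ only by the homotopy equivalence $M_f \simeq Y$), exactness forces $\pi_n(M_f, X) = 0$ for every $n$.

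The heart of the argument, and the step I expect to be the main obstacle, is the Compression Lemma: for a CW pair $(M_f, X)$ with $\pi_n(M_f, X) = 0$ whenever $M_f \setminus X$ has an $n$-cell, every map of pairs $(M_f, X) \to (M_f, X)$ is homotopic rel $X$ to a map landing in $X$. I would prove this by induction on the skeleta of $M_f$, compressing the cells of $M_f \setminus X$ one dimension at a time. The vanishing of $\pi_n(M_f, X)$ is exactly what lets each $n$-cell, viewed through its characteristic map as an element of $(D^n, \partial D^n) \to (M_f, X)$, be homotoped rel boundary into $X$; the homotopy extension property then propagates these cell-wise homotopies to a global homotopy rel $X$, and compactness of each characteristic map, together with the fact that a homotopy defined on each cell glues to a continuous homotopy on the whole complex, handles the passage to infinitely many cells and to infinite dimension.

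Finally, I would apply the Compression Lemma to the identity map of the pair $(M_f, X)$. This yields a retraction $r \colon M_f \to X$ together with a homotopy rel $X$ from the identity of $M_f$ to $r$; such a homotopy is precisely a strong deformation retraction of $M_f$ onto $X$. Hence the inclusion $X \hookrightarrow M_f$ is a homotopy equivalence, and composing with the homotopy equivalence $M_f \simeq Y$ shows that $f$ itself is a homotopy equivalence, as claimed.
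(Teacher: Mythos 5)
Your proposal is correct, and it coincides with the proof the paper implicitly relies on: the thesis does not prove Whitehead's Theorem itself but cites Hatcher's Theorem~4.5, whose proof is exactly your argument — cellular approximation, factoring $f$ through the mapping cylinder $M_f$, extracting $\pi_n(M_f, X) = 0$ for all $n$ from the long exact sequence of the CW pair, and applying the Compression Lemma to the identity map of $(M_f, X)$ to produce a deformation retraction of $M_f$ onto $X$. Nothing is missing; the one hypothesis you use silently, that the relative groups vanish for every choice of basepoint, is supplied by the stated connectedness of $X$ and $Y$.
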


For a proof and further reading on Whitehead's Theorem see for example Chapter~4.1 in Hatcher's \cite{HAT02} or Corollary~11.14 of Chapter~VII in Bredon's \cite{BRE93}.

\begin{Lemma}[\cite{MAY99}, Chapter~9.4]\label{lem:may}
	If $Z$ is the colimit of a sequence of inclusions $Z_i \to Z_{i+1}$ of based spaces, then the natural map $\colim_i \pi_n(Z_i) \to \pi_n(Z)$ is an isomorphism for each $n$.
\end{Lemma}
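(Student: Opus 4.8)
The plan is to reduce everything to a single topological fact: since $Z=\colim_i Z_i$ carries the weak (final) topology with respect to the inclusions, every compact subset of $Z$ already lies in some stage $Z_i$. Granting this, both directions of the isomorphism follow from routine compactness arguments, since the spheres and cylinders that represent homotopy classes and homotopies are compact.

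First I would observe that the natural map exists for formal reasons: each inclusion $Z_i\hookrightarrow Z$ induces a homomorphism $\pi_n(Z_i)\to\pi_n(Z)$, compatibly with the structure maps $\pi_n(Z_i)\to\pi_n(Z_{i+1})$, so the universal property of the colimit produces the map $\colim_i\pi_n(Z_i)\to\pi_n(Z)$. For surjectivity, I would take a based map $f\colon S^n\to Z$ representing a class in $\pi_n(Z)$; its image is compact, hence contained in some $Z_i$, so $f$ factors as $S^n\to Z_i\hookrightarrow Z$ and its class is hit from $\pi_n(Z_i)$. For injectivity, I would suppose a class represented by $[g]\in\pi_n(Z_i)$ dies in $\pi_n(Z)$; a null-homotopy $H\colon S^n\times[0,1]\to Z$ rel basepoint then has compact domain and therefore factors through some $Z_j$ with $j\ge i$, so $[g]$ already vanishes in $\pi_n(Z_j)$ and represents zero in the colimit.

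The main obstacle, and the only real content, is the compactness fact itself. In the application the $Z_i$ are geometric realizations of subcomplexes, so each $Z_i$ is closed in $Z$, the inclusions are CW inclusions, and $Z$ has the weak topology. I would argue by contradiction: if a compact $K\subseteq Z$ met infinitely many of the differences $Z_{i+1}\setminus Z_i$, choosing one point $p_i$ in each would yield an infinite set $\{p_i\}$ that is closed and discrete in $Z$, since any of its subsets meets each $Z_j$ in a finite, hence closed, set and is therefore closed in the weak topology. But an infinite closed discrete subset of a compact space cannot exist, a contradiction, so $K$ lies in a single $Z_i$. Since the result is classical and is stated here only for reference, I would ultimately cite \cite{MAY99}, Chapter~9.4 rather than reproduce the full argument.
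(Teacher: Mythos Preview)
The paper does not actually prove this lemma: it simply refers the reader to May~\cite{MAY99}, Chapter~9.4. Your proposal therefore goes further than the paper does, and the argument you sketch is exactly the standard one (and essentially the one May gives): compactness of $S^n$ and $S^n\times[0,1]$ forces representatives and homotopies to factor through a finite stage, once one knows that compact subsets of $Z$ lie in some $Z_i$.

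One small caveat worth flagging: the closed-discrete-subset argument you use for the compactness fact needs finite subsets of each $Z_j$ to be closed, i.e.\ a $T_1$ hypothesis (or that the inclusions are closed). The lemma as stated speaks only of ``based spaces'', and without such a hypothesis the argument can fail. You already address this by noting that in the application the $Z_i$ are CW subcomplexes, where everything is fine; since the paper only cites the result anyway, this is entirely adequate.
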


For a proof of this, see May \cite{MAY99}, Chapter~9.4.

\begin{Theorem}[\cite{BRE93}, Chapter~VII, Theorem~6.7]\label{thm:pi_two}

	If $p\colon Y \to B$ is a fibration and if $y_0 \in Y, b_0= p (y_0)$, and $F=p^{-1} (b_0)$, then taking $y_0$ as the base point of $Y$ and of $F$ and $b_0$ as the base point of $B$, we have the exact sequence:
	
	\begin{align*}
		&\cdots \longrightarrow \pi_n(F) \stackrel{i_{\#}}{\longrightarrow} \pi_n(Y) \stackrel{p_{\#}}{\longrightarrow} \pi_n(B) \stackrel{\partial_{\#}}{\longrightarrow} \pi_{n-1}(F) \longrightarrow \cdots\\
		&\cdots \longrightarrow \pi_1(Y) \longrightarrow \pi_1(B) \longrightarrow \pi_0(F) \longrightarrow \pi_0(Y) \longrightarrow \pi_0(B).
	\end{align*}

\end{Theorem}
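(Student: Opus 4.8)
The plan is to obtain this long exact sequence as a formal consequence of the long exact sequence of the pair $(Y,F)$, feeding in one genuinely topological input: that the fibration $p$ identifies the relative homotopy groups of $(Y,F)$ with the homotopy groups of the base $B$.

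First I would recall the long exact sequence of the based pair $(Y,F,y_0)$, which holds for \emph{any} pair and requires no fibration hypothesis:
\[
	\cdots \longrightarrow \pi_n(F,y_0) \xrightarrow{i_\#} \pi_n(Y,y_0) \xrightarrow{j_\#} \pi_n(Y,F,y_0) \xrightarrow{\partial} \pi_{n-1}(F,y_0) \longrightarrow \cdots
\]
terminating in $\pi_0(F) \to \pi_0(Y)$. Here $i_\#$ and $j_\#$ are induced by the inclusions $F \hookrightarrow Y$ and $(Y,y_0) \hookrightarrow (Y,F)$, and $\partial$ sends a representative $(I^n, \partial I^n, J^{n-1}) \to (Y,F,y_0)$ to its restriction to the face $I^{n-1}$, where $J^{n-1} = \overline{\partial I^n \setminus I^{n-1}}$ denotes the union of the remaining faces. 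Exactness of this sequence (as groups for $n \geq 2$ and as pointed sets in the tail) is standard.

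The heart of the argument is the claim that $p$ induces an isomorphism
\[
	p_\# \colon \pi_n(Y,F,y_0) \xrightarrow{\ \cong\ } \pi_n(B,b_0) \qquad \text{for all } n \geq 1,
\]
and this is the only step that uses the homotopy lifting property. For surjectivity I would represent a class of $\pi_n(B,b_0)$ by $f \colon (I^n, \partial I^n) \to (B,b_0)$; since $f$ is constant on $J^{n-1}$, this restriction is covered by the constant lift at $y_0$, and because the inclusion $J^{n-1} \hookrightarrow I^n$ is homeomorphic to $I^{n-1} \times \{0\} \hookrightarrow I^{n-1} \times I$, the lifting property extends it to a lift $g \colon I^n \to Y$ of $f$. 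As $f(\partial I^n) = b_0$, the face $I^{n-1}$ lands in $F = p^{-1}(b_0)$, so $g$ is a map of triples with $p_\#[g] = [f]$. Injectivity follows by the same device applied to a homotopy: a nullhomotopy of $p \circ g$ through maps of pairs lifts to a nullhomotopy of $g$.

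Finally I would substitute this isomorphism into the sequence of the pair. Replacing each $\pi_n(Y,F)$ by $\pi_n(B)$ turns $j_\#$ into $p_\#\colon \pi_n(Y) \to \pi_n(B)$ and turns $\partial$ into the connecting map $\partial_\#\colon \pi_n(B) \to \pi_{n-1}(F)$; exactness is inherited verbatim from the pair sequence, and the low-dimensional tail involving $\pi_1$ and $\pi_0$ transfers in the same way once \emph{exact} is read as ``image equals the preimage of the basepoint''. The main obstacle is precisely the isomorphism $\pi_n(Y,F) \cong \pi_n(B)$: the delicate point is setting up the lifts against the decomposition $\partial I^n = I^{n-1} \cup J^{n-1}$ and checking that the constructions are independent of the chosen representatives and lifts. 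Everything outside this lemma is formal naturality.
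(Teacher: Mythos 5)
The paper itself offers no proof of this statement—it is quoted as a known result from Bredon, with the proof deferred to the cited source—and your argument is precisely the standard proof given there and in Hatcher (Theorem~4.41): the long exact sequence of the pair $(Y,F)$ combined with the lifting-based identification $p_\#\colon \pi_n(Y,F,y_0) \to \pi_n(B,b_0)$. Your treatment of the key isomorphism is sound: surjectivity via the homeomorphism of pairs $(I^n, J^{n-1}) \cong (I^{n-1}\times I,\, I^{n-1}\times\{0\})$, injectivity by lifting a homotopy relative to the already-lifted part of the boundary.

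Two points need repair, one of them a genuine (if small) gap. First, your claim that the relative isomorphism is ``the only step that uses the homotopy lifting property'' fails at the end of the sequence: the pair sequence terminates with $\pi_1(Y,F)\to\pi_0(F)\to\pi_0(Y)$, so it supplies exactness at $\pi_0(F)$ but says nothing about the segment $\pi_0(F)\to\pi_0(Y)\to\pi_0(B)$ appearing in the statement. Exactness at $\pi_0(Y)$ requires a separate use of the fibration hypothesis: given $y\in Y$ with $p(y)$ in the path component of $b_0$, lift a path from $p(y)$ to $b_0$ starting at $y$ (homotopy lifting applied to a point); its endpoint lies in $F=p^{-1}(b_0)$, so $[y]$ is in the image of $\pi_0(F)$, while the reverse containment is immediate. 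Without this, the last arrow of the asserted sequence is unaccounted for. Second, a matter of precision rather than substance: for $n=1$ the object $\pi_1(Y,F,y_0)$ is only a pointed set, not a group, so the claimed ``isomorphism for all $n\geq 1$'' should be stated as a basepoint-preserving bijection in that degree; this still transports exactness read as ``image equals preimage of the basepoint,'' which is exactly how the tail of the theorem is to be understood, but as written your claim overstates the structure present. With these two emendations your proof is complete and coincides with the argument in the source the paper cites.
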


After the theorem Bredon notes that ``a covering map is clearly a fibration. In that case the fiber $F$ is discrete and so $\pi_n(F)= 0$ for $n \geq 1$. Thus the exact sequence implies that $p_{\#} \colon \pi_n(Y)\to \pi_n(B)$ is an isomorphism for $n \geq 2$'' \cite{BRE93}.

\end{appendix}

\end{document}